\documentclass[a4paper,10pt,reqno]{amsart}
\usepackage[foot]{amsaddr}
\usepackage{amsmath}
\usepackage{amsthm,enumerate, esint}
\usepackage{mathtools}
\usepackage{hyperref,url}
\usepackage{graphicx}
\usepackage{amssymb}

\usepackage{appendix}

\usepackage[english]{babel}

\selectlanguage{english}

\usepackage{fancyhdr}

\usepackage{calc}
\usepackage{url}

\usepackage[text={6in,8.6in},centering]{geometry}

\usepackage{srcltx}

\usepackage[T1]{fontenc}

\usepackage[usenames,dvipsnames]{color}

\fancyhf{}

\theoremstyle{plain}
\newtheorem{theorem}{Theorem}[section]
\theoremstyle{plain}
\newtheorem{lemma}{Lemma}[section]
\newtheorem{proposition}{Proposition}[section]

\newtheorem{definition}{Definition}[section]
\newtheorem{remark}{Remark}[section]

\renewcommand{\(}{\left(}
\renewcommand{\)}{\right)}
\renewcommand{\[}{\left[}
\renewcommand{\]}{\right]}
\newcommand{\eps}{\varepsilon}


\newcommand{\To}{\longrightarrow}
\newcommand{\be} {\begin{equation}}
\newcommand{\ee} {\end{equation}}
\newcommand{\bea} {\begin{eqnarray}}
\newcommand{\eea} {\end{eqnarray}}
\newcommand{\Bea} {\begin{eqnarray*}}
	\newcommand{\Eea} {\end{eqnarray*}}

\newcommand{\ba} {\beta}

\newcommand{\ga} {\gamma}
\newcommand{\Ga} {\Gamma}
\newcommand{\Om} {\Omega}

\newcommand{\De} {\Delta}
\newcommand{\la} {\lambda}

\newcommand{\Si} {\Sigma}

\newcommand{\no} {\nonumber}

\newcommand{\lab} {\label}
\newcommand{\va} {\varphi}
\newcommand{\var} {\varepsilon}

\newcommand{\R}{\mathbb R}
\newcommand{\N}{\mathbb N}
\newcommand{\Rn}{\mathbb R^N}

\newcommand{\deb}{\rightharpoonup}
\newcommand{\Hs}{\dot{H}^s(\mathbb{R}^{N})}
\newcommand{\IKtf}{\bar I_{K,t,f}^{\gamma}}
\newcommand{\IKto}{\bar I_{K,t,0}^{\gamma}}
\newcommand{\dI}{(\bar I_{K,t,f}^{\gamma}){'}}
\newcommand{\dIo}{(\bar I_{K,t,0}^{\gamma})'}

\newcommand{\hms}{(\dot{H}^{s})'}
\newcommand{\w}{w_{\ga,t}^{\tau}}
\newcommand{\x}{\bar{x}_0}
\catcode`\@=11

\newcommand{\authorfootnotes}{\renewcommand\thefootnote{\@fnsymbol\c@footnote}}%
\allowdisplaybreaks


\def\N{{I\!\!N}}

\numberwithin{equation}{section} \allowdisplaybreaks

\begin{document}
\title[Fractional Hardy-Sobolev equations]{Fractional Hardy-Sobolev equations\\ with nonhomogeneous terms}

\date{}

\author[Mousomi Bhakta]{Mousomi Bhakta\textsuperscript{1}}
\address{\textsuperscript{1}Department of Mathematics, Indian Institute of Science Education and Research, Dr. Homi Bhaba Road, Pune-411008, India}
\email{mousomi@iiserpune.ac.in}

\author[Souptik Chakraborty]{Souptik Chakraborty\textsuperscript{1}}
\address{\textsuperscript{1}Department of Mathematics, Indian Institute of Science Education and Research, Dr. Homi Bhaba Road, Pune-411008, India}
\email{souptik.chakraborty@students.iiserpune.ac.in}

\author[Patrizia Pucci]{Patrizia Pucci\textsuperscript{2}}
\address{\textsuperscript{2}Dipartimento di Matematica e Informatica, Universit\`a degli Studi di Perugia --
Via Vanvitelli 1, I-06123 Perugia, Italy}
\email{patrizia.pucci@unipg.it}

\keywords{Nonlocal equations,  fractional Laplacian, Hardy-Sobolev equations,  profile decomposition, Palais-Smale decomposition, energy estimate, positive solutions, min-max method.}

\maketitle

\begin{abstract}
This paper deals with existence and multiplicity of positive solutions to the following class of nonlocal equations with critical nonlinearity:
\begin{equation*}\begin{cases}
(-\Delta)^s u -\gamma\dfrac{u}{|x|^{2s}}=K(x)\dfrac{|u|^{2^*_s(t)-2}u}{|x|^t}+f(x) \quad\mbox{in}\quad\Rn,\\
 \qquad\qquad\qquad\quad u\in \dot{H}^s(\Rn),
\end{cases}
\end{equation*}
where $N>2s$, $s\in(0,1)$, $0\leq t<2s<N$ and $2^*_s(t):=\frac{2(N-t)}{N-2s}$. Here $0 <\gamma<\gamma_{N,s}$ and $\gamma_{N,s}$ is the best Hardy constant in the fractional Hardy inequality. The coefficient $K$ is a positive
continuous function on $\mathbb{R}^{N}$, with $K(0)=1=\lim_{|x|\to\infty}K(x)$. The perturbation $f$ is a nonnegative nontrivial functional in the dual space $\dot{H}^s(\Rn)$
of $\dot{H}^s(\Rn)$ i.e., $\prescript{}{(\dot{H}^{s})'}{\langle}f,u{\rangle}_{\dot{H}^s}\geq 0$, whenever $u$ is a nonnegative function  in $\dot{H}^s(\Rn)$. We establish the profile decomposition of the Palais-Smale sequence associated with the functional. Further, if $K\geq 1$ and $\|f\|_{(\dot{H}^s)'}$ is small enough (but $f\not\equiv 0$), we establish existence of at least two positive solutions to the above equation.

\medskip

\noindent
\emph{\bf 2010 MSC:} 35R11,  35A15, 35B33, 35J60
\end{abstract}

\section{introduction}
The paper deals with the following fractional Hardy-Sobolev equation with nonhomogeneous term
\begin{equation}\tag{$E^{\ga}_{K, t, f}$}
\label{P}
(-\Delta)^s u -\gamma\frac{u}{|x|^{2s}}=K(x)\frac{|u|^{2^*_s(t)-2}u}{|x|^t}+f(x) \quad\mbox{in}\quad\Rn,\quad
 u\in \dot{H}^s(\Rn),
\end{equation}
where $N>2s$, $s\in(0,1)$, $0\leq t<2s<N$ and $2^*_s(t):=\frac{2(N-t)}{N-2s}$.   Clearly, $2<2^*_s(t)\leq \frac{2N}{N-2s}=2^*_s$.  Here $0 <\gamma<\gamma_{N,s}$, where $\gamma_{N,s}$ is the best Hardy constant in the fractional Hardy inequality
$$\gamma_{N,s} \int_{\Rn} \frac{|u(x)|^2}{|x|^{2s}} \,{\rm d}x \leq \int_{\Rn}|\xi|^{2s}|\mathcal{F}(u)(\xi)|^2 \,{\rm d}\xi,\quad \gamma_{N,s}=2^{2s}\frac{\Gamma^2(\frac{N+2s}{4})}{\Gamma^2(\frac{N-2s}{4})}.$$ Throughout the paper $\mathcal{F}(u)$ denotes the Fourier transform of $u$.  Moreover, $$\lim_{s\to 1}\gamma_{N,s}:= \bigg(\frac{N-2}{2}\bigg)^2,$$
which is exactly the  best Hardy constant in the classical case $s=1$.
The symbol $(-\De)^s$ denotes the  fractional Laplace operator which can be defined for any function $u$ of the Schwartz class functions $\mathcal{S}(\Rn)$  as follows:
\begin{equation*}
  \left(-\Delta\right)^su(x): = C_{N,s} \mbox{P.V.} \int_{\Rn}\frac{u(x)-u(y)}{|x-y|^{N+2s}} \, dy, \quad C_{N,s}= \frac{4^{s}\Ga(N/2+ s)}{\pi^{N/2}|\Ga(-s)|}.
\end{equation*}
For the sharp Hardy inequalities in general fractional Sobolev spaces
$W^{s,p}(\Rn)$, $1<p<\infty$,
as well as for historical comments in the case $p=2$, we refer the interested reader to~\cite{FS} and the references therein. While for fractional Hardy-Sobolev-Maz'ya inequality, we mention the recent contribution \cite{Ma} and for fractional Hardy inequality in Heisenberg group we refer to \cite{AM}.
Throughout the paper the homogeneous fractional Sobolev space is denoted by
$$\dot{H}^s(R^{N}): =\bigg\{u\in L^{2^*_s}(\R^N) \; : \; \iint_{\mathbb{R}^{2N}}\frac{|u(x)-u(y)|^2}{|x-y|^{N+2s}}\,{\rm d}x\,{\rm d}y<\infty\bigg\},$$
and it is endowed 	with the inner product
$\langle\cdot,\cdot\rangle_{\dot{H}^s}$ and corresponding Gagliardo norm
	$$\|u\|_{\dot{H}^{s}(\R{^N})}:=\left(\frac{C_{N,s}}{2} \iint_{\mathbb{R}^{2N}} \frac{|u(x)-u(y)|^2}{|x-y|^{N+2s}}\,{\rm d}x\,{\rm d}y\right)^{1/2}=\bigg( \int_{\Rn}|\xi|^{2s}|\mathcal{F}(u)(\xi)|^2 \, {\rm d}\xi\bigg)^\frac{1}{2}.$$
In literature there are several definitions of the fractional Laplacian in which different
normalizing constants $C_{N,s}$ appear. The constant $C_{N,s}$ is chosen so that the above definition  is equivalent with the one via the Fourier transform,
which is called {\em classical}.
The definition via Fourier transform recovers the standard
Laplacian
as $s\to1$,  which however cannot be represented by other nonlocal formulas.
	
	In \eqref{P}, the functions $K$ and $f$ satisfy the properties.
\begin{enumerate}
\item[${\bf (K)}$] {\it $0<K\in C(\Rn)$, $K(0)=1=\lim_{|x|\to\infty}K(x)$. }
\end{enumerate}
\begin{enumerate}
\item[${\bf (F)}$] {\it $f\not\equiv 0$ is a nonnegative functional in the dual space $\dot{H}^s(\Rn)'$ of $\dot{H}^s(\Rn)$}, i.e. whenever $u$ is a nonnegative function  in $\dot{H}^s(\Rn)$ then $\prescript{}{(\dot{H}^{s})'}{\langle}f,u{\rangle}_{\dot{H}^s}\geq 0$.
\end{enumerate}
Using the Hardy inequality, it is easy to see that the operator $L_{\ga, s}:=(-\De)^s-\frac{\ga}{|x|^{2s}}$ with $0\leq \ga<\ga_{N,s}$ is a positive operator. The request $\ga<\ga_{N,s}$ is fairly natural since we are looking for positive solutions.
In this case the Hardy-Sobolev inequality holds for $L_{\ga, s}$, which states that if $0\leq t<2s<N$, then
\be\lab{S_gats}
S_{\ga, t, s}=S_{\ga, t, s}(\Rn):=\inf_{u\in\Hs\setminus\{0\}}\frac{\tfrac{C_{N,s}}{2}\displaystyle\iint_{\R^{2N}}\frac{|u(x) - u(y)|^2}{|x-y|^{N+2s}}\,{\rm d}x\,{\rm d}y -\gamma\int_{\R^N} \frac{|u|^2}{|x|^{2s}} \, {\rm d}x}{\bigg(\displaystyle\int_{\Rn}\frac{|u|^{2^*_s(t)}}{|x|^t}\,{\rm d}x\bigg)^\frac{2}{2^*_s(t)}}
\ee
is finite, strictly positive and achieved (see \cite{GRSZ,GS}). Observe that thanks to \cite{GRSZ}, any minimizer for \eqref{S_gats} leads (up to a constant) to a nonnegative variational solution of the
\begin{equation}\tag{$E^{\ga}_{1,t,0}$}\label{Q}
(-\Delta)^s u -\gamma\frac{u}{|x|^{2s}}=\frac{|u|^{2^*_s(t)-2}u}{|x|^t},\quad u\in \dot{H}^s(\Rn).
\end{equation}
If $\ga=0=t$, then $S_{\ga, t, s}$ reduces to the best Sobolev constant $S_{0,0,s}=S$ which is known to be achieved by $C_{N,s}(1+|x|^2)^{-\frac{N-2s}{2}}$ and any minimizer of $S$ leads (up to a constant) to a nonnegative solution of equation $(E^{0}_{1,0,0})$ i.e., \eqref{Q} with $\ga=0=t$.

\begin{definition}
{\rm	We say $u \in \Hs$ is a {\em positive weak solution of} \eqref{P} if $u>0$ in $\Rn$ and for every $\phi\in\Hs$, we have
\begin{align*}
	\frac{C_{N,s}}{2}\iint_{\R^{2N}}\frac{(u(x) - u(y))(\phi(x)-\phi(y))}{|x-y|^{N+2s}} {\rm d}x\,{\rm d}y &-\gamma\int_{\R^N} \frac{u\phi}{|x|^{2s}}   {\rm d}x\\
&= \int_{\R{^N}}K(x)\frac{|u|^{2_s^*(t)-2}u\phi}{|x|^t}   {\rm d}x+ \prescript{}{\hms}{\langle}f,\phi{\rangle}_{\dot{H}^s},
\end{align*}
	where $\prescript{}{\hms}{\langle}.,.{\rangle}_{H^s}$ denotes the duality bracket between $\Hs$ and its dual $\Hs'$.}
\end{definition}
\medskip

\begin{remark}\lab{r:norm}
{\rm	For $0<\ga<\ga_{N,s},$
	\be
\|u\|_{\ga}:=\bigg(\frac{C_{N,s}}{2}\iint_{\R^{2N}}\frac{|u(x)-u(y)|^2}{|x-y|^{N+2s}}\;{\rm d}x{\rm d}y-\ga\int_{\Rn}\frac{|u|^2}{|x|^{2s}}\;{\rm d}x\bigg)^{\tfrac{1}{2}}\no
	\ee
	defines a norm in $\Hs$ which is equivalent to the standard norm in $\Hs$. In particular,
	$$\sqrt{1-\frac{\ga}{\ga_{N,s}}}\|u\|_{\dot{H}^s}\le \|u\|_{\ga}
\le\|u\|_{\dot{H}^s}.$$
The corresponding equivalent inner product $\langle \cdot,\cdot\rangle_{\ga}$ in the fractional homogeneous Hilbert space $\Hs$ is given by
	\be
	\langle u,v\rangle_{\ga}:=\frac{C_{N,s}}{2}\iint_{\R^{2N}}\frac{\big(u(x)-u(y)\big)\big(v(x)-v(y)\big)}{|x-y|^{N+2s}}\;{\rm d}x{\rm d}y-\ga\int_{\Rn}\frac{uv}{|x|^{2s}}\;{\rm d}x.\no
	\ee
Finally, for simplicity we endow in what follows the weighted Lebesgue space $L^{2^*_s(t)}(\Rn, |x|^{-t})$
with the norm $\|u\|_{L^{2^*_s(t)}(\Rn, |x|^{-t}))}=\left(
\int_{\Rn}\frac{|u|^{2_s^*(t)}}{|x|^t}\;{\rm d}x\right)^{1/2_s^*(t)}$.}
\end{remark}

We are going to prove existence and multiplicity of positive solutions of \eqref{P} in the spirit of \cite{BCG,BP}.
Under the conditions on $K$ and $f$ stated above, equation
\eqref{P} can be regarded as a perturbation problem of the homogeneous equation
\eqref{Q}. It is known from \cite{GS} that when $0<\ga<\ga_{N,s}$ or $\{\ga=0 \mbox{ and } 0<t<2s\}$, then any nonnegative minimizer for $S_{\ga, t, s}$ is positive, radially symmetric, radially decreasing, and approaches zero as $|x|\to\infty$. The main question to be addressed is whether positive solution can survive after a perturbation of type \eqref{P} or not.

For $\ga=0=t$, this kind of question was recently studied by the first and third author of the current paper in \cite{BP}. For Schr\"{o}dinger operator (without Hardy term), same type of questions were addressed in \cite{BCG}. However for $\ga\neq 0$ the presence of the Hardy potential requires a new argument
to dealt with. One of the key steps to prove the multiplicity result is a careful analysis of the Palais-Smale level. Theorem~\ref{th:PS} studies the profile decomposition of any Palais-Smale sequence possessed by the
underlying functional associated to~\eqref{P}. We show that concentration takes place along a single profile when
$t>0$, while concentration takes place along two different profiles when $t=0$. In the local case $s=1$, $t=0$ and $f=0$ Smets deals with the profile decomposition  in~\cite{Sm}. In bounded domains and again in the local case $s=1$,
paper~\cite{BS} treats the case of  all $t\geq 0$. However, extension of the latter results in the nonlocal case $s\in (0,1)$ and in the entire space $\Rn$ is highly nontrivial and requires several delicate estimates and techniques to deal with.
\medskip


In local case $s=1$, we refer \cite{FP, Sm}, where authors have studied the local version of $(E^\ga_{K,0,0})$ in $\Rn$. In the nonlocal case, when the domain is a bounded subset of $\Rn$, existence of positive solutions of $\eqref{P}$ in $\Om$  with $\ga=0=t$ (i.e., without Hardy and Hardy-Sobolev terms) and Dirichlet boundary condition has been proved in \cite{SZY}. Existence of sign changing solutions of
$$(-\De)^su=|u|^\frac{4s}{N-2s}u+\eps f \mbox{ in }\ \Omega, \quad u=0\ \mbox{ in }\, \Rn\setminus\Om,$$  where $f\geq 0, f\in L^\infty(\Om)$ has been studied in \cite{AT} and existence of two positive solutions have been established in \cite{WZ} when $f$ is a continuous function with compact support in $\Om$. In the nonlocal case, when the domain is the entire space $\Rn$, but $\ga=0$, we refer 
to~\cite{BCG, BP}, where multiplicity of positive solutions have been studied in presence of a nonhomogeneous term. 

There is a wide literature regarding problems involving the fractional Hardy potential.  Avoiding to disclose the discussion we refer to the following (far from being complete) list of works and references therein \cite{AADP, AMPP, BBGM, GGJP, DMPS, FLS, GS}. In \cite{DMPS} Dipierro, et al. study the equation $(E^\ga_{1,0,0})$ (i.e., \eqref{Q} with $t=0$)
and prove existence of a ground state solution, qualitative properties of positive solutions and asymptotic behavior of solutions  at
both $0$ and  infinity. In \cite{BBGM}, the authors deal with the Green function for $L_{\ga,s}$ ($0<\ga<\ga_{N,s}$) and show when the integral representation of the weak solution is valid.

It is worth noting that solutions of \eqref{Q} do not belong to $L^\infty(\Rn)$ as soon as $\ga>0$, because of the singularity at zero. In fact solutions blow up at origin (see \cite{DMPS, GS}). For this reason, it seems more difficult to handle \eqref{P} in the general case using the fine analysis of blow up technique quoted above.
\medskip

To the best of our knowledge, so far there has been no papers in the literature, where existence and multiplicity of positive solutions of Hardy-Sobolev type equations (with $\ga\neq 0$ and $t\geq 0$) in $\Rn$, have  been established in the nonhomogeneous case $f\neq 0$. Also the profile decomposition in the nonlocal case with the Hardy term is completely new and the proof is very involved, delicate and complicated compared with the local case $s=1$. The proofs are not at all an easy adoption of the local case or the case $\ga=0$. The multiplicity results in this paper is new even in the local case $s=1$, but we leave the obvious changes, when $s=1$, to the interested reader.
\vspace{2mm}

\noindent Below we state the main result.

\begin{theorem}\lab{th:ex-f}
Assume that ${\bf (F)}$ and ${\bf (K)}$ are satisfied, with
$K\geq 1$ in $\mathbb R^N$.
If
$$ \|f\|_{\hms}< C_t\sqrt{1-\frac{\ga}{\ga_{N,s}}}S_{\ga,t,s}^{\tfrac{N-t}{4s-2t}}, \quad\mbox{where}\quad C_t=\bigg(\frac{4s-2t}{N-2t+2s}\bigg)\bigg((2_s^*(t)-1)\|K\|_{L^{\infty}}\bigg)^{-\big(\frac{N-2s}{4s-2t}\big)},$$
then

(i) For $t>0$, equation \eqref{P} admits two positive solutions;

(ii) For $t=0$, equation \eqref{P} admits a positive solution. In addition, if
$\|K\|_{L^\infty}<\big(\frac{S}{S_{\ga,0,s}}\big)^\frac{N}{N-2s}$
then \eqref{P} admits two positive solutions.
\end{theorem}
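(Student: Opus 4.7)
The plan is to run the classical two-solution variational scheme for a subcritical perturbation of a critical problem. I would work with the energy functional
\begin{equation*}
\IKtf(u) = \tfrac{1}{2}\|u\|_\gamma^2 - \tfrac{1}{2_s^*(t)}\int_{\Rn} K(x)\tfrac{|u_+|^{2_s^*(t)}}{|x|^t}\,dx - \prescript{}{\hms}{\langle}f,u\rangle_{\Hs},
\end{equation*}
on $\Hs$, where truncating the nonlinearity to $u_+^{2_s^*(t)-1}$ and the sign of $f$ ensure, via the strong maximum principle for $L_{\gamma,s}$, that every nontrivial critical point of $\IKtf$ is a positive solution of \eqref{P}.

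First I would produce a local minimum near $0$. Combining \eqref{S_gats} with Remark~\ref{r:norm} gives
\begin{equation*}
\IKtf(u) \geq \tfrac{1}{2}\|u\|_\gamma^2 - \tfrac{\|K\|_\infty}{2_s^*(t)}\bigl(1-\tfrac{\gamma}{\gamma_{N,s}}\bigr)^{-2_s^*(t)/2}S_{\gamma,t,s}^{-2_s^*(t)/2}\|u\|_\gamma^{2_s^*(t)} - \|f\|_{\hms}\|u\|_\gamma,
\end{equation*}
and the explicit constant $C_t$ in the hypothesis is tailored so that this polynomial attains a strict positive maximum on some sphere $\|u\|_\gamma = r_0$. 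Since $f \not\equiv 0$, testing against $\tau v$ with $\langle f,v\rangle > 0$ for small $\tau>0$ yields $c_0 := \inf_{\overline B_{r_0}} \IKtf < 0$. Ekeland's variational principle provides a bounded Palais-Smale sequence at level $c_0$; Theorem~\ref{th:PS} bounds every lost-bubble energy from below by a strictly positive constant, so strong convergence holds and the weak limit $u_0$ is a positive solution.

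For the second solution I would use the mountain-pass theorem along the path $\tau \mapsto u_0 + \tau U_\varepsilon$, where $U_\varepsilon(x) := \varepsilon^{-(N-2s)/2}U_{\gamma,t,s}(x/\varepsilon)$ is a rescaled extremal of \eqref{S_gats} concentrating at the origin, taking advantage of $K(0) = 1$. This yields a Palais-Smale sequence at
\begin{equation*}
c_1 := \inf_{\eta \in \Gamma}\max_{\tau \in [0,1]}\IKtf(\eta(\tau)) \geq \rho > 0 > c_0,
\end{equation*}
with $\Gamma$ the usual set of paths joining $0$ to a point where $\IKtf < c_0$. The profile decomposition again enforces strong convergence provided $c_1 < c_0 + c^*$, where $c^*$ is the smallest bubble energy allowed by Theorem~\ref{th:PS}: namely $c^* = \tfrac{2s-t}{2(N-t)}S_{\gamma,t,s}^{(N-t)/(2s-t)}$ for $t>0$ (single profile at the origin), and for $t=0$ the minimum of $\tfrac{s}{N}S_{\gamma,0,s}^{N/(2s)}$ (Hardy-bubble at $0$) and $\tfrac{s}{N}S^{N/(2s)}\|K\|_\infty^{-(N-2s)/(2s)}$ (Sobolev bubble elsewhere). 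The extra hypothesis $\|K\|_\infty < (S/S_{\gamma,0,s})^{N/(N-2s)}$ in the case $t=0$ is precisely what makes the Hardy-bubble threshold the binding one, which is exactly the threshold beaten by testing with $U_\varepsilon$.

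The principal obstacle is the energy estimate $\sup_{\tau\geq 0}\IKtf(u_0 + \tau U_\varepsilon) < c_0 + c^*$ in the nonlocal Hardy setting. Unlike the Aubin-Talenti Sobolev bubble, $U_{\gamma,t,s}$ has singular behavior at $0$ and anomalous polynomial decay at infinity driven by the explicit exponents from $L_{\gamma,s}$-harmonic analysis, so the cross terms between $u_0$ and $U_\varepsilon$ in both the Gagliardo seminorm and the Hardy-weighted integrals require careful $\varepsilon$-bookkeeping, together with an interpolation bound on $\int K(x)|u_0|^{2_s^*(t)-1}U_\varepsilon/|x|^t$. Once this strict gap is secured, Theorem~\ref{th:PS} delivers a second positive solution $u_1 \neq u_0$ at the strictly higher level $c_1$.
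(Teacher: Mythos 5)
Your high-level scaffolding matches the paper: a local minimum plus a mountain pass, Theorem~\ref{th:PS} for compactness below the bubble threshold, and the correct threshold values (for $t>0$ the single Hardy--Sobolev bubble energy $\frac{2s-t}{2(N-t)}S_{\gamma,t,s}^{(N-t)/(2s-t)}$; for $t=0$ the minimum of the Hardy bubble $\frac{s}{N}S_{\gamma,0,s}^{N/2s}$ and the $K$-weighted Sobolev bubble $\frac{s}{N}\|K\|_{L^\infty}^{-(N-2s)/2s}S^{N/2s}$, with the hypothesis $\|K\|_{L^\infty}<(S/S_{\gamma,0,s})^{N/(N-2s)}$ making the Hardy bubble the binding one). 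All of that is right.

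The genuine gap is in how you propose to verify the mountain-pass energy estimate. You plan to pass the threshold by expanding $\sup_{\tau}I_{K,t,f}^\gamma(u_0+\tau U_\varepsilon)$ with $U_\varepsilon$ a rescaled extremal of $S_{\gamma,t,s}$ concentrating at the origin, and you flag the $\varepsilon$-bookkeeping of the cross terms as the ``principal obstacle''. That obstacle is real and is precisely what this paper declares it \emph{cannot} handle by blow-up expansion: when $\gamma>0$, the first solution $u_0$ itself blows up at the origin and the extremal $U_{\gamma,t,s}$ is likewise singular there, so the interaction integrals in your expansion are concentrated exactly where both functions are unbounded and the standard Aubin--Talenti-type asymptotics are unavailable. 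You have not supplied a way past this, so the proposal does not close.

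The paper's Proposition~\ref{p:2-6-1} sidesteps $\varepsilon$-asymptotics entirely. With $w_{\gamma,t}^\tau(x)=W_{\gamma,t}(x/\tau)$ for a fixed positive ground state $W_{\gamma,t}$ of \eqref{Q}, and $\bar x_0$ a maximum point of $K$, Claim~2 there establishes the strict inequality
\begin{equation*}
I_{K,t,f}^{\gamma}\big(u_t+K(\bar x_0)^{-\frac{N-2s}{4s}}w_{\gamma,t}^\tau\big)<I_{K,t,f}^{\gamma}(u_t)+I_{1,t,0}^{\gamma}\big(K(\bar x_0)^{-\frac{N-2s}{4s}}w_{\gamma,t}^\tau\big)\quad\text{for \emph{every} }\tau>0,
\end{equation*}
using only (a) the equation satisfied by $u_t$ with test function $K(\bar x_0)^{-(N-2s)/4s}w_{\gamma,t}^\tau$, (b) the hypothesis $K\geq 1$, and (c) the elementary strict superadditivity $(a+b)^{p}>a^{p}+b^{p}+p\,a^{p-1}b$ for $a,b>0$, $p=2_s^*(t)>2$. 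Maximizing in $\tau$ then gives $\kappa_t<I_{K,t,f}^\gamma(u_t)+I_{1,t,0}^\gamma(W_{\gamma,t})$ with no asymptotic expansion at all. This clean algebraic step is the essential ingredient missing from your plan.

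A secondary discrepancy: the paper does not obtain the first solution by minimizing on a fixed ball. It partitions $\Hs$ using $\psi_t(u)=\|u\|_\gamma^2-(2_s^*(t)-1)\|K\|_{L^\infty}\int_{\Rn}|u|^{2_s^*(t)}/|x|^t\,dx$ and minimizes on $\Sigma_1^t=\{\psi_t>0\}\cup\{0\}$, with Lemma~\ref{l:32} providing the strict gap $c_0^t<c_1^t=\inf_{\Sigma^t}I_{K,t,f}^\gamma$. To rule out bubbles in the $(PS)$ limit at level $c_0^t$, Proposition~\ref{p:30-7-1} shows that any surviving bubble forces $\psi_0<0$ for the asymptotic profile, contradicting $u_n\in\Sigma_1^0$; a pure ``bubbles cost at least $\rho>0$'' count, as in your sketch, is not sufficient by itself because the weak limit of your minimizing sequence is not known to remain in $\overline{B}_{r_0}$ or to have energy $\geq c_0$. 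This part of your proposal would also need the paper's finer $\psi_t$-based argument to be airtight.
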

\vspace{2mm}

\begin{remark}
{\rm It is worth mentioning that   ${S}>
{S_{\ga,0,s}}$ for any $\ga>0$. To see this, we denote by $W$ the unique positive solution of \eqref{W} and
let $W_{\ga,0}$ be a minimum energy positive solution (ground state solution) of \eqref{Q} with $t=0$. Then,
$$I_{1,0,0}^{\ga}(W_{\ga,0})\leq I_{1,0,0}^{\ga}(W)<I_{1,0,0}^{0}(W).$$
A straight forward computation yields that
$I_{1,0,0}^{0}(W)=\frac{s}{N}S^\frac{N}{2s}$ and $I_{1,0,0}^{\ga}(W_{\ga,0})=\frac{s}{N}S_{\ga,0,s}^\frac{N}{2s}$. Consequently, $S>S_{\ga,0,s}$ for any $\ga>0$.  From this observation, it immediately follows that if $K\equiv 1$, then $(E^\ga_{1,t,f})$ admits two positive solutions for all $t\geq 0$ under the given assumtion
${\bf (F)}$ on $f$.}
\end{remark}

Note that the Hardy-Sobolev embedding
$\dot{H}^s(\Rn) \hookrightarrow L^{2^*_s(t)}(\Rn, |x|^{-t})$ for any $0\leq t<2s$ is continuous, but not compact. This noncompactness of the embedding even locally in any neighbourhood of zero leads to other additional difficulties, and more importantly, to new phenomenon concerning the possibility of blow up. Thus the variational functional associated
to~\eqref{P} does not satisfy the Palais-Smale  condition, briefly called $(PS)$
condition.  The lack of compactness of
the functional associated to~\eqref{P} is due to a concentration phenomenon. We analyze this noncompactness in Theorem~\ref{th:PS}, which is one of the most important theorems of the paper.
Using this theorem we prove existence and multiplicity of positive solutions to \eqref{P} in Theorem \ref{th:ex-f}. For that first we decompose $\dot{H}^s(\Rn)$ into three components which are homeomorphic to the interior, boundary and the exterior of the unit ball in $\dot{H}^s(\Rn)$ respectively. Then we prove that the energy functional associated to \eqref{P} attains its infimum on one of the components which serves as our first positive solution.  The second positive solution is obtained via a careful analysis on the $(PS)$ sequences associated to the energy functional and we construct a min--max critical level $\kappa_t$, where the $(PS)$ condition holds.

\medskip

This paper has been organised in the following way.
In Section~\ref{sec2}, we prove the Palais-Smale decomposition theorem associated with the functional corresponding to \eqref{P} (see Theorem~\ref{th:PS}). In Section~\ref{sec3}, we show existence of two positive solutions of \eqref{P}, namely
Theorem~\ref{th:ex-f}.  Appendix~A  contains some basic estimates which are used in proving the
Palais-Smale characterization theorem in Section~\ref{sec2}.
\vspace{2mm}

	{\bf Notation:} In this paper $\dot{H}^s(\Rn)'$ (or in short $\hms$) denotes the dual space of $\Hs$, $C, C', C'', C''',\cdots$ denote the generic constant which may vary from line to line. The symbol $B_r(y)$ stands for the ball centered at $y\in\Rn$ and of radius $r$. For simplicity $B_r$ means $B_r(0)$. Moreover, $u_+:=\max\{u, 0\}$ and $u_-:=-\min\{u, 0\}$. Therefore, according to our notation $u=u_+-u_-$. Finally, $S$ is the best Sobolev constant.

\section{Palais-Smale decomposition}\label{sec2}
 In this section we study the Palais-Smale sequences (in short, $(PS)$ sequences) of the functional $\bar I_{K,t,f}^{\gamma}$ associated to \eqref{P}
 \begin{align}\label{EF}
 \bar I_{K,t,f}^{\gamma}(u)&:= \frac{C_{N,s}}{4}\iint_{\R^{2N}}\frac{|u(x) - u(y)|^2}{|x-y|^{N+2s}}\,{\rm d}x\,{\rm d}y -\frac{\gamma}{2} \int_{\R^N}\frac{|u|^2}{|x|^{2s}} \,{\rm d}x \no\\
 &\quad\qquad- \frac{1}{2_s^*(t)}\int_{\R^N}K(x)\frac{|u|^{2_s^*(t)}}{|x|^t}\,{\rm d}x-\prescript{}{\hms}{\langle}f,u{\rangle}_{\dot{H}^s}\\
 &=\frac{1}{2}\|u\|_{\gamma}^{2}-\frac{1}{2_s^*(t)}\int_{\R^N}K(x)\frac{|u|^{2_s^*(t)}}{|x|^t}\,{\rm d}x -\prescript{}{\hms}{\langle}f,u{\rangle}_{\dot{H}^s},\nonumber
 \end{align}
 where $K$ and $f$ satisfy $(\bf{K})$ and $(\bf{F})$ respectively.

 We say that the sequence $(u_n)_n\in \Hs$ is a $(PS)$ sequence for $\bar I_{K,t,f}$ at level $\ba$ if $\bar I_{K,t,f}(u_n)\to \ba$ and $(\bar I_{K,t,f})'(u_n)\to 0$ in $\hms$. It is easy to see that the weak limit of a $(PS)$ sequence solves \eqref{P}  except the positivity.

 However the main difficulty is that the $(PS)$ sequence may not converge strongly and hence the weak limit can be zero even if $\ba>0.$
 The main purpose of this section is to classify $(PS)$ sequences of
 the functional $\bar I_{K,t, f}^{\gamma}$. Classification of $(PS)$
 sequences has been done for various problems having lack of compactness, to quote a few, we cite \cite{BP, PS, PS-2} in the nonlocal case with $\ga=0=t$,
 while in the local case \cite{BS, Sm}  with Hardy potentials and in~\cite{St}   without Hardy potentials. We also refer to~\cite{TF} for a more abstract approach  of the profile decomposition in general Hilbert spaces.  We establish
 a classification theorem for the $(PS)$ sequences of \eqref{EF} in the spirit of the above results. In \cite{BP, PS}, the noncompactness is completely described by the single blow up profile $W$, which is a solution of
\be\tag{$E_{1,0,0}^{0}$}\lab{W}(-\Delta)^s W=|W|^{2_s^*-2}W\quad\mbox{in}\quad\Rn, \quad W\in \dot{H}^s(\Rn).\ee
In \cite{BS, Sm} (the local case $s=1$), the noncompactness are due to  concentration occurring through two different profiles. Possibility of two different type of profiles are still present for \eqref{P} in the case $t=0$.

Let $t=0$ and let $W$ be any solution of \eqref{W}. Then,
 it can be easily verified that any sequence of the form
\be\lab{Wrn}
W^{r_n,\;y_n}(x):=K(y)^{-\frac{N-2s}{4s}}r_n^{-\frac{N-2s}{4s}}W\big(\frac{x-y_n}{r_n}\big),\ee
 is a $(PS)$ sequence for $\bar I_{K,0,0}^{\ga}$ if $y_n\to y\neq 0$ and $r_n\to 0$. If $y=0$, then $W^{r_n,\;y_n}$ remains a $(PS)$ sequence for $\bar I_{K,0,0}^{\ga}$ provided that $\frac{|y_n|}{r_n}\to\infty$. Also  $W^{r_n,\;y_n}\deb 0$ in $\Hs$ by \cite[Lemma~3]{PS}.
 \vspace{2mm}

Further, let $W_{\ga,t}$ be any solution of \eqref{Q} (where $t\geq 0$). Define a sequence $(W_{\gamma,t}^{R_n,0})_n$ of the form
\be\lab{Wgrn}
W_{\gamma,t}^{R_n,0}(x):=R_n^{-\frac{N-2s}{4s}}W_{\ga,t}\big(\frac{x}{R_n}\big),\ee
 where $R_n\to 0$. Then $W_{\gamma,t}^{R_n,0}\deb 0$ in $\Hs$ and
$(W_{\gamma,t}^{R_n,0})_n$ is a $(PS)$ sequence for $\bar I_{K,t,0}^{\ga}$ for $t\geq 0$.

\begin{theorem}\label{th:PS}
	Let $(u_n)_n$ be a $(PS)$ sequence for $\bar I_{K,t,f}^{\gamma}$ at the level $\ba.$ Then up to a subsequence, still denoted by $(u_n)_n$, the next properties hold.

If $t=0$, then there exist $n_1,\;n_2\in \N$, $n_2$ sequences $(R_n^k)_n\subset \R^{+}\;(1\leq k\leq n_2)$, $n_1$ sequences $(r_n^j)_n\subset \R^{+}$ and $(y_n^j)_n\subset \Rn\setminus \{0\}\;(1\leq j\leq n_1)$ and $0\leq \bar u\in\Hs$ such that
	\bea
	&(i)&\; u_n=\bar u+\sum_{j=1}^{n_1}K(y^j)^{-\frac{N-2s}{4s}}(W^j)^{r_n^j,\;y_n^j}+\sum_{k=1}^{n_2}(W_{\gamma,t}^k)^{R_n^k,0}+o(1)\no\\
	&(ii)&\; \dI (\bar u)= 0\no\\
	&(iii)&\; R_n^k\to 0 \;(1\leq k\leq n_2) \,\,\mbox{and}\,\, r_n^j\to 0\,(1\leq j\leq n_1) \no\\	
	&(iv)&\mbox{either }  \; y_n^j\to y^j\in \Rn \mbox{or } |y^j|\to\infty \mbox{ and }\frac{r^j_n}{|y_n^j|}\to 0\;(1\leq j\leq n_1)\no\\
	&(v)&\; \ba = \IKtf (\bar u)+\sum_{j=1}^{n_1}K(y^j)^{-\frac{N-2s}{2s}}\bar I_{1,0,0}^{0}(W^j)+\sum_{k=1}^{n_2}\bar I_{1,t,0}^{\gamma}(W_{\gamma,t}^{k})+o(1)\no\\
	&(vi)&\; \bigg|\log\big(\frac{r^i_n}{r_n^j}\big)\bigg|+\bigg|\frac{y_n^i-y_n^j}{r_n^j}\bigg|\underset{n\to\infty}\To\infty \quad\mbox{for}\,\, i\neq j\no\\
	&(vii)&\; \bigg|\log\big(\frac{R^k_n}{R_n^l}\big)\bigg|\underset{n\to\infty}\To\infty \quad\mbox{for}\,\, k\neq l\no,
	\eea
	where $o(1)\to 0$  in $\Hs$ as $n\to\infty$,  $(W^j)^{r_n^j,\;y_n^j}$ and $(W_{\gamma,t}^k)^{R_n^k,0}$ are $(PS)$ sequences of the form \eqref{Wrn} and \eqref{Wgrn} respectively, with $W=W^j$ and $W_{\ga,t}=W_{\ga,t}^k$.
	
		When $t>0$, the same conclusions hold, with $W^j= 0$ for all $j$.
	
In the case $n_1=0,\; n_2=0$ the above properties $(i)$--$(vii)$ are valid without $W,\;W_{\gamma},\;R_n^k,\;r_n^j$. In addition, if $u_n\geq 0,$ then $\bar{u}\geq 0$ and $W^j\geq 0$ for all $1\leq j \leq n_1$, $W^k_{\ga,t}\geq 0$ for all $1\leq k \leq n_2$. Therefore, $W^j=W_0$ for all $1\leq j\leq n_1$ due to the uniqueness up to the translation and dilation for the positive solutions of \eqref{W}.
\end{theorem}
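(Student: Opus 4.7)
\medskip
\noindent\textbf{Proof plan for Theorem~\ref{th:PS}.}

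The plan is to follow the general blueprint of profile decomposition for PS sequences (as in \cite{BP,PS,Sm,BS,TF}), adapted to the new combination of fractional Laplacian, Hardy potential and Hardy--Sobolev weight $|x|^{-t}$. First I would verify that any PS sequence $(u_n)_n$ is bounded in $\dot H^s(\mathbb R^N)$: from $\bar I_{K,t,f}^\gamma(u_n)=\beta+o(1)$, $(\bar I_{K,t,f}^\gamma)'(u_n)\to 0$ in $\hms$ and the equivalence of norms in Remark~\ref{r:norm}, standard manipulations give $\|u_n\|_\gamma\leq C$. Hence, up to a subsequence, $u_n\rightharpoonup \bar u$ in $\dot H^s$, and since $f$ is continuous on $\dot H^s$, passing to the limit in the equation yields $(\bar I_{K,t,f}^\gamma)'(\bar u)=0$, i.e.\ property $(ii)$. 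If $u_n\ge 0$, then also $\bar u\ge 0$.

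Next I set $v_n^{(1)}:=u_n-\bar u \rightharpoonup 0$ and apply Brezis--Lieb-type splittings (for the Gagliardo seminorm, the Hardy weight $|x|^{-2s}$, and the weighted nonlinearity $K(x)|\cdot|^{2_s^*(t)}/|x|^t$) to obtain
\[
\bar I_{K,t,f}^\gamma(u_n)=\bar I_{K,t,f}^\gamma(\bar u)+\bar I_{K,t,0}^\gamma(v_n^{(1)})+o(1),\qquad (\bar I_{K,t,0}^\gamma)'(v_n^{(1)})\to 0.
\]
The linear term $\langle f,v_n^{(1)}\rangle$ disappears because $v_n^{(1)}\rightharpoonup 0$, so $(v_n^{(1)})_n$ becomes a PS sequence for the \emph{homogeneous} functional $\bar I_{K,t,0}^\gamma$ weakly converging to $0$. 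If $v_n^{(1)}\to 0$ strongly the decomposition is trivial; otherwise I extract a first profile by rescaling. The key dichotomy is whether the defect of compactness concentrates at the origin or away from it. Choosing concentration points and scales via a Lions-type concentration functional (e.g.\ via the maximal function of $|v_n|^{2_s^*(t)}/|x|^t$ and of $|v_n|^{2_s^*}$ outside small balls), I distinguish two cases:

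\smallskip
\emph{(A) Concentration at the origin.} Let $R_n\to 0$ and consider the fractional-conformal rescaling $\tilde v_n(x):=R_n^{(N-2s)/2}v_n^{(1)}(R_n x)$, which preserves $\|\cdot\|_\gamma$ and the weighted Hardy--Sobolev integral because of the scale invariance of both the Hardy and the Hardy--Sobolev terms. Then $\tilde v_n\rightharpoonup W_{\gamma,t}^{1}$, a nonzero solution of \eqref{Q}, and undoing the rescaling produces the profile $(W_{\gamma,t}^{1})^{R_n,0}$ of the form \eqref{Wgrn}.

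\smallskip
\emph{(B) Concentration at a point $y_n$ with $|y_n|/r_n\to\infty$ (only possible if $t=0$).} Using $\tilde v_n(x):=r_n^{(N-2s)/2}v_n^{(1)}(r_nx+y_n)$ and exploiting that after translating far from the origin the Hardy potential $\gamma/|r_n x+y_n|^{2s}$ becomes negligible, while $K(r_nx+y_n)\to K(y)$ if $y_n\to y\neq 0$ (or $K\to 1$ if $|y_n|\to\infty$ by hypothesis ${\bf (K)}$), the limit equation collapses to \eqref{W} with coefficient $K(y)^{-(N-2s)/4s}$, producing a profile of the form \eqref{Wrn}. When $t>0$ the weight $|r_n x+y_n|^{-t}\to 0$ or is bounded, and a direct energy comparison with $S_{\gamma,t,s}$ rules this case out: this gives $W^j=0$ for every $j$.

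After one profile is extracted I define $v_n^{(2)}:=v_n^{(1)}-(\text{extracted profile})$, verify (again via Brezis--Lieb on the rescaled quantities, and using that profiles of type \eqref{Wrn} or \eqref{Wgrn} weakly vanish in $\dot H^s$) that it is again a PS sequence for $\bar I_{K,t,0}^\gamma$ at a strictly lower level, and iterate. Termination in finitely many steps follows because every nontrivial profile contributes at least $\min\bigl\{\tfrac{s}{N}S^{N/(2s)}\|K\|_{L^\infty}^{-(N-2s)/(2s)},\tfrac{2s-t}{2(N-t)}S_{\gamma,t,s}^{(N-t)/(2s-t)}\bigr\}>0$ to the energy, while the total energy is bounded. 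The orthogonality conditions $(vi)$, $(vii)$ are obtained by the standard argument: if two sequences of rescalings $(r_n^i,y_n^i)$, $(r_n^j,y_n^j)$ satisfied $|\log(r_n^i/r_n^j)|+|y_n^i-y_n^j|/r_n^j=O(1)$, then the two profiles could not simultaneously be weak limits of the same remainder. The energy identity $(v)$ falls out of the iterated Brezis--Lieb decomposition together with the scale invariance of $\bar I_{1,0,0}^0$ and $\bar I_{1,t,0}^\gamma$. Positivity of the profiles when $u_n\ge 0$ is preserved by rescaling and weak convergence, and uniqueness up to translation and dilation of positive solutions of \eqref{W} forces $W^j=W_0$.

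The main obstacle I expect is the Brezis--Lieb-type decomposition for the \emph{Hardy term} $\int|u_n|^2/|x|^{2s}\,dx$ after a rescaling that translates away from the origin: the singular potential neither splits by the classical pointwise argument nor by weak$-*$ measure decomposition, so one has to combine a Hardy-type estimate with the fact that the rescaled mass concentrates at scale $r_n$ near $y_n$, and $|y_n|/r_n\to\infty$, to prove that the contribution of the Hardy term to the rescaled profile vanishes in the limit. The ancillary estimates gathered in Appendix~A are precisely what makes this computation tractable in the nonlocal setting, where no pointwise representation of $(-\Delta)^s$ behaves well under splittings.
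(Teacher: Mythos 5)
Your plan captures the overall blueprint of the paper's argument: boundedness and weak convergence give $(ii)$; Br\'ezis--Lieb-type splittings reduce to the homogeneous functional $\bar I_{K,t,0}^\gamma$ on $v_n=u_n-\bar u\rightharpoonup 0$; iterated profile extraction with a positive energy quantum terminates in finitely many steps; and the two kinds of bubbles ($W_{\gamma,t}^{R_n,0}$ centered at $0$ versus Aubin--Talenti $W^{r_n,y_n}$ translated away from $0$, the latter only for $t=0$) are the right dichotomy. Your remarks about the Hardy term becoming negligible when $|y_n|/r_n\to\infty$ and about the role of Appendix~A are also on target.

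However, there is one genuine structural gap and one point of inaccuracy in how the dichotomy is set up. The gap: the paper's extraction of the translated profile $W^{r_n,y_n}$ (Step~7) relies on Lions' concentration--compactness in the \emph{tight} case $(v_n)_n\subset \dot H^s_0(B_R)$ for some fixed $R$, obtaining Dirac concentration at points $x_j$ on the unit sphere $|x_j|=1$ after the first $R_n$-rescaling. To drop the tightness assumption, the paper (Step~8) shows that for all but finitely many $k$ the annular energy $\liminf_n\int_{B_{k+1}\setminus B_k}K|v_n|^{2_s^*}$ vanishes, splits $v_n=\chi v_n+(1-\chi)v_n$ with a cutoff concentrated on such a slab, proves both pieces are again $(PS)$ sequences, and then applies the Kelvin transform $\mathcal K$ — shown to be an isometry on $\dot H^s$ and to conjugate $\bar I_{K,0,0}^\gamma$ into a functional of the same form — to bring $(1-\chi)v_n$ back to the tight case. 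This reduction, together with Lemma~\ref{L2} identifying $\mathcal K(W((\cdot-y_n)/r_n))$ with a translated-dilated bubble, is essential when $t=0$ and $\mathbb R^N$ is the domain; your proposal omits it entirely, so the translated-bubble extraction is only justified for compactly supported remainders.

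A smaller discrepancy: the paper does not first locate a concentration center and then decide whether it is $0$ or $y_n\ne 0$. It always first picks $R_n$ minimal with $\int_{B_{R_n}}K|v_n|^{2_s^*(t)}/|x|^t=\delta$ and rescales at $R_n$ centered at the origin; the real dichotomy is whether the weak limit $w$ of $w_n=R_n^{(N-2s)/2}v_n(R_n\cdot)$ is nonzero (giving the origin bubble $W_{\gamma,t}$) or zero. Only when $w=0$ and $t=0$ does a \emph{second} rescaling, based on the concentration function $Q_n(r)=\sup_y\int_{B_r(y)}K(R_nx)|w_n|^{2_s^*}$ and the Dirac measures at $|x_j|=1$, produce the $W^{r_n,y_n}$ profile with $r_n=s_nR_n$, $y_n=R_nq_n$ and $r_n/|y_n|\to 0$ automatically. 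Your ``Case (A)/(B)'' description is compatible with the outcome but does not reflect this two-scale mechanism, which is what guarantees that the resulting $(r_n,y_n)$ satisfy property $(iv)$. The observation that the cutoff argument forces $\int_{B_r}K(R_nx)|w_n|^{2_s^*(t)}/|x|^t=o(1)$ and thus contradicts the normalization when $t>0$ is what actually kills Case~(B) for $t>0$ in the paper; ``direct energy comparison with $S_{\gamma,t,s}$'' is too vague to replace that step.
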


\begin{proof}
	We prove the theorem in several steps.
	
\noindent\underline{\bf Step 1:} Using standard arguments it follows that there exists $M>0$ such that $$\|u_n\|_{\ga}<M \quad\mbox{for all } n\in\mathbb{N}.$$
 More precisely, as $n\to\infty$
	\begin{align*}
	\ba+o(1)+o(1)\|u_n\|_{\gamma} &\geq \IKtf(u_n) \, - \,
	\frac{1}{2_s^*(t)} \prescript{}{\hms}{\big\langle}\dI(u_n), u_n{\big\rangle}_{\dot H^s}\\
	 & =\bigg(\frac{1}{2}-\frac{1}{2_s^*(t)}\bigg)\|u_n\|_{\gamma}^{2} - \left(1- \frac{1}{2_s^*(t)} \right) \prescript{}{\hms}{\langle}f, u_n{\rangle}_{\dot H^s}\\
	 &\geq\bigg(\frac{1}{2}-\frac{1}{2_s^*(t)}\bigg)\|u_n\|_{\ga}^{2}-  \left(1- \frac{1}{2_s^*(t)} \right)\|f\|_{\hms}\|u_n\|_{\dot{H}^s}.
	\end{align*}
	As $2_s^*(t)>2$, from the above estimate it follows that $(u_n)_n$ is bounded in $\Hs$. Consequently, there exists $\bar u$ in $\Hs$
such that, up to a subsequence, still denoted by $(u_n)_n$, $u_n\rightharpoonup \bar u$ in $\Hs$
and $u_n\to \bar u$ a.e. in $\mathbb R^N$.  Moreover, as  $\prescript{}{\hms}{\big\langle}\dI(u_n), v{\big\rangle}_{\dot H^s}\rightarrow 0$ as $k\rightarrow\infty$ for all  $v\in\Hs$, then
	\be\label{B6}
	(-\De)^su_n -\gamma \frac{u_n}{|x|^{2s}} - K(x)|u_n|^{2_s^*(t)-2}u_n -f\To 0\quad \mbox{in}\quad \dot H^s(\Rn)'.
	\ee
	\underline{\bf Step 2:}
	From \eqref{B6}, letting $n \rightarrow \infty$, we get
	\begin{equation}\lab{24-5-1}
	\langle u_n, v\rangle_{\gamma}-\int_{\Rn}K(x)\frac{|u_n|^{2_s^*(t)-2}u_nv}{|x|^t}\;{\rm d}x- \, \prescript{}{\hms}{\langle}f, v{\rangle}_{\dot{H}^s} \, {\rightarrow} \,  0.
\end{equation}
As $u_n\rightharpoonup \bar u$ in $\Hs$, it is easy to see that $\langle u_n, v\rangle_{\gamma}\to \langle \bar u, v\rangle_{\gamma}$ for all $v\in\Hs$.

 \noindent
 {\bf Claim 1}: $\displaystyle\int_{\Rn}K(x)\frac{|u_n|^{2_s^*(t)-2}u_nv}{|x|^t}\;{\rm d}x \To \int_{\Rn}K(x)\frac{|\bar u|^{2_s^*(t)-2}\bar u v}{|x|^t} {\rm d}x$
 for all $v\in\Hs$.

Indeed, $u_n\to \bar u$ a.e. in $\mathbb R^N$ and
	\be\lab{24-5-2}\int_{\Rn}K(x)\frac{|u_n|^{2_s^*(t)-2}u_nv}{|x|^t}\;{\rm d}x = \int_{B_R}K(x)\frac{|u_n|^{2_s^*(t)-2}u_nv}{|x|^t}\;{\rm d}x +\int_{\Rn\setminus B_R}K(x)\frac{|u_n|^{2_s^*(t)-2}u_nv}{|x|^t}\;{\rm d}x. \ee
	 On $B_R$ we will show the convergence using Vitali's convergence theorem. For that, given any $\varepsilon>0$, we choose $\Om\subset
B_R$ such that
	$\displaystyle
\bigg(\int_{\Omega}\frac{|v|^{2_s^*(t)}}{|x|^t}{\rm d}x\bigg)^{\tfrac{1}{2_s^*(t)}}<\frac{\varepsilon}{\|K\|_{L^{\infty}}(MS_{\ga,t,s}^{-\frac{1}{2}})^{2_s^*(t)-1}}$. Since $\frac{|v|^{2_s^*(t)}}{|x|^t}$ is in  $L^1(\Rn)$, the above choice makes sense. Therefore,
\bea
	 \bigg{|}\int_{\Omega}K(x)\frac{|u_n|^{2_s^*(t)-2}u_nv}{|x|^t}\;{\rm d}x\bigg{|} &\leq& \|K\|_{L^{\infty}(\Rn)} \int_{\Omega}\frac{|u_n|^{2_s^*(t)-1}|v|}{|x|^t}\;{\rm d}x \no\\
	 &\leq& \|K\|_{L^{\infty}(\Rn)}\bigg(\int_{\Omega}\frac{|u_n|^{2_s^*(t)}}{|x|^t}{\rm d}x\bigg)^{\tfrac{2_s^*(t)-1}{2_s^*(t)}}\bigg(\int_{\Omega}\frac{|v|^{2_s^*(t)}}{|x|^t}{\rm d}x\bigg)^{\tfrac{1}{2_s^*(t)}}\no\\
	 &\leq& \|K\|_{L^{\infty}(\Rn)}S_{\ga,t,s}^{-\frac{2^*(t)-1}{2}}\|u_n\|_{\gamma}^{2_s^*(t)-1}\bigg(\int_{\Omega}\frac{|v|^{2_s^*(t)}}{|x|^t}{\rm d}x\bigg)^{\tfrac{1}{2_s^*(t)}}< \varepsilon\no
	 \eea
	 Thus $K\frac{|u_n|^{2_s^*(t)-2}u_nv}{|x|^t}$ is uniformly integrable in $B_R$. Therefore,  using Vitali's convergence theorem, we can pass the limit in the 1st integral on RHS of \eqref{24-5-2}.
	
	 To estimate the integral now on $B_R^c$, we first set $v_n=u_n-\bar u$.
	 Then $v_n\rightharpoonup 0$ in $\Hs.$ It is not difficult to see that for every $\varepsilon >0$ there exists
$C_\varepsilon>0$ such that
	 $$\bigg{|}|v_n+\bar u|^{2_s^*(t)-2}(v_n+\bar u)-|\bar u|^{2_s^*(t)-2}\bar u \bigg{|} <\varepsilon |v_n|^{2^*_s(t)-1}+C_\varepsilon|\bar u|^{2_s^*(t)-1}.$$
Therefore,
\begin{align*}
&\bigg{|}\int_{B_R^c}K(x)\bigg\{\frac{|u_n|^{2_s^*(t)-2}u_n}{|x|^t}-\frac{|\bar u|^{2_s^*(t)-2}\bar u}{|x|^t}\bigg\}v\;{\rm d}x\bigg{|}\\
	 	 &\leq \|K\|_{L^{\infty}(\Rn)}\bigg[\varepsilon \int_{B_R^c}\frac{|v_n|^{2_s^*(t)-1}|v|}{|x|^t}\;{\rm d}x+C_\varepsilon\int_{B_R^c}\frac{|\bar u|^{2_s^*(t)-1}|v|}{|x|^t}\bigg] \\
	 	&\leq \|K\|_{L^{\infty}(\Rn)}\bigg[\varepsilon \bigg(\int_{B_R^c}\frac{|v_n|^{2_s^*(t)}}{|x|^t}\;{\rm d}x\bigg)^{\tfrac{2_s^*(t)-1}{2_s^*(t)}}\bigg(\int_{B_R^c}
\frac{|v|^{2_s^*(t)}}{|x|^t}\bigg)^{\tfrac{1}{2_s^*(t)}}\\	&\qquad\qquad\qquad+C_\varepsilon\bigg(\int_{B_R^c}\frac{|\bar u|^{2_s^*(t)}}{|x|^t}\bigg)^{\tfrac{2_s^*(t)-1}{2_s^*(t)}}\bigg(\int_{B_R^c}\frac{|v|^{2_s^*(t)}}{|x|^t}\bigg)^{\tfrac{1}{2_s^*(t)}}\bigg]\\
	 	&\leq C\|K\|_{L^{\infty}(\Rn)}\bigg[\varepsilon \|v_n\|_{\gamma}^{2_s^*(t)-1}\bigg(\int_{B_R^c}
\frac{|v|^{2_s^*(t)}}{|x|^t}\bigg)^{\tfrac{1}{2_s^*(t)}}
+C_\varepsilon\|\bar u\|_{\gamma}^{2_s^*(t)-1}\bigg(\int_{B_R^c}
\frac{|v|^{2_s^*(t)}}{|x|^t}\bigg)^{\tfrac{1}{2_s^*(t)}}\bigg].	 	
\end{align*}
	Since $(\|v_n\|_{\ga})_n$ is uniformly bounded and $\frac{|v|^{2_s^*(t)}}{|x|^t}\in L^1(\Rn)$, given $\var>0$,  we can choose $R>0$ so large that	$$\bigg{|}\int_{B_R^c}K(x)\bigg\{\frac{|u_n|^{2_s^*(t)-2}u_n}{|x|^t}-\frac{|\bar u|^{2_s^*(t)-2}\bar u}{|x|^t}\bigg\}v\;{\rm d}x\bigg{|}<\var.$$
This completes the proof of claim 1.

Hence \eqref{24-5-1} yields that $\bar u$ is a solution of~\eqref{P}.
\medskip

\noindent\underline{\bf Step 3:} Here we show that $(u_n-\bar u)_n$ is a $(PS)$ sequence for $\IKto$ at the level $\ba-\IKtf (\bar u)$.
To see this, first we observe that as $n\to\infty$
$$\|u_n-\bar u\|_{\gamma}^2 = \|u_n\|_{\gamma}^2-\|\bar u\|_{\gamma}^2+o(1),$$
and by the Br\'ezis-Lieb lemma as $n\to\infty$
	$$\int_{\Rn} K(x)\frac{|u_n-\bar u|^{2_s^*(t)}}{|x|^t}{\rm d}x = \int_{\Rn}K(x)\frac{|u_n|^{2_s^*(t)}}{|x|^t}\;{\rm d}x - \int_{\Rn}K(x)\frac{|\bar u|^{2_s^*(t)}}{|x|^t}\;{\rm d}x+o(1).$$
Further as $u_n\rightharpoonup u$ and $f\in \dot{H}^s(\Rn)'$, we also have
$$
\prescript{}{\hms}{\langle}f, u_n{\rangle}_{\dot{H}^s}\To \prescript{}{\hms}{\langle}f,\bar u{\rangle}_{\dot{H}^s}.
$$
Therefore, as $n\to\infty$
\begin{align*}
	\IKto (u_n-\bar u) &= \frac{1}{2}\|u_n-\bar u\|_{\gamma}^2-\frac{1}{2_s^*(t)}\int_{\Rn}K(x)\frac{|u_n-\bar u|^{2_s^*(t)}}{|x|^t}\;{\rm d}x\\
	&= \frac{1}{2}\|u_n\|_{\gamma}^2-\frac{1}{2_s^*(t)}\int_{\Rn}K(x)\frac{|u_n|^{2_s^*(t)}}{|x|^t}\;{\rm d}x- \prescript{}{\hms}{\langle}f, u_n{\rangle}_{\dot H^s}\\
	&\,-\bigg\{\frac{1}{2}\|\bar u\|_{\gamma}^2-\frac{1}{2_s^*(t)}\int_{\Rn}K(x)\frac{|\bar u|^{2_s^*(t)}}{|x|^t}\;{\rm d}x - \prescript{}{\hms}{\langle}f, \bar u{\rangle}_{\dot H^s}\bigg\}+o(1)\\
	&= \IKtf(u_n)-\IKtf(\bar u)+o(1)\\
	&\To \ba-\IKtf(\bar u).
	\end{align*}
	Further, as $\prescript{}{\hms}{\big\langle}\dI(\bar u), v{\big\rangle}_{\dot H^s} =0$ for all $v\in\Hs$, we  obtain
\begin{align}\lab{24-5-3}
	 \prescript{}{\hms}{\big\langle}\dIo(u_n-\bar u), v{\big\rangle}_{\dot H^s}&=\langle u_n-\bar u,v\rangle_{\gamma}-\int_{\Rn}K(x)\frac{|u_n-\bar u|^{2_s^*(t)-2}(u_n-\bar u)v}{|x|^t}\;{\rm d}x\no\\
	 &=\langle u_n,v\rangle_{\gamma}-\int_{\Rn}K(x)\frac{|u_n|^{2_s^*(t)-2}u_nv}{|x|^t}\;{\rm d}x -\prescript{}{\hms}{\langle}f, v{\rangle}_{\dot H^s}\no\\
	 &-\bigg(\langle \bar u,v\rangle_{\gamma}-\int_{\Rn}K(x)\frac{|\bar u|^{2_s^*(t)-2}\bar u v}{|x|^t}\;{\rm d}x -\prescript{}{\hms}{\langle}f, v{\rangle}_{\dot H^s}\bigg)\no\\
	 &+\int_{\Rn}K(x)\bigg\{\frac{|u_n|^{2_s^*(t)-2}u_n}{|x|^t}-\frac{|\bar u|^{2_s^*(t)-2} \bar u}{|x|^t}\\
	 &\qquad\qquad\qquad-\frac{|u_n-\bar u|^{2_s^*(t)-2}(u_n-\bar u)}{|x|^t}\bigg\}v {\rm d}x\no\\
 &=o(1)+\int_{\Rn}K(x)\bigg\{\frac{|u_n|^{2_s^*(t)-2}u_n}{|x|^t}-\frac{|\bar u|^{2_s^*(t)-2} \bar u}{|x|^t}\no\\
	 &\qquad\qquad\qquad\qquad-\frac{|u_n-\bar u|^{2_s^*(t)-2}(u_n-\bar u)}{|x|^t}\bigg\}v {\rm d}x.\no
\end{align}
We observe that
\begin{align*}
 &\bigg|K\left\{|u_n|^{2_s^*(t)-2}u_k-|\bar{u}|^{2_s^*(t)-2}
 \bar{u}-|u_n-\bar{u}|^{2_s^*-2}(u_n-\bar{u})\right\}\bigg|\\
 &\hspace{4cm}\leq C\bigg(|u_n-\bar u|^{2^*_s(t)-2}|\bar u|+|u|^{2^*_s(t)-2}|u_n-\bar u|\bigg).
\end{align*}
Therefore, following the same method as in the proof of Claim~1 in Step~2,
we show  that as $n\to\infty$
\be\lab{25-5-8}\displaystyle\int_{\Rn}K(x)\bigg\{\frac{|u_n|^{2_s^*(t)-2}u_n}{|x|^t}-\frac{|\bar u|^{2_s^*(t)-2} \bar u}{|x|^t}-\frac{|u_n-\bar u|^{2_s^*(t)-2}(u_n-\bar u)}{|x|^t}\bigg\}v{\rm d}x=o(1)\ee
for all  $v\in\Hs$.
Plugging this back into \eqref{24-5-3}, we complete the proof of
Step~3.
\medskip
	
\noindent\underline{\bf Step 4:} Define $v_n:=u_n-\bar u.$
	Then $v_n\rightharpoonup 0$ in $\Hs$ and by Step~3, $(v_n)_n$ is a $(PS)$ sequence for $\IKto$ at the level $\ba-\IKtf(\bar u).$
Thus,
	\be\lab{25-5-4}
	\sup_{n\in\mathbb{N}}\|v_n\|_{\gamma}\leq C \quad\mbox{and}\quad
	\langle v_n, \va\rangle_{\ga}=\int_{\R{^N}}K(x)\frac{|v_n|^{2_s^*(t)-2}v_n\va}{|x|^t} \, {\rm d}x+o(1)\ee
as $n\to\infty$ for all $\va\in\Hs$. Therefore,	
$\|v_n\|_{\gamma}^2=\int_{\Rn}K(x)\frac{|v_n|^{2_s^*(t)}}{|x|^t}\;{\rm d}x+o(1)$. Thus, if $\int_{\Rn}K(x)\frac{|v_n|^{2_s^*(t)}}{|x|^t}\;{\rm d}x\longrightarrow 0$, then we are done when $k=l=0$
and the $(PS)$ sequence $(u_n)_n$ admits a strongly convergent subsequence.
	
If not, let $0<\delta <S_{\gamma,t,s}^{\frac{N-t}{2s-t}}\|K\|_{L^\infty(\mathbb R^N)}^{-\frac{N-2s}{2s-t}}$ such that
$$\limsup_{n\to\infty}\int_{\Rn}K(x)\frac{|v_n|^{2_s^*(t)}}{|x|^t}\;{\rm d}x>\delta.$$
Up to a subsequence, let $R_n>0$ be such that
$$\int_{B_{R_n}}K(x)\frac{|v_n|^{2_s^*(t)}}{|x|^t}\;{\rm d}x = \delta$$
and $R_n$ being minimal with this property. Define
$$w_n(x):= R_n^{\frac{N-2s}{2}}v_n(R_nx).$$ Therefore, $\|w_n\|_{\gamma}=\|v_n\|_{\gamma}$ and
	\be\label{PS3}
	\delta = \int_{B_{R_n}}K(x)\frac{|v_n|^{2_s^*(t)}}{|x|^t}\;{\rm d}x = \int_{B_1}K(R_nx)\frac{|w_n|^{2_s^*(t)}}{|x|^t}\;{\rm d}x.
	\ee
	Therefore, up to a subsequence
$$w_n \deb w\;\mbox{in }\Hs \quad\mbox{and}\quad
w_n\to w \mbox{ a.e. in }\Rn.$$
\medskip
Let us now distinguish two cases $w\neq 0$ and $w=0$.
\vspace{2mm}
	
\noindent\underline{\bf Step 5:} Assume that $w\neq 0.$

Since, $w_n\rightharpoonup w\neq 0$ and $v_n\rightharpoonup 0$, it follows that $R_n\to 0$ as $n\to\infty$. Next, we show that $w$ is a solution of \eqref{Q}.
Indeed, thanks to \eqref{25-5-4}, for any  $\phi\in C_c^{\infty}(\Rn)$
\begin{align}\lab{25-5-5}
	\langle w,\phi\rangle_{\gamma} &= \lim_{n\to\infty}\langle w_n,\phi\rangle_{\gamma}\no\\
	&=\lim_{n\to\infty} \frac{C_{N,s}}{2}\iint_{\R^{2N}}\frac{(w_n(x)-w_n(y))(\phi(x)-\phi(y))}{|x-y|^{N+2s}}\;{\rm d}x{\rm d}y-\gamma\int_{\Rn}\frac{w_n\phi}{|x|^{2s}}\;{\rm d}x\no\\
	&=\lim_{n\to\infty} \frac{C_{N,s}}{2}\iint_{\R^{2N}}\frac{R_n^{\frac{N-2s}{2}}(v_n(R_nx)-v_n(R_ny))(\phi(x)-\phi(y))}{|x-y|^{N+2s}}\;{\rm d}x{\rm d}y\no\\
&\qquad\qquad-\gamma\int_{\Rn}\frac{R_n^{\frac{N-2s}{2}}v_n(R_nx)\phi(x)}{|x|^{2s}}\;{\rm d}x\\
	&=\lim_{n\to\infty} \frac{C_{N,s}}{2}\iint_{\R^{2N}}\frac{R_n^{-\frac{N-2s}{2}}(v_n(x)-v_n(y))(\phi(\frac{x}{R_n})-\phi(\frac{y}{R_n}))}{|x-y|^{N+2s}}\;{\rm d}x{\rm d}y\no\\ &\qquad\qquad-\gamma\int_{\Rn}\frac{R_n^{-\frac{N-2s}{2}}v_n(x)\phi(\frac{x}{R_n})}{|x|^{2s}}\;{\rm d}x\no\\ &=\lim_{n\to\infty}\int_{\Rn}\!\!\!K(x)\frac{|v_n|^{2_s^*(t)-2}v_n}{|x|^t}R_n^{-\frac{N-2s}{2}}\phi\big(\frac{x}{R_n}\big)\;{\rm d}x=\lim_{n\to\infty}\int_{\Rn}K(R_nx)\!\!\!\frac{|w_n|^{2_s^*(t)-2}w_n}{|x|^t}\phi(x)\;{\rm d}x.\no
\end{align}
Clearly $K(R_nx)\frac{|w_n|^{2_s^*(t)-2}w_n}{|x|^t}\phi\to \frac{|w|^{2_s^*(t)-2}w}{|x|^t}\phi$ a.e. in $\Rn$,
since $K\in C(\Rn)$, with $K(0)=1$, and $w_n\to w$ a.e. in $\Rn$. Further, arguing as in the proof of Claim~1 in Step~2, we have
$K(R_nx)\frac{|w_n|^{2_s^*(t)-2}w_n}{|x|^t}\phi$ is uniformly integrable. Therefore,
as $\phi$ has compact support, using Vitali's convergence theorem we obtain
\be\lab{25-5-6}\lim_{n\to\infty}\int_{\Rn}K(R_nx)\frac{|w_n|^{2_s^*(t)-2}w_n}{|x|^t}\phi(x)\;{\rm d}x = \int_{\Rn}\frac{|w|^{2s^*(t)-2}w\phi}{|x|^t}{\rm d}x.\ee
	Combining \eqref{25-5-6} along with \eqref{25-5-5}, we conclude
that $w$ is a solution of \eqref{Q}.
		
	Define	 $$z_n(x):=v_n(x)-R_n^{-\frac{N-2s}{2}}w(\tfrac{x}{R_n}).$$
	
\noindent{\bf Claim 2:} $(z_n)_n$ is a $(PS)$ sequence for $\IKto$ at the level $\ba-\IKtf(\bar u)-\bar I_{1,0,0}^{\gamma}(w).$

To prove the claim, set $$\tilde{z}_n(x):=R_n^{\frac{N-2s}{2}}z_n(R_nx).$$ Then
$$ \tilde{z}_n(x)= w_n(x)-w(x) \quad\mbox{and}\quad  \|\tilde{z}_n\|_{\gamma}=\|w_n-w\|_{\gamma}=\|z_n\|_{\gamma}.$$
As $K(0)=1$ and $K$ is a continuous function, the  Br\'ezis-Lieb lemma and a  straight forward computation yield as $n\to\infty$
\begin{align*}
\int_{\Rn}K(R_nx)\frac{|w_n(x)|^{2_s^*(t)}}{|x|^t}\;{\rm d}x-\int_{\Rn}\frac{|w|^{2_s^*(t)}}{|x|^t}\;{\rm d}x
&=\int_{\Rn}\frac{\big|K^\frac{1}{2^*_s(t)}(R_nx)w_n-w\big|^{2_s^*(t)}}{|x|^t}\;{\rm d}x+o(1)\\
&=\int_{\Rn}K(R_nx)\frac{|w_n-w|^{2^*_s(t)}}{|x|^t}\;{\rm d}x+o(1).
\end{align*}
 Therefore, using the above relations, as $n\to\infty$
 \begin{align*}
\IKto(z_n)&=\frac{1}{2}\|z_n\|_{\gamma}^2-\frac{1}{2_s^*(t)}\int_{\Rn}K(x)\frac{|z_n|^{2_s^*(t)}}{|x|^t}\;{\rm d}x\\
&=\frac{1}{2}\|w_n-w\|_{\gamma}^2-\frac{1}{2_s^*(t)}\int_{\Rn}K(R_nx)\frac{|w_n-w|^{2_s^*(t)}}{|x|^t}\;{\rm d}x\\
&=\frac{1}{2}\big(\|w_n\|_{\gamma}^2-\|w\|_{\gamma}^2\big)-\frac{1}{2_s^*(t)}\int_{\Rn}K(R_nx)\frac{|w_n(x)|^{2_s^*(t)}}{|x|^t}\;{\rm d}x+\frac{1}{2_s^*(t)}\int_{\Rn}\frac{|w|^{2_s^*(t)}}{|x|^t}\;{\rm d}x+o(1)\\
&=\frac{1}{2}\|v_n\|_{\gamma}^2-\frac{1}{2_s^*(t)}\int_{\Rn}K(x)\frac{|v(x)|^{2_s^*(t)}}{|x|^t}\;{\rm d}x-\bigg(\frac{1}{2}\|w\|_{\gamma}^2-\frac{1}{2_s^*(t)}\int_{\Rn}\frac{|w|^{2_s^*(t)}}{|x|^t}\;{\rm d}x\bigg)+o(1)\\
&=\IKto(v_n)-\bar I_{1,0,0}^{\gamma}(w)+o(1)\\
&=\ba-\IKtf(\bar u)-\bar I_{1,0,0}^{\gamma}(w)+o(1).
\end{align*}
Next, let $\phi\in C^\infty_0(\Rn)$ be arbitrary and set $\phi_n(x):=R_n^{\frac{N-2s}{2}}\phi(R_nx)$. This in turn implies that
$\|\phi_n\|_{\gamma}=\|\phi\|_{\gamma}$ and $\phi_n\rightharpoonup 0$ in $\Hs$.  Therefore,
\begin{align}\lab{25-5-7}
\prescript{}{\hms}{\big\langle}\dIo(z_n), \phi{\big\rangle}_{\dot H^s} &=\langle z_n,\phi\rangle_{\gamma}-\int_{\Rn}K(x)\frac{|z_n|^{2_s^*(t)-2}z_n\phi}{|x|^t}\;{\rm d}x\no\\
	&= \langle \tilde{z}_n,\phi_n\rangle_{\gamma}-\int_{\Rn}K(R_nx)\frac{|\tilde{z}_n|^{2_s^*(t)-2}\tilde{z}_n\phi_n}{|x|^t}\;{\rm d}x\no\\
	&=\langle w_n-w,\phi_n\rangle_{\gamma}-\int_{\Rn}K(R_nx)\frac{|w_n-w|^{2_s^*(t)-2}(w_n-w)\phi_n}{|x|^t}\;{\rm d}x\no\\
	&= \langle w_n,\phi_n\rangle_{\gamma}-\int_{\Rn}K(R_nx)\frac{|w_n|^{2_s^*(t)-2}w_n\phi_n}{|x|^t}\;{\rm d}x\no\\
	&-\bigg(\langle w,\phi_n\rangle_{\gamma} -\int_{\Rn}\frac{|w|^{2_s^*(t)-2}w\phi_n}{|x|^t}\;{\rm d}x\bigg)\\
&+\int_{\Rn}(K(R_nx)-1)\frac{|w|^{2_s^*(t)-2}w\phi_n}{|x|^t}{\rm d}x\no\\
&+\int_{\Rn}\!\!\!K(R_nx)\bigg(\frac{|w_n|^{2_s^*(t)-2}w_n-|w|^{2_s^*(t)-2}w-|w_n-w|^{2_s^*(t)-2}(w_n-w)}{|x|^t}\bigg)\phi_n {\rm d}x\no\\
	&=\langle v_n,\phi\rangle_{\gamma}-\int_{\Rn}K(x)\frac{|v_n|^{2_s^*(t)-2}v_n\phi}{|x|^t}\;{\rm d}x-\prescript{}{\hms}{\big\langle}\bar I_{1,0,0}^{\gamma}{'}(w), \phi_n{\big\rangle}_{\dot H^s}+I^1_n+I^2_n\no\\
	&= \prescript{}{\hms}{\big\langle}\dIo(v_n), \phi{\big\rangle}_{\dot H^s}-0+I^1_n+I^2_n=o(1)+I^1_n+I^2_n.\no
\end{align}
	Now
$$I_n^1:=\int_{B_R}\big(K(R_nx)-1\big)\frac{|w|^{2_s^*(t)-2}w\phi_n}{|x|^t}\;{\rm d}x
	+\int_{B_R^c} \big(K(R_nx)-1\big)\frac{|w|^{2_s^*(t)-2}w\phi_n}{|x|^t}\;{\rm d}x.$$
Note that as $\frac{|w|^{2_s^*(t)}}{|x|^t}\in L^1(\Rn)$,	for $\varepsilon>0$ there exists $R=R(\varepsilon)>0$ such that
\begin{align*} \bigg{|}\int_{B_R^c}\big(K(R_nx)-1\big)\frac{|w|^{2_s^*(t)-2}w\phi_n}{|x|^t}\;{\rm d}x\bigg{|}&\leq C\bigg(\int_{B_R^c}\frac{|w|^{2^*_s(t)}}{|x|^t}{\rm d}x\bigg)^{\frac{2_s^*(t)-1}{2_s^*(t)}}\bigg(\int_{\Rn}\frac{|\phi_n|^{2_s^*(t)}}{|x|^t}\;{\rm d}x\bigg)^{\frac{1}{2_s^*(t)}}\\
	&\leq C\bigg(\int_{B_R^c}\frac{|w|^{2^*_s(t)}}{|x|^t}{\rm d}x\bigg)^{\frac{2_s^*(t)-1}{2_s^*(t)}}\|\phi\|_{\ga}<\varepsilon.
\end{align*}
On the other hand, as $K\in C(\Rn)$ and $\lim_{|x|\to\infty}K(x)=1$ implies that $K\in L^\infty(\Rn)$, applying the H\"{o}lder inequality followed by the Hardy-Sobolev inequality, it is easy to see that  $$\big(K(R_nx)-1\big)\frac{|w|^{2_s^*(t)-2}w\phi_n}{|x|^t}$$ is uniformly integrable. Therefore, using Vitalis convergence theorem, we get	$$\int_{B_R}\big(K(R_nx)-1\big)\frac{|w|^{2_s^*(t)-2}w\phi_n}{|x|^t}\;{\rm d}x=o(1).$$
Hence, $I^1_n=o(1)$ as $n\to\infty$.	
	
Next, we aim to show that  $$I^2_n:=\int_{\Rn}K(R_nx)\bigg\{\frac{|w_n|^{2_s^*(t)-2}w_n-|w|^{2_s^*(t)-2}w-|w_n-w|^{2_s^*(t)-2}(w_n-w)}{|x|^t}\bigg\}\phi_n {\rm d}x=o(1).$$
Indeed, this follows as in the proof of \eqref{25-5-8}, since $\int_{\Rn}\frac{|\phi_n|^{2_s^*(t)}}{|x|^t}\;{\rm d}x=\int_{\Rn}\frac{|\phi|^{2_s^*(t)}}{|x|^t}\;{\rm d}x<\infty$.
Hence, from \eqref{25-5-7} we conclude the proof of Claim 2.	
\medskip

\noindent\underline{\bf Step 6:}  Assume that $w= 0$.
\vspace{2mm}

	Let $\va\in C^\infty_0\big(B_1\big)$, with $0\leq\va\leq 1$. Set $\psi_n(x) :=[\va(\tfrac{x}{R_n})]^2v_n(x)$. Clearly $(\psi_n)_n$
is a bounded sequence in~$\Hs$. Thus,
\begin{align*}
o(1)&=\prescript{}{\hms}{\big\langle}\dIo(v_n), \psi_n{\big\rangle}_{\dot H^s} \\
&=\langle v_n,\psi_n\rangle_{\gamma}-\int_{\Rn}K(x)\frac{|v_n|^{2_s^*(t)-2}v_n\psi_n}{|x|^t}{\rm d}x\\	&=\frac{C_{N,s}}{2}\iint_{\R^{2N}}\frac{(v_n(x)-v_n(y)\big(\va^2(\frac{x}{R_n})v_n(x)-\va^2(\frac{y}{R_n})v_n(y)\big)}{|x-y|^{N+2s}}{\rm d}x{\rm d}y-\gamma \int_{\Rn}\frac{v_n^2(x)\va^2(\frac{x}{R_n})}{|x|^{2s}}{\rm d}x\\
&\;\;\quad-\int_{\Rn}K(x)\frac{\va^2(\frac{x}{R_n})|v_n|^{2_s^*(t)}}{|x|^t}{\rm d}x\\ &=\frac{C_{N,s}}{2}\iint_{\R^{2N}}\frac{\big(v_n(R_nx)-v_n(R_ny)\big)\big(\va^2(x)v_n(R_nx)-\va^2(y)v_n(R_ny)\big)R_n^{N-2s}}{|x-y|^{N+2s}}{\rm d}x{\rm d}y\\
&\qquad-\gamma \int_{\Rn}\frac{v_n^2(R_nx)\va^2(x)R_n^{N-2s}}{|x|^{2s}}{\rm d}x
-\int_{\Rn}K(x)\frac{|v_n|^{2_s^*(t)-2}\big(\va(\frac{x}{R_n})v_n\big)^{2}}{|x|^t}{\rm d}x.
\end{align*}
Therefore
\begin{equation}\label{PS1}\begin{aligned}
&\frac{C_{N,s}}{2}\iint_{\R^{2N}}\!\!\!\frac{\big(v_n(R_nx)-v_n(R_ny)\big)\big(\va^2(x)v_n(R_nx)-\va^2(y)v_n(R_ny)\big)R_n^{N-2s}}{|x-y|^{N+2s}}{\rm d}x{\rm d}y\\
&\quad\qquad-\gamma \int_{\Rn}\!\!\!\frac{v_n^2(R_nx)\va^2(x)R_n^{N-2s}}{|x|^{2s}}{\rm d}x=\int_{\Rn}\!\!\!
K(x)\frac{|v_n|^{2_s^*(t)-2}\big(\va(\frac{x}{R_n})v_n\big)^{2}}{|x|^t}{\rm d}x+o(1).
\end{aligned}\end{equation}
Now,
\begin{align}\label{26-5-1}
\mbox{RHS of \eqref{PS1}}
&=\int_{B_1}K(R_nx)\frac{|v_n(R_nx)|^{2_s^*(t)-2}\big(\va(x)v_n(R_nx)\big)^2R_n^{N-t}}{|x|^t}\;{\rm d}x+o(1)\no\\
&=\int_{B_1}\frac{\big|K(R_nx)^\frac{1}{2_s^*(t)-2}w_n(x)\big|^{2_s^*(t)-2}\big(\va(x)w_n(x)\big)^2}{|x|^t}\;{\rm d}x+o(1)\no\\
&\leq\bigg(\int_{B_1}K(R_nx)^\frac{2_s^*(t)}{2_s^*(t)-2}\frac{|w_n(x)|^{2_s^*(t)}}{|x|^t}\;{\rm d}x\bigg)^{\frac{2_s^*(t)-2}{2_s^*(t)}}\cdot\no\\
&\qquad\times\bigg(\int_{\Rn}\frac{|\va w_n|^{2_s^*(t)}}{|x|^t}\;{\rm d}x\bigg)^{\frac{2}{2_s^*(t)}}+o(1)\\
	&\leq\frac{\|K\|_{L^\infty}^\frac{2}{2_s^*(t)}}{S_{\gamma,t,s}}
	 \bigg(\int_{B_1}K(R_nx)\frac{|w_n|^{2_s^*(t)}}{|x|^t}\;{\rm d}x\bigg)^{\frac{2_s^*(t)-2}{2_s^*(t)}}\|\va w_n\|_{\gamma}^2+o(1)\no\\
&\leq\frac{\|K\|_{L^\infty}^\frac{2}{2_s^*(t)}\delta^{\frac{2s-t}{N-t}}}{S_{\gamma,t,s}}\|\va w_n\|_{\gamma}^2 +o(1)\no\\
	&<\|\va w_n\|_{\gamma}^2 +o(1)\;\;\mbox{(By the choice of $\delta$ fixed
in Step~4)}.\no
\end{align}

\noindent{\bf Claim 3:} As $n\to\infty$
\be
\mbox{LHS of \eqref{PS1}}= \|\va w_n\|_{\gamma}^2+o(1)\lab{26-5-6}.\ee
Indeed,
\begin{align}\label{PS2}
\mbox{LHS of \eqref{PS1}}&=  \frac{C_{N,s}}{2}\iint_{\R^{2N}}\frac{\big(v_n(R_nx)-v_n(R_ny)\big)\big(\va^2(x)v_n(R_nx)-\va^2(y)v_n(R_ny)\big)R_n^{N-2s}}{|x-y|^{N+2s}}\;{\rm d}x{\rm d}y\no\\
&\qquad\qquad-\gamma \int_{\Rn}\frac{|\va w_n|^2}{|x|^{2s}}{\rm d}x\no\\	&=\frac{C_{N,s}}{2}\iint_{\R^{2N}}\frac{\big(w_n(x)-w_n(y)\big)\big(\va^2(x)w_n(x)-\va^2(y)w_n(y)\big)}{|x-y|^{N+2s}}\;{\rm d}x{\rm d}y\no\\
&\qquad\qquad-\gamma \int_{\Rn}\frac{|\va w_n|^2}{|x|^{2s}}{\rm d}x\\	
&=\frac{C_{N,s}}{2} \iint_{\R^{2N}}\frac{|\va(x)w_n(x)-\va(y)w_n(y)|^2}{|x-y|^{N+2s}}{\rm d}x{\rm d}y-\gamma\int_{\Rn}\frac{|\va w_n|^2}{|x|^{2s}}\;{\rm d}x\no\\
	&\qquad\qquad-\frac{C_{N,s}}{2} \iint_{\R^{2N}}\frac{(\va(x)-\va(y))^{2}w_n(x)w_n(y)}{|x-y|^{N+2s}}{\rm d}x{\rm d}y\no\\
&=\|\va w_n\|_{\ga}^2-\frac{C_{N,s}}{2} \iint_{\R^{2N}}\frac{(\va(x)-\va(y))^{2}w_n(x)w_n(y)}{|x-y|^{N+2s}}{\rm d}x{\rm d}y.\no
\end{align}
	Now,
\begin{align*}
	\iint_{\R^{2N}}\frac{(\va(x)-\va(y))^{2}w_n(x)w_n(y)}{|x-y|^{N+2s}}{\rm d}x{\rm d}y &= \int_{x\in B_1}\int_{y\in B_1}\quad +\int_{x\in B_1}\int_{y\in B_1^c}\quad +\int_{x\in B_1^c}\int_{y\in B_1}\\
	&=:\mathcal{I}_n^1 +\mathcal{I}_n^2 +\mathcal{I}_n^3.\no
\end{align*}
Of course, $\mathcal{I}_n^2=\mathcal{I}_n^3$, as the integral is symmetric with respect to $x$ and $y$.
\begin{align}\lab{26-5-3}
\mathcal{I}_n^1&= \int_{x\in B_1}\int_{y\in B_1}\frac{(\va(x)-\va(y))^{2}w_n(x)w_n(y)}{|x-y|^{N+2s}}\;{\rm d}x{\rm d}y\no\\
	&\leq C\int_{x\in B_1}\int_{y\in B_1}\frac{|w_n(x)||w_n(y)|}{|x-y|^{N+2s-2}}\;{\rm d}x{\rm d}y\no\\
	&\leq C\bigg(\int_{x\in B_1}\int_{y\in B_1}\frac{|w_n(x)|^2}{|x-y|^{N+2s-2}}\;{\rm d}x{\rm d}y\bigg)^{\tfrac{1}{2}}\cdot\no\\
&\qquad\quad\times\bigg(\int_{x\in B_1}\int_{y\in B_1}\frac{|w_n(y)|^2}{|x-y|^{N+2s-2}}{\rm d}x{\rm d}y\bigg)^{\tfrac{1}{2}}\\
	&\leq C\int_{x\in B_1}\int_{y\in B_1}\frac{|w_n(x)|^2}{|x-y|^{N+2s-2}}\;{\rm d}x{\rm d}y\no\\
	&\leq C\int_{x\in B_1}\bigg(\int_{|z|<2}\frac{1}{|z|^{N+2s-2}}\;{\rm d}z\bigg)|w_n(x)|^2{\rm d}x\no\\
	&\leq C\|w_n\|_{L^2(B_1)}^2
= o(1)\;\;\mbox{\big(as $w=0$ implies $w_n\to 0$ in $L^2_{\rm loc}(\Rn)$\big).}\no
\end{align}
Furthermore,
\begin{align}\lab{26-5-4}
\mathcal{I}_n^2&=\int_{x\in B_1}\int_{y\in B_1^c}\frac{(\va(x)-\va(y))^{2}w_n(x)w_n(y)}{|x-y|^{N+2s}}{\rm d}x{\rm d}y\no\\
	&\leq  \int_{x\in B_1}\int_{y\in B_1^c\cap \{|x-y|\leq 1\}}+ \int_{x\in B_1}\int_{y\in B_1^c\cap \{|x-y|\geq 1\}}\\
	&=: \mathcal{I}_n^{21}+\mathcal{I}_n^{22},\no
\end{align}
where
\begin{align*}	
	\mathcal{I}_n^{21}&\leq C\bigg(\int_{x\in B_1}\int_{y\in B_1^c\cap \{|x-y|\leq 1\}}\frac{|w_n(x)|^2}{|x-y|^{N+2s-2}}\;{\rm d}y{\rm d}x\bigg)^{\tfrac{1}{2}}\cdot\\
	&\qquad\times\bigg(  \int_{x\in B_1}\int_{y\in B_1^c\cap \{|x-y|\leq1\}}\frac{|w_n(y)|^2}{|x-y|^{N+2s-2}}\;{\rm d}y{\rm d}x\bigg)^{\tfrac{1}{2}}\\
	&=:CJ_n^{1}\cdot J_n^{2}.\\
\end{align*}
Now,
$$
|J_n^{1}|^2\leq\int_{x\in B_1}\bigg(\int_{|z|<1}\frac{1}{|z|^{N+2s-2}}\;{\rm d}z\bigg)|w_n(x)|^2{\rm d}x \leq C\|w_n\|_{L^2(B_1)}^2	= o(1),$$
and
\begin{align*}
	|J_n^2|^2&= \int_{x\in B_1}\int_{y\in B_1^c}\frac{\mathbf{1}_{\{|x-y|<1\}}(x,y)|w_n(y)|^2}{|x-y|^{N+2s-2}}\;{\rm d}y{\rm d}x\\
	&\leq\int_{y\in B_1^c}\bigg(\int_{x\in B_1}\frac{\mathbf{1}_{\{|x-y|<1\}}(x,y)}{|x-y|^{N+2s-2}}\;{\rm d}x\bigg)|w_n(y)|^2\;{\rm d}y\\
	&\leq \int_{y\in B_2}\bigg(\int_{x\in B_1}\frac{\mathbf{1}_{\{|x-y|<1\}}(x,y)}{|x-y|^{N+2s-2}}\;{\rm d}x\bigg)|w_n(y)|^2\;{\rm d}y\\
	&\leq C\|w_n\|_{L^2(B_2)}^2\leq C'.
\end{align*}
Therefore,  $\mathcal{I}_n^{21}= o(1)$ as $n\to\infty$.
Moreover,
\begin{align*}
\mathcal{I}_n^{22}&=\int_{x\in B_1}\int_{y\in B_1^c\cap \{|x-y|\geq 1\}} \frac{|w_n(x)||w_n(y)||\va (x)-\va (y)|^2}{|x-y|^{N+2s}}\;{\rm d}y{\rm d}x\\
	&\leq C\int_{x\in B_1}\int_{y\in B_1^c\cap \{|x-y|\geq 1\}} \frac{|w_n(x)||w_n(y)|}{|x-y|^{N+2s}}\;{\rm d}y{\rm d}x\\
	&\leq C\bigg(\int_{x\in B_1}\int_{y\in B_1^c\cap \{|x-y|\geq 1\}} \frac{|w_n(x)|^2}{|x-y|^{N+2s}}\;{\rm d}y{\rm d}x\bigg)^{\frac{1}{2}}\cdot\\
	&\qquad\qquad\times\bigg(\int_{x\in B_1}\int_{y\in B_1^c\cap \{|x-y|\geq 1\}} \frac{|w_n(y)|^2}{|x-y|^{N+2s}}\;{\rm d}y{\rm d}x\bigg)^{\frac{1}{2}}\\
	&\leq C\bigg(\int_{x\in B_1}\bigg(\int_{|z|\geq 1}\frac{1}{|z|^{N+2s}}\;{\rm d}z\bigg)|w_n(x)|^2{\rm d}x\bigg)^{\frac{1}{2}}\cdot\\
	&\qquad\qquad\times\bigg(\int_{x\in B_1}\int_{|z|\geq 1} \frac{|w_n(x+z)|^2}{|x+z|^{2s}}\frac{|x+z|^{2s}}{|z|^{N+2s}}\;{\rm d}z{\rm d}x\bigg)^{\frac{1}{2}}\\
	&\leq C' \|w_n\|_{L^2(B_1)}\bigg[\int_{x\in B_1}\bigg(\int_{\Rn} \frac{|w_n(x+z)|^2}{|x+z|^{2s}}{\rm d}z\bigg){\rm d}x\bigg]^{\frac{1}{2}},
	\end{align*}
since $|z|\geq 1$ and $|x|<1$ implies $\frac{|x+z|^{2s}}{|z|^{N+2s}}\leq  C$.  Therefore, using the Hardy inequality, we obtain from the last of the above estimate that as $n\to\infty$
$$
\mathcal{I}_n^{22} \leq C'' \|w_n\|_{L^2(B_1)}\|w_n\|_{\Hs}^2 = o(1).
$$
Putting the above estimates together, we obtain from \eqref{26-5-4} that $\mathcal{I}_n^2=o(1)$ as $n\to\infty$. This, along with~\eqref{26-5-3}, concludes the proof of Claim~3.
\vspace{2mm}
	
Combining Claim 3 with \eqref{26-5-1} yields
\be\lab{26-5-9}
\|\va w_n\|_{\gamma}= o(1) \mbox{ as }n\to\infty.
\ee
Substituting this into \eqref{26-5-6} and comparing with \eqref{PS1} yields
as $n\to\infty$
 $$\int_{\Rn}K(R_nx)\frac{\va^2(x)|w_n(x)|^{2_s^*(t)}}{|x|^t}\;{\rm d}x =o(1).$$
Therefore, \be\lab{26-5-8}\int_{B_r}K(R_nx)\frac{|w_n|^{2_s^*(t)}}{|x|^t}\;{\rm d}x =o(1), \quad\mbox{for any}\quad 0<r<1.\ee
 But this contradicts  \eqref{PS3} when $t>0$. Therefore, $w=0$
cannot happen in the case $t>0$, i.e.,
 $$t>0 \implies w\neq 0.$$

Consequently, from now onwards, we restrict ourselves to the case $t=0$ and $w=0$.
 \medskip

\noindent\underline{\bf Step 7}: 	
Let $t=0$ and $w=0$. First we consider the tight case, $(v_n)_n\subseteq \dot H^s_0(B_R)$, for some fixed ball of radius $R>0$ (where $\dot H^s_0(B_R)$ is the closure of $C_0^{\infty}(B_R)$ with respect to the $\Hs$ norm). The remaining case will be obtained by a splitting argument together with a Kelvin transform.

	\medskip

 Therefore, in view of \eqref{PS3} and  \eqref{26-5-8}, using the concentration-compactness principle in the tight case \cite{Li}, it follows that in the sense of measure,
	\be\label{PS4}
	K(R_nx)|w_n|^{2_s^*}dx\big{|}_{\{|x|\leq 1\}}\overset{*}{\rightharpoonup} \sum_{j}C_{x_j}\delta_{x_j},
	\ee
where $x_j\in\Rn$ satisfies $|x_j|=1.$ Let $\bar C:=\max_{j}C_{x_j}$ and define
	\be\label{PS5}
	Q_n(r):= \sup_{y\in\Rn}\int_{B_r(y)}K(R_nx)|w_n|^{2_s^*}\;{\rm d}x.
	\ee
	Clearly, $Q_n(r)>{C}/{2}$ for each $r>0$ large enough. Moreover, \eqref{PS4} gives
$$\liminf_{n\to\infty}Q_n(r)\ge\frac{C}{2}.$$
Hence, there exist sequences $(s_n)_n\subset \R^+$ and $(q_n)_n\subset\Rn$ such that $s_n\to 0$ and $|q_n|>{1}/2$ and
	\be\label{PS6}
	\frac{C}{2}=\sup_{q\in\Rn}\int_{B_{s_n}(q)}K(R_nx)w_n^{2_s^*}{\rm d}x=\int_{B_{s_n}(q_n)}K(R_nx)w_n^{2_s^*}\;{\rm d}x.
	\ee
	Define $\theta_n(x):= s_n^{\tfrac{N-2s}{2}}w_n(s_nx+q_n)$. Thus $\|\theta_n\|_{\ga}=\|w_n\|_{\ga}$ for any  $n\in\mathbb{N}$. Consequently, up to a subsequence, there exists $\theta\in\Hs$ such that $\theta_n\rightharpoonup \theta$ in $\Hs$ and $\theta_n\to \theta$ a.e. in $\Rn$.
	
	First note that $\theta\neq 0$. Otherwise, choosing $\va\in C^\infty_0\big(B_1(x)\big)$, with $0\leq \va\leq 1$, for an arbitrary but fixed $x\in\Rn$, and proceeding exactly as in obtaining \eqref{26-5-9}, we are able to show that $\theta_n\to 0$ in $L^{2^*_s}_{\rm loc}(\Rn)$. On the other hand, from \eqref{PS6} it follows that
	$$\int_{B_1}K(s_nR_nx+q_n)\theta_n^{2_s^*}\;{\rm d}x =\frac{C}{2}>0.$$
which leads to a contradiction. Thus, $\theta\neq 0.$
Recall  that
$$\theta_n(x)= s_n^{\frac{N-2s}{2}}w_n(s_nx+q_n)=(s_nR_n)^{(\frac{N-2s}{2})}v_n(s_nR_nx+R_nq_n).
$$
Define $r_n=s_nR_n=o(1)$ and $y_n=R_nq_n$. Hence, $\frac{r_n}{|y_n|}<2s_n=o(1)$ and, up to a subsequence, $y_n\to y$ in~$\Rn$. From Lemma \ref{L1}, we deduce that
	$$\theta=K(y)^{-\frac{N-2s}{4s}}W^{\tau,a}\quad \mbox{for some } \tau>0,\;a\in\Rn, $$
	where $W$ is a solution of \eqref{W}
	and that $n\mapsto\tilde{v}_n(x):= v_n(x)-K(y)^{\frac{4s}{N-2s}}W^{r_n\tau,y_n+r_na}(x)$
	is a $(PS)$ sequence for $\IKto$ at level $\ba-\IKtf(\bar u)-K(y)^{-\frac{N-2s}{2s}}\bar I_{1,0,0}^{(0)}(W),$ where $W$ is a solution of \eqref{W}.
\medskip
	
	In summary, in both cases $t>0$ and $t=0$, starting from a $(PS)$ sequence $(v_n)_n$ of $\IKto$ we have found another $(PS)$ sequence $(\tilde{v}_n)_n$ of $\IKto$ at a strictly lower level, with a fixed minimum amount of decrease. Since $\sup_n \|v_n\|_{\gamma}\leq C<\infty$, the process should stop after finitely many steps.
\medskip
	
\noindent\underline{\bf Step 8:} When $t=0$ we only dealt with the case $(v_n)_n\subset \dot H^s_0(B_R)$ for some fixed $R>0.$ Now we are going to relax the assumption $(v_n)_n\subset \dot H^s_0(B_R)$.
	
	Let us define
$$
\tilde{f}(k):=\liminf_{n\to\infty}\int_{B_{k+1}\setminus B_{k}}K(x)|v_n|^{2_s^*}\;{\rm d}x.
$$
We claim that $\tilde{f}(k)=0$ for all but finitely many $k$'s.

Indeed, if $\tilde{f}(k)>0$ for some $k$, then $\liminf_{n\to\infty}\int_{B_{k+1}\setminus B_{k}}K(x)|v_n|^{2_s^*}\;{\rm d}x >0.$ Therefore,
\be\lab{29-5-1}
\liminf_{n\to\infty}\int_{B_{k+1}\setminus B_{k}}|v_n|^{2_s^*}\;{\rm d}x >0.
\ee
By Step~6, for any $\va\in C^\infty_0(\Rn)$ as $n\to\infty$
\begin{equation}\lab{29-5-2}\hspace{-.3cm}\begin{aligned}
	\|K\|_{L^\infty}^\frac{2}{2^*_s}\bigg(\!\int_{\mbox{supp} (\va)}\!\!\!K(R_nx)|w_n|^{2_s^*}{\rm d}x\bigg)^{\frac{2_s^*-2}{2_s^*}}\!\bigg(\!\int_{\Rn}\!\!\!|\va w_n|^{2_s^*}{\rm d}x\bigg)^{\frac{2}{2_s^*}}
&\geq \|\va w_n\|_{\gamma}^2+o(1)\\
&\geq S_{\gamma,0,s}\bigg(\!\int_{\R^N}\!\!\!|\va w_n|^{2_s^*}\bigg)^{\tfrac{2}{2_s^*}}\!+o(1).
\end{aligned}\end{equation}
Fix any $\eps>0$ and choose $\va\in C^\infty_0(\Rn)$
	such that $\va\equiv 1$ in $B_{k+1}\setminus B_{k}$ and supp$(\va)\subseteq B_{k+1+\eps}\setminus B_{k-\eps}$ and $0\leq\va\leq 1$. Define, $\va_n(x)=\va(R_nx)$. Then	$$\liminf_{n\to\infty}\int_{\Rn}|\va_nw_n|^{2^*_s}dx=\liminf_{n\to\infty}\int_{\Rn}|\va v_n|^{2^*_s}dx>\liminf_{n\to\infty}\int_{B_{k+1}\setminus B_{k}}|v_n|^{2^*_s}dx>0.$$
Now \eqref{29-5-2}, with $\va=\va_n$, yields as $n\to\infty$	
$$\int_{B_{k+1+\eps}\setminus B_{k-\eps}}K(x)|v_n|^{2_s^*}\;{\rm d}x\geq \|K\|_{L^\infty}^{-\frac{N-2s}{2s}}S_{\gamma,0,s}^\frac{N}{2s}+o(1).$$
Combining the above, as $\eps>0$ is arbitrary, we obtain $\tilde f(k)\geq \|K\|_{L^\infty}^{-\frac{N-2s}{2s}}S_{\gamma,0,s}^\frac{N}{2s}$. Therefore, since $(v_n)_n$ is bounded in $L^{2^*_s}(\Rn)$, it follows that $\tilde{f}(k)=0$ for all but finitely many $k$'s and this completes the proof of the claim.
\medskip
	
Now given such a $k$ for which $\tilde f(k)=0$, we take a cut-off function $\chi\in C^\infty_0(\Rn)$ such that $\chi\equiv 1$ on $B_{k}$ and $\chi\equiv 0$ on $B_{k+1}^c$ and $0\leq \chi\leq 1$. We shall show that both $(\chi v_n)_n$ and $\big((1-\chi)v_n\big)_n$ are $(PS)$ sequences for $I^{\ga}_{K,0,0}$. Indeed for $h\in C^\infty_0(\Rn)$ as $n\to\infty$
\begin{align}\label{PS8}
\langle \chi v_n,h\rangle_{\ga} &= \frac{C_{N,s}}{2}\iint_{\R^{2N}}\frac{\big(\chi (x)v_n(x)-\chi (y)v_n(y)\big)\big(h(x)-h(y)\big)}{|x-y|^{N+2s}}\;{\rm d}x{\rm d}y-\ga\int_{\Rn}\frac{\chi v_n h}{|x|^{2s}}{\rm d}x\no\\
    &= \frac{C_{N,s}}{2}\iint_{\R^{2N}}\frac{\big(v_n(x)-v_n(y)\big)\big(\chi(x)h(x)-\chi(y)h(y)\big)}{|x-y|^{N+2s}}\;{\rm d}x{\rm d}y-\ga\int_{\Rn}\frac{v_n (\chi h)}{|x|^{2s}}{\rm d}x\no\\
    &\quad +\frac{C_{N,s}}{2}\iint_{\R^{2N}}\frac{\big(\chi (x)-\chi (y)\big)h(x)v_n(y)}{|x-y|^{N+2s}}\;{\rm d}x{\rm d}y -\frac{C_{N,s}}{2}\iint_{\R^{2N}}\frac{\big(\chi (x)-\chi (y)\big)h(y)v_n(x)}{|x-y|^{N+2s}}\;{\rm d}x{\rm d}y \\
    &= \langle v_n,\chi h\rangle_{\ga}+C_{N,s}\iint_{\R^{2N}}\frac{\big(\chi (x)-\chi (y)\big)h(x)v_n(y)}{|x-y|^{N+2s}}\;{\rm d}x{\rm d}y\no\\
    &= \int_{\Rn}K(x)|v_n|^{2_s^*-2}v_n(\chi h)\;{\rm d}x+C_{N,s}\mathbb{I}_n+o(\|h\|),\no
\end{align}
where $\mathbb{I}_n:= \displaystyle\iint_{\R^{2N}}\frac{\big(\chi (x)-\chi (y)\big)h(x)v_n(y)}{|x-y|^{N+2s}}\;{\rm d}x{\rm d}y.$
\vspace{2mm}

\noindent{\bf Claim 4:} $\mathbb{I}_n=o(\|h\|_{\ga})$ as $n\to\infty$.
	
Indeed,
	$$\mathbb{I}_n\leq \bigg(\iint_{\R^{2N}}\frac{|\chi (x)-\chi (y)|^2h^2(x)}{|x-y|^{N+2s}}\;{\rm d}x{\rm d}y\bigg)^\frac{1}{2}\bigg(\iint_{\R^{2N}}\frac{|\chi (x)-\chi (y)|^2v_n^2(y)}{|x-y|^{N+2s}}\;{\rm d}x{\rm d}y\bigg)^\frac{1}{2}.$$
Now,
 \begin{align*}
\iint_{\R^{2N}}\frac{|\chi (x)-\chi (y)|^2v_n^2(y)}{|x-y|^{N+2s}}\;{\rm d}x{\rm d}y&=\int_{y\in B_{k+1}}\int_{x\in B_{k+1}} +\int_{y\in B_{k+1}}\int_{x\in B_{k+1}^c}+
\int_{y\in B_{k+1}^c}\int_{x\in B_{k+1}}\\
&=:\mathbb{I}_n^1+	\mathbb{I}_n^2+\mathbb{I}_n^3.
\end{align*}
Since $v\deb 0$ in $\Hs$ implies $v_n\to 0$ in $L^2_{\rm loc}(\Rn)$, we see that as $n\to\infty$
 \begin{align}\lab{31-5-1}
\mathbb{I}_n^1&=\int_{y\in B_{k+1}}\int_{x\in B_{k+1}} \frac{|\chi (x)-\chi (y)|^2v_n^2(y)}{|x-y|^{N+2s}}{\rm d}x{\rm d}y\no\\
&\leq C\!\int_{y\in B_{k+1}}\!\!\int_{x\in B_{k+1}}\!\!\frac{v_n^2(y)}{|x-y|^{N+2s-2}}\;{\rm d}x{\rm d}y\no\\
&\leq C\int_{y\in B_{k+1}}\!\!\!\bigg(\int_{x\in B_{k+1}\cap \{|x-y|<1\}}\!\frac{{\rm d}x}{|x-y|^{N+2s-2}}+\int_{x\in B_{k+1}\cap \{|x-y|\geq1\}}\!\!\!{\rm d}x\bigg)v_n^2(y){\rm d}y\\
&\leq C'\int_{y\in B_{k+1}}v_n^2(y){\rm d}y\no\\
&=o(1);\no
\end{align}
\begin{align}\lab{31-5-2}
\mathbb{I}_n^2&=\int_{y\in B_{k+1}}\int_{x\in B_{k+1}^c} \frac{|\chi (x)-\chi (y)|^2v_n^2(y)}{|x-y|^{N+2s}}\;{\rm d}x{\rm d}y\no\\
&\leq C\!\int_{y\in B_{k+1}}\!\!\!\bigg(\int_{x\in B_{k+1}^c\cap\{|x-y|\leq 1\}}\!\frac{{\rm d}x}{|x-y|^{N+2s-2}}\!+\!\int_{x\in B_{k+1}^c\cap\{|x-y|\geq 1\}}\!\frac{{\rm d}x}{|x-y|^{N+2s}} \bigg)v_n^2(y){\rm d}y\\
&\leq C''\int_{y\in B_{k+1}}v_n^2(y){\rm d}y
=o(1);\no
\end{align}
\begin{align*}\mathbb{I}_n^3&=\int_{y\in B_{k+1}^c}\int_{x\in B_{k+1}}\frac{|\chi (x)-\chi (y)|^2v_n^2(y)}{|x-y|^{N+2s}}\;{\rm d}x{\rm d}y\\
&=\int_{x\in B_{k+1}}\int_{y\in B_{k+1}^c}\frac{|\chi (x)-\chi (y)|^2v_n^2(y)}{|x-y|^{N+2s}}\;{\rm d}y{\rm d}x\\
&=\int_{x\in B_{k+1}}\int_{y\in B_{k+1}^c\cap\{|x-y|\geq 1\}}  +\int_{x\in B_{k+1}}\int_{y\in B_{k+1}^c\cap\{|x-y|\leq 1\}}\\
&=: \mathbb{I}_n^{31}+\mathbb{I}_n^{32}.
\end{align*}
For estimating $\mathbb{I}_n^{31}$,  we choose $\eps>0$ arbitrary and $R>>k+1$ so that
 \begin{align}\lab{31-5-3}
  \mathbb{I}_n^{31}&=\int_{x\in B_{k+1}}\int_{y\in B_{k+1}^c\cap\{|x-y|\geq 1\}}
  \frac{\chi^2(x)v_n^2(y)}{|x-y|^{N+2s}}\;{\rm d}y{\rm d}x\no\\
  &\leq\int_{x\in B_{k+1}}\bigg(\int_{y: |x-y|\geq 1} \frac{v_n^2(y)}{|x-y|^{N+2s}}\;{\rm d}y\bigg){\rm d}x\no\\
  &\leq \int_{x\in B_{k+1}}\bigg(\int_{B_R}v_n^2(y){\rm d}y+\int_{B_R^c\cap\{|x-y|\geq 1\}}\frac{v_n^2(y)}{|y|^{N+2s}}\frac{|y|^{N+2s}}{|x-y|^{N+2s}}{\rm d}y\bigg)dx\\
  &\leq\int_{x\in B_{k+1}}\bigg(o(1)+C\int_{B_R^c}\frac{v_n^2(y)}{|y|^{N+2s}}{\rm d}y\bigg)dx\no\\
    &\leq C'''\bigg(o(1)+C\big(\int_{B_R^c}{|v_n|^{2^*_s}}{\rm d}y\big)^\frac{2}{2^*_s}\big(\int_{B_R^c}\frac{{\rm d}y}{|y|^{(N+2s)N/2s}}\big)^\frac{2s}{N}\bigg)
<\eps \quad\mbox{for}\,\, R>>k+1,\no
 \end{align}
since $(v_n)_n$ is uniformly bounded in $L^{2^*_s}(\Rn)$ and $|y|^{(N+2s)N/2s}\in L^1(\{|y|>1\})$. Moreover,
 \begin{align}\lab{31-5-4}
\mathbb{I}_n^{32}&=\int_{x\in B_{k+1}}\int_{y\in B_{k+1}^c\cap\{|x-y|\leq 1\}}
\frac{|\chi (x)-\chi (y)|^2v_n^2(y)}{|x-y|^{N+2s}}\;{\rm d}y{\rm d}x\no\\
&\leq C\int_{x\in B_{k+1}}\int_{y\in B_{k+1}^c}\frac{\mathbf{1}_{|x-y|\leq 1}(x,y)v_n^2(y)}{|x-y|^{N+2s-2}}\;{\rm d}y{\rm d}x\no\\
&= C\int_{y\in B_{k+1}^c}\bigg(\int_{x\in B_{k+1}}\frac{\mathbf{1}_{|x-y|\leq 1}(x,y)}{|x-y|^{N+2s-2}}{\rm d}x\bigg) v_n^2(y){\rm d}y\\
&= C\int_{y\in B_{k+2}}\bigg(\int_{x\in B_{k+1}}\frac{\mathbf{1}_{|x-y|\leq 1}(x,y)}{|x-y|^{N+2s-2}}{\rm d}x\bigg) v_n^2(y){\rm d}y\no\\
&\leq C''''\int_{y\in B_{k+2}} v_n^2(y){\rm d}y=o(1).\no
\end{align}
Combining \eqref{31-5-1}--\eqref{31-5-4}, we obtain
$$\bigg(\displaystyle\iint_{\R^{2N}}\frac{|\chi (x)-\chi (y)|^2v_n^2(y)}{|x-y|^{N+2s}}\;{\rm d}x{\rm d}y\bigg)^\frac{1}{2}=o(1)$$
as $n\to\infty$.
Similarly, it follows that
$$\bigg(\displaystyle\iint_{\R^{2N}}\frac{|\chi (x)-\chi (y)|^2h^2(x)}{|x-y|^{N+2s}}\;{\rm d}x{\rm d}y\bigg)^\frac{1}{2}\leq \|h\|_{\Hs}.$$
Hence Claim 4 is proved.
\medskip

Therefore, using \eqref{PS8} and the fact that $\tilde f(k)=0$, we obtain as $n\to\infty$
 \begin{align*}
\prescript{}{\hms}{\langle}\dIo(\chi v_n), h{\rangle}_{\dot H^s} &= \langle \chi v_n,h\rangle_{\gamma}-\int_{\Rn}K(x)|\chi v_n|^{2_s^*-2}(\chi v_n)h\;{\rm d}x\\
    &= \int_{\Rn}K(x)\{\chi-\chi^{2_s^*-1}\}|v_n|^{2_s^*-2}v_nh\;{\rm d}x+o(\|h\|)\\
    &\leq C\|K\|_{L^{\infty}(\Rn)}\bigg(\int_{B_{k+1}\setminus B_{k}}\!\!\!|v_n|^{2_s^*}{\rm d}x\bigg)^{\tfrac{2_s^*-1}{2_s^*}}\!\!\|h\|_{\ga}+o(\|h\|)
    =o(\|h\|).
\end{align*}
This is the required inequality.    	
\medskip
	
Now, as $n\to\infty$
\begin{equation}\label{PS9}\begin{aligned}
\int_{\Rn}K(x)|v_n|^{2_s^*}{\rm d}x &= \int_{\Rn}K(x)|\chi v_n+(1-\chi) v_n|^{2_s^*}{\rm d}x\\
	 &=\int_{\Rn}K(x)|\chi v_n|^{2_s^*} {\rm d}x+\int_{\Rn}K(x)|(1-\chi)v_n|^{2_s^*}{\rm d}x+o(1).
\end{aligned}\end{equation}
The last line in \eqref{PS9} follows from the fact
that supp$(\chi)\subseteq B_{k+1}$ and supp$(1-\chi)\subset \Rn\setminus B_{k}$ and all the remaining terms in the expansion of $|\chi v_n+(1-\chi) v_n|^{2_s^*}$ involves product of some powers of $\chi v_n$ and $(1-\chi)v_n$ whose support lies in $B_{k+1}\setminus B_{k},$ but in the definition of $\chi$ we have chosen the same $k$ for which $\tilde{f}(k)=0.$
	
We know that $(v_n)_n$ is a $(PS)$ sequence of $\bar I_{K,0,0}^\ga$ at the level $\ba-\IKtf(\bar u)$. Hence, from \eqref{PS9} the level of the $(PS)$ sequence $(v_n)_n$ of $\bar I_{K,0,0}^\ga$ is integrally split between the two new $(PS)$ sequences $(\chi v_n)_n$ and $\big((1-\chi) v_n\big)_n$.
\medskip
	
Let $\mathcal{K}$ denote the Kelvin transform in $\Hs$ given by,
$$\mathcal{K}u(x):= \frac{1}{|x|^{N-2s}}u(|x|^{-2}x).$$ 
Therefore, it is known that (see \cite{RS}),
$$(-\De)^s \mathcal{K}u(x)=\frac{1}{|x|^{N+2s}}(-\De)^s u(|x|^{-2}x).$$

\noindent{\bf Claim 5:} $\|\mathcal{K}(u)\|_{\Hs}= \|u\|_{\Hs}$.
	
To prove the claim, first assume that $u\in C_0^{\infty}(\Rn)$. Thus
\be\label{PS11}
	 |(-\De)^s\mathcal{K}u(x)|\leq \frac{C}{1+|x|^{N+2s}}.
\ee
Therefore,
\begin{align*}
\|\mathcal{K}(u)\|_{\Hs} &=  \int_{\Rn}|(-\De)^{\tfrac{s}{2}}\mathcal{K}u(x)|^2\;{\rm d}x\\
	 &= \int_{\Rn}(-\De)^{s}\mathcal{K}(u(x)) \mathcal{K}u(x)\;{\rm d}x\;\;\mbox{(Using \eqref{PS11} and $\mathcal{K}(u)\in\Hs$)}\\
	 &=\int_{\Rn} \frac{1}{|x|^{N+2s}}(-\De)^s u(|x|^{-2}x)\frac{1}{|x|^{N-2s}}u(|x|^{-2}x) {\rm d}x\\
	 &=\int_{\Rn}\big((-\De)^s u(x)\big) u(x) {\rm d}x\\
	 &=\int_{\Rn}|(-\De)^{\tfrac{s}{2}}u(x)|^2 {\rm d}x \;\;\(\mbox{as }u\in C_0^{\infty}(\Rn)\)\\
	 &=\|u\|_{\Hs}^2
\end{align*}
Next for any $u\in\Hs$, let $(u_n)_n\in C_0^{\infty}(\Rn)$ be such that $u_n\to u$ in $\Hs$. Then \be\label{PS12}\|\mathcal{K}(u_n)\|_{\Hs}=\|u_n\|_{\Hs}\to \|u\|_{\Hs}.\ee Thus,
$$\|\mathcal{K}(u_n)-\mathcal{K}(u_m)\|_{\Hs}
=\|\mathcal{K}(u_n-u_m)\|_{\Hs}=\|u_n-u_m\|_{\Hs}\underset{n,m\to\infty}
{\To} 0.$$
Hence, $(\mathcal{K}(u_n))_n$  is a Cauchy sequence in $\Hs,$ so there exists $v\in\Hs$ such that $\mathcal{K}(u_n)\to v.$ Now, as $u_n\to u$ a.e. in $\Rn$ so $\mathcal{K}(u_n)\to \mathcal{K}(u)$ a.e. in $\Rn$. Consequently, $v=\mathcal{K}(u).$ Therefore, passing the limit in \eqref{PS12}, we have $\|\mathcal{K}(u)\|_{\Hs}=\|u\|_{\Hs}$ for all $u\in\Hs$.
	
	  Using Claim 5 along with standard change of variable, it is easy to see that
	  $$\bar I_{K,0,0}^\ga\big(\mathcal{K}(u)\big)=\frac{1}{2}\|u\|^2_{\dot{H}^s}-\frac{\ga}{2}\int_{\Rn}\frac{|u(x)|^2}{|x|^{2s}}{\rm d}x-\frac{1}{2^*_s}\int_{\Rn}\mathcal{K}\big(|x|^{-2}x\big)|u|^{2^*_s}(x){\rm d}x,$$
that is, $\bar I_{K,0,0}^\ga\circ\mathcal{K}$ has the same expression as $\bar I_{K,0,0}^\ga$ except that $K(x)$ has to be replaced by $K(|x|^{-2}x)$. Hence, Steps~5 and~7 
 can be applied to $(\mathcal{K}\big((1-\chi)v_n\big))_n$, since this sequence is now a $(PS)$ sequence for $\bar I_{K,0,0}^\ga\circ\mathcal{K}$ in $\dot H^s_0(B_{\frac{1}{k}})$. Using again  either Step~5 or Step~7, we obtain the characterization of $(\mathcal{K}\big((1-\chi)v_n\big))_n$ and from that we deduce the characterization of $((1-\chi) v_n)_n$; the only point which needs to be taken care of $\mathcal{K}\big(W(\tfrac{x-y_n^j}{r_n^j})\big)$. This is the concern in Lemma~\ref{L2}.
	
Finally $(vi)$ and $(vii)$ follow as in \cite[Theorem~4]{PS}. Thus the proof is completed.
\end{proof}

\section{Proof of the main Theorem \ref{th:ex-f}}\label{sec3}
In this section we assume without further mentioning that all the assumptions of
 Theorem~\ref{th:ex-f} are satisfied.
We first establish existence of two positive critical points for the  functional
	\be
	 I_{K,t,f}^{\gamma}(u)
=\frac{1}{2}\|u\|_{\gamma}^{2}-\frac{1}{2_s^*(t)}\int_{\R^N}K(x)\frac{u_+^{2_s^*(t)}}{|x|^t}\,{\rm d}x -\prescript{}{\hms}{\langle}f,u{\rangle}_{\dot{H}^s}.\no
	\ee
Clearly, if $u$ is a critical point of $I_{K,t,f}^{\ga}$ , then $u$ solves
	\begin{equation}\label{P1}\begin{cases}
	(-\Delta)^s u -\gamma\dfrac{u}{|x|^{2s}}=K(x)\dfrac{u_+^{2^*_s(t)-1}}{|x|^t}+f(x)\quad\mbox{in }\,\,\Rn,\\
	u\in \dot{H}^s(\Rn).
	\end{cases}
	\end{equation}
	
\begin{remark}\lab{r:30-7-3}
{\rm If $u$ is a weak solution of \eqref{P1} and $f$ is a nonnegative functional in $\Hs'$, then taking $v = u_{-}$ as a test function in \eqref{P1}, we obtain
$$	-\|u_-\|_{\ga}-\iint_{\R^{2N}}\frac{|u_+(y)u_-(x)+u_+(x)u_-(y)|}{|x-y|^{N+2s}}\;{\rm d}x{\rm d}y = \prescript{}{\hms}{\langle}f,u_-{\rangle}_{\dot{H}^s}\geq 0,
$$
which in turn implies that $u_{-}ˆ' \equiv 0$, i.e., $u\geq 0$. Therefore, the maximum principle \cite[Theorem~1.2]{DPQ} yields	that $u$ is a positive solution to \eqref{P1}. Hence $u$ is a solution to \eqref{P}.}
\end{remark}
To establish the existence of two critical points for $I_{K,t,f}^{\ga}$, we first need to prove some auxiliary results. Towards that, we partition $\Hs$ into three disjoint sets. Let  $\psi_t :\Hs\to\R$ be defined by
$$
\psi_t(u):=\|u\|_{\ga}^2-\big(2_s^*(t)-1\big)\|K\|_{L^{\infty}(\Rn)}\int_{\Rn}\frac{|u|^{2_s^*(t)}}{|x|^t}\;{\rm d}x
$$
and set
\begin{gather*}
\Sigma_1^t: =\big\{u\in\Hs \;:\; u=0\mbox{ or }\,\, \psi_t(u)>0\big\}, \quad \Sigma_2^t: =\big\{u\in\Hs \;:\; \psi_t(u)<0\big\},\\
\Sigma^t :=\big\{u\in\Hs \;:\;\psi_t(u)=0\big\}.\end{gather*}
\begin{remark}\lab{r:30-7-1}
{\rm If  $u\in\Sigma^t$,  then
$$\|u\|_{\ga}^2 =\big(2_s^*(t)-1\big)\|K\|_{L^{\infty}(\Rn)}\int_{\Rn}\frac{|u|^{2_s^*(t)}}{|x|^t}\;{\rm d}x\leq \big(2_s^*(t)-1\big)\|K\|_{L^{\infty}}S_{\ga,t,s}^{-\frac{2_s^*(t)}{2}}\|u\|_{\ga}^{2_s^*(t)}.$$
	Therefore, $\|u\|_{\ga}$ and $\|u\|_{L^{2_s^*(t)}(\Rn, |x|^{-t})}$ are bounded away from $0$ for all $u\in\Si^t$.}
\end{remark}
Set
\be\label{S31}
c_{0}^t:=  \inf_{\Sigma_1^t} I_{K,t,f}^\ga(u),\quad c_1^t:=\inf_{\Sigma^t}I_{K,t,f}^\ga(u), \quad t\geq 0.
\ee

\begin{remark}\lab{r:30-7-2}
{\rm For any $\la>0$ and $u\in\Hs$	
$$
\psi_t(\la u)=\la^2\|u\|_{\ga}^2-\la^{2_s^*(t)}
\big(2_s^*(t)-1\big)\|K\|_{L^{\infty}(\Rn)}\int_{\Rn}\frac{|u|^{2_s^*(t)}}{|x|^t}\;{\rm d}x.$$
 Moreover, $\psi_t(0)=0$ and $\la\mapsto \psi_t(\la u)$ is a strictly concave function. Thus for any $u\in\Hs$ with $\|u\|_{\ga}=1$, there exists a unique $\la=\la(u)$ such that $\la u\in\Sigma^t$. Moreover, as $\psi_t(\la u)=\big(\la^2-\la^{2_s^*(t)}\big)\|u\|_{\ga}^2$ for all $u\in\Sigma^t$, then $\la u\in\Sigma_1^t$ for all $\la\in (0,1)$
and $\la u\in\Sigma_2^t$ for all $\la>1$.}
\end{remark}

\begin{lemma}\lab{l31}
Assume that $C_t$ is defined as in Theorem~$\ref{th:ex-f}$. Then
$$
\frac{4s-2t}{N-2t+2s}\|u\|_{\ga}\geq C_t S_{\ga,t,s}^\frac{N-t}{4s-2t}\quad\mbox{ for all }\,\,u\in\Sigma^t, \,\, t\geq 0.$$
\end{lemma}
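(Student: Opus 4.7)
The idea is that membership of $u$ in $\Sigma^t$ combined with the Hardy-Sobolev inequality \eqref{S_gats} gives a lower bound on $\|u\|_\gamma$ that differs from the claimed inequality only by the harmless multiplicative factor $\tfrac{4s-2t}{N-2t+2s}$. So the proof reduces to one chain of inequalities followed by routine bookkeeping of the exponents.

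First I would start from the defining identity for $u\in\Sigma^t$,
\begin{equation*}
\|u\|_\gamma^2 \;=\; \bigl(2_s^*(t)-1\bigr)\|K\|_{L^\infty(\R^N)}\int_{\R^N}\frac{|u|^{2_s^*(t)}}{|x|^t}\,{\rm d}x,
\end{equation*}
and insert the Hardy-Sobolev inequality \eqref{S_gats} in the form
$\int_{\R^N}|u|^{2_s^*(t)}|x|^{-t}\,{\rm d}x \le S_{\gamma,t,s}^{-2_s^*(t)/2}\,\|u\|_\gamma^{2_s^*(t)}$.
This yields
\begin{equation*}
\|u\|_\gamma^2 \;\le\; (2_s^*(t)-1)\,\|K\|_{L^\infty}\,S_{\gamma,t,s}^{-2_s^*(t)/2}\,\|u\|_\gamma^{2_s^*(t)}.
\end{equation*}
Since $u\in\Sigma^t$ has $\|u\|_\gamma>0$ (as observed in Remark~\ref{r:30-7-1}), I can divide and rearrange to obtain
\begin{equation*}
\|u\|_\gamma^{\,2_s^*(t)-2} \;\ge\; \frac{S_{\gamma,t,s}^{\,2_s^*(t)/2}}{(2_s^*(t)-1)\,\|K\|_{L^\infty}}.
\end{equation*}

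Next I would compute the exponents. Using $2_s^*(t)=\frac{2(N-t)}{N-2s}$, one gets $2_s^*(t)-2=\frac{4s-2t}{N-2s}$ and $\frac{2_s^*(t)}{2}=\frac{N-t}{N-2s}$, so
\begin{equation*}
\|u\|_\gamma \;\ge\; S_{\gamma,t,s}^{\frac{N-t}{4s-2t}}\bigl((2_s^*(t)-1)\,\|K\|_{L^\infty}\bigr)^{-\frac{N-2s}{4s-2t}}.
\end{equation*}
Multiplying both sides by $\tfrac{4s-2t}{N-2t+2s}$ and recognising the expression for $C_t$ given in Theorem~\ref{th:ex-f} on the right-hand side produces the claim.

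There is essentially no obstacle here: the lemma is a direct algebraic consequence of the Hardy-Sobolev inequality applied to the constraint defining $\Sigma^t$. The only thing to be careful about is the exponent arithmetic, making sure that $(2_s^*(t)-1)\|K\|_{L^\infty}$ appears raised to the power $-\frac{N-2s}{4s-2t}$ as in the definition of $C_t$, which it does.
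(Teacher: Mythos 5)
Your proof is correct and follows essentially the same route as the paper's: both start from the defining identity of $\Sigma^t$, apply the Hardy--Sobolev inequality \eqref{S_gats} to eliminate the weighted $L^{2_s^*(t)}$ term, rearrange to obtain a lower bound on $\|u\|_\gamma$, and then unwind the exponents to recognise $C_t$. The exponent arithmetic you carry out ($2_s^*(t)-2=\tfrac{4s-2t}{N-2s}$ and $\tfrac{2_s^*(t)}{2}=\tfrac{N-t}{N-2s}$) is correct and matches what is implicit in the paper.
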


\begin{proof}
Fix  $u\in\Sigma^t$. Then
$$ \bigg(\int_{\Rn}\frac{|u|^{2_s^*(t)}}{|x|^t}\;{\rm d}x\bigg)^{\frac{1}{2_s^*(t)}}=\frac{\|u\|_{\ga}^{\frac{2}{2_s^*(t)}}}{\bigg(\big(2_s^*(t)-1\big)\|K\|_{L^{\infty}(\Rn)}\bigg)^{\frac{1}{2_s^*(t)}}}.$$	
	Combining this with the definition of $S_{\ga,t,s}$ yields
$$\|u\|_{\ga} \geq S_{\ga,t,s}^{\frac{1}{2}}\bigg(\int_{\Rn}\frac{|u|^{2_s^*(t)}}{|x|^t}\;{\rm d}x\bigg)^{\frac{1}{2_s^*(t)}}=S_{\ga,t,s}^{\frac{1}{2}}\frac{\|u\|_{\ga}^{\frac{2}{2_s^*(t)}}}{\bigg(\big(2_s^*(t)-1\big)\|K\|_{L^{\infty}(\Rn)}\bigg)^{\frac{1}{2_s^*(t)}}} $$
for all $u\in\Sigma^t$.
From here using the definition of $C_t$, we conclude the
proof of the lemma.
\end{proof}

\begin{lemma}\lab{l:32}
	Assume that  $t\geq 0$, $C_t$ is given as in Theorem~$\ref{th:ex-f}$ and $c_{0}^t$, $c_1^t$ are defined as in \eqref{S31}. Further if
\be\label{S32}
\inf_{\stackrel{u\in\Hs}{\|u\|_{L^{2^*_s(t)}(\Rn, |x|^{-t})}}=1}\bigg\{C_t\|u\|_{\ga}^{\frac{N-2t+2s}{2s-t}}-\prescript{}{\hms}{\langle}f,u{\rangle}_{\dot{H}^s}\bigg\}>0,
	\ee
then $c_{0}^t<c_1^t$.
\end{lemma}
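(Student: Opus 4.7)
The plan is to establish $c_0^t<c_1^t$ in three main steps: (i) show $c_0^t<0$ strictly; (ii) for every $u\in\Sigma^t$, analyze the fibering map $g_u(\lambda):=I^\gamma_{K,t,f}(\lambda u)$ and use \eqref{S32} to extract a uniform positive lower bound on $g_u'(1)$; (iii) combine this derivative bound with coercivity of $I^\gamma_{K,t,f}$ on $\Sigma^t$ to obtain a quantitative gap $I^\gamma_{K,t,f}(u)\ge c_0^t+\eta$ with $\eta>0$ uniform on any minimizing sequence.

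For step (i), since $0\in\Sigma_1^t$ with $I^\gamma_{K,t,f}(0)=0$, one has $c_0^t\le 0$. Pick any $v\in\dot H^s(\R^N)$ with $v\ge 0$ and $\langle f,v\rangle>0$ (existing because $f\not\equiv 0$ is a nonnegative functional). For $\varepsilon>0$ small enough, $\psi_t(\varepsilon v)>0$ because the $\varepsilon^2$-term dominates the $\varepsilon^{2^*_s(t)}$-term, so $\varepsilon v\in\Sigma_1^t$; on the other hand, the linear term $-\varepsilon\langle f,v\rangle$ dominates the positive $O(\varepsilon^2)$-contribution, giving $I^\gamma_{K,t,f}(\varepsilon v)<0$. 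Hence $c_0^t<0$.

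For step (ii), a direct computation gives
\[
g_u'(1)=\|u\|_\gamma^2-\int_{\R^N}K(x)\frac{u_+^{2^*_s(t)}}{|x|^t}\,dx-\langle f,u\rangle.
\]
Using $u_+^{2^*_s(t)}\le|u|^{2^*_s(t)}$, $K\le\|K\|_{L^\infty}$, and the defining relation of $\Sigma^t$, one obtains $g_u'(1)\ge \tfrac{2^*_s(t)-2}{2^*_s(t)-1}\|u\|_\gamma^2-\langle f,u\rangle$. Now apply \eqref{S32} to the rescaled function $w:=u/\|u\|_{L^{2^*_s(t)}(\R^N,|x|^{-t})}$: by homogeneity, the inequality $C_t\|w\|_\gamma^{(N-2t+2s)/(2s-t)}-\langle f,w\rangle\ge\delta$ translates, after multiplying by $\|u\|_{L^{2^*_s(t)}(\R^N,|x|^{-t})}$ and invoking the $\Sigma^t$-relation (which yields exactly $C_t\|u\|_\gamma^{(N-2t+2s)/(2s-t)}\|u\|_{L^{2^*_s(t)}(\R^N,|x|^{-t})}^{-2^*_s(t)/(2^*_s(t)-2)}=\tfrac{2^*_s(t)-2}{2^*_s(t)-1}\|u\|_\gamma^2$), into
\[
\tfrac{2^*_s(t)-2}{2^*_s(t)-1}\|u\|_\gamma^2-\langle f,u\rangle\ge \delta\,\|u\|_{L^{2^*_s(t)}(\R^N,|x|^{-t})}\ge\delta r_0>0,
\]
where $r_0>0$ is the uniform lower bound on $\|u\|_{L^{2^*_s(t)}(\R^N,|x|^{-t})}$ over $\Sigma^t$ guaranteed by Remark~\ref{r:30-7-1} and Lemma~\ref{l31}. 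Hence $g_u'(1)\ge\delta r_0>0$ uniformly on $\Sigma^t$.

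For step (iii), the elementary estimate $|g_u''(\tau)|\le 2\|u\|_\gamma^2$ for $\tau$ near $1$ (again using the $\Sigma^t$-relation to bound $\int K|u|^{2^*_s(t)}/|x|^t$) allows one to choose $\lambda_u=1-\varepsilon_u$ with $\varepsilon_u\sim (r_0\delta)/\|u\|_\gamma^2$ so that $g_u'\ge r_0\delta/2$ on $[\lambda_u,1]$. This gives the quantitative gap $I^\gamma_{K,t,f}(u)-I^\gamma_{K,t,f}(\lambda_u u)\gtrsim (r_0\delta)^2/\|u\|_\gamma^2$. By Remark~\ref{r:30-7-2} the rescaled point $\lambda_u u$ lies in $\Sigma_1^t$, so $I^\gamma_{K,t,f}(\lambda_u u)\ge c_0^t$ and therefore $I^\gamma_{K,t,f}(u)\ge c_0^t+(r_0\delta)^2/(C\|u\|_\gamma^2)$. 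Finally, on any minimizing sequence $(u_n)\subset\Sigma^t$ for $c_1^t$ the bound $I^\gamma_{K,t,f}(u)\ge A\|u\|_\gamma^2-C'\|u\|_\gamma$ forces $\|u_n\|_\gamma$ to be bounded, and the uniform gap passes to the limit, yielding $c_1^t\ge c_0^t+\eta$ with $\eta>0$, which is the desired strict inequality. The main obstacle will be making the quadratic control on $g_u''$ sufficiently uniform on $\Sigma^t$ so that the gap $\eta$ survives the limit — the coercivity of $I^\gamma_{K,t,f}$ on $\Sigma^t$ and the fact that the passage to strict inequality only needs to be done along a \emph{minimizing} sequence (where $\|u_n\|_\gamma$ is automatically bounded) are what make this work.
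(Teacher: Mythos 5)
Your proof is correct and follows essentially the same route as the paper's: both extract from \eqref{S32}, via the defining relation of $\Sigma^t$ and the lower bound in Remark~\ref{r:30-7-1}, a uniform positive lower bound on the radial derivative of the energy at $p=1$ over $\Sigma^t$, and then convert this into a quantitative gap by controlling the second derivative along a (coercivity-bounded) minimizing sequence and comparing with $c_0^t$ using the fact that rescaling into $(0,1)$ lands in $\Sigma_1^t$. The only cosmetic difference is that you work directly with the fibering map of $I^\gamma_{K,t,f}$, whereas the paper introduces the comparison functional $\tilde J_t$ (replacing $K(x)u_+^{2^*_s(t)}$ by $\|K\|_{L^\infty}|u|^{2^*_s(t)}$) and transfers the gap via the monotonicity $\frac{d}{dp}I^\gamma_{K,t,f}(pu)\ge\frac{d}{dp}\tilde J_t(pu)$.
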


\begin{proof}
Define
\be\label{S36}
\tilde{J}_{t}(u):=\frac{1}{2}\|u\|_{\ga}^2-
\frac{\|K\|_{L^{\infty}}}{2_s^*(t)}\int_{\Rn}\frac{|u|^{2_s^*(t)}}{|x|^t}\;{\rm d}x-\prescript{}{\hms}{\langle}f,u{\rangle}_{\dot{H}^s}.
\ee
\underline{\bf Step I:} In this step we prove that there exists $\beta_t >0$ such that
$$\frac{d}{dp}\tilde{J}_{t}(pu)\bigg{|}_{p=1}\geq \beta_t \quad\mbox{for all }\,\, u\in\Sigma^t.$$
Indeed, using the definition of $\Si^t$ and the value of $C_t$,  we have for  $u\in \Si^t$
\begin{align}\label{S37}
\frac{d}{dp}\tilde{J}_{t}(pu)\bigg{|}_{p=1}\!\!\!&=\|u\|_{\ga}^2-\|K\|_{L^{\infty}(\Rn)}\int_{\Rn}\frac{|u|^{2_s^*(t)}}{|x|^t}\;{\rm d}x-\prescript{}{\hms}{\langle}f,u{\rangle}_{\dot{H}^s}\no\\
&=\bigg(1-\frac{1}{2_s^*(t)-1}\bigg)\|u\|_{\ga}^2
-\prescript{}{\hms}{\langle}f,u{\rangle}_{\dot{H}^s}\\	&=\frac{4s-2t}{N-2t+2s}\|u\|_{\ga}^2
-\prescript{}{\hms}{\langle}f,u{\rangle}_{\dot{H}^s}
= C_{t}\frac{\|u\|_{\ga}^\frac{N+2s-2t}{2s-t}}{\|u\|_{L^{2^*_s(t)}(\Rn, |x|^{-t})}^\frac{N-t}{2s-t}}-\prescript{}{\hms}{\langle}f,u{\rangle}_{\dot{H}^s}.\no
\end{align}
Furthermore, \eqref{S32} implies that there exists $d>0$ such that
\be\label{S38}
\inf_{\stackrel{u\in\Hs}{\|u\|_{L^{2^*_s(t)}(\Rn, |x|^{-t})}=1}}\bigg\{C_t\|u\|_{\ga}^{\frac{N-2t+2s}{2s-t}}
-\prescript{}{\hms}{\langle}f,u{\rangle}_{\dot{H}^s}\bigg\}\geq d.
\ee
	Observe that,
	\bea
\eqref{S38} &\iff& C_{t}\frac{\|u\|_{\ga}^{\frac{N+2s-2t}{2s-t}}}{\|u\|_{L^{2^*_s(t)}(\Rn, |x|^{-t})}^\frac{N-t}{2s-t}}-\prescript{}{\hms}{\langle}f,u{\rangle}_{\dot{H}^s}\geq d,\quad\int_{\Rn}\frac{|u|^{2_s^*(t)}}{|x|^t}\;{\rm d}x=1\no\\
	&\iff& C_{t}\frac{\|u\|_{\ga}^{\frac{N+2s-2t}{2s-t}}}{\|u\|_{L^{2^*_s(t)}(\Rn, |x|^{-t})}^\frac{N-t}{2s-t}}-\prescript{}{\hms}{\langle}f,u{\rangle}_{\dot{H}^s}\geq d\|u\|_{L^{2^*_s(t)}(\Rn, |x|^{-t})}, \quad u\in\Hs\setminus\{0\}.\no
	\eea
	Hence, plugging back the above estimate into \eqref{S37} and using Remark~\ref{r:30-7-1}, we complete the proof of Step I.
\medskip
	
\noindent\underline{\bf Step II:} Let $(u_n^t)_n$ be a minimizing sequence for $I_{K,t,f}^\ga$ on $\Sigma^t$, that is,
$$I_{K,t,f}^\ga(u_n^t)\to c_1^t\quad\mbox{and}\quad \|u_n^t\|_{\ga}^2=\|K\|_{L^{\infty}(\Rn)}\big(2_s^*(t)-1\big)
\displaystyle\int_{\Rn}\frac{|u_n^t|^{2_s^*(t)}}{|x|^t}{\rm d}x.$$
Therefore,
\begin{align*}
c_1^t+o(1)= I_{K,t,f}^\ga(u_n)\geq \tilde{J}_{t}(u_n^t)\geq \bigg(\frac{1}{2}-\frac{1}{2_s^*(t)\big(2_s^*(t)-1\big)}
\bigg)\|u_n^t\|_{\ga}^2-\|f\|_{\hms}\|u_n\|_{\ga}.
\end{align*}
This implies that $(\tilde{J}_{t}(u_n^t))_n$ is bounded and  $(\|u_n^t\|_{\ga})_n$, $(\|u_n^t\|_{L^{2_s^*(t)}(\Rn, |x|^{-t})})_n$ are bounded.\smallskip
	
\noindent\underline{\bf Claim:}  $c_0^t<0$ for all $t\geq 0$.
	
	To prove this claim, it is enough to show that there exists $v^t\in \Sigma_1^t$ such that $I_{K,t,f}^\ga(v^t)<0.$ Note that, thanks to Remark~\ref{r:30-7-2}, we can choose $u^t\in\Sigma^t$ such that $\prescript{}{\hms}{\langle}f,u^t{\rangle}_{\dot{H}^s}>0.$
	
	Therefore,
$$I_{K,t,f}^\ga(pu^t)= p^2\bigg[\frac{\big(2_s^*(t)-1\big)\|K\|_{L^{\infty}(\Rn)}}{2}-\frac{p^{2_s^*(t)-2}}{2_s^*(t)}\bigg]\int_{\Rn}\frac{|u^t|^{2_s^*(t)}}{|x|^t}\;{\rm d}x - p\prescript{}{\hms}{\langle}f,u^t{\rangle}_{\dot{H}^s}<0$$
	for $p<< 1$. Moreover, $pu^t\in\Sigma_1^t$ by Remark~\ref{r:30-7-2}. Hence the claim follows.
	
Thanks to the above claim, $I_{K,t,f}^\ga(u_n^t)<0$  for large $n$. Consequently,
\begin{align*}
0>I_{K,t,f}^\ga(u_n^t)\geq \bigg(\frac{1}{2}-\frac{1}{2_s^*(t)\big(2_s^*(t)-1\big)}\bigg)
\|u_n^t\|_{\ga}^2-\prescript{}{\hms}{\langle}f,u_n^t{\rangle}_{\dot{H}^s}
\end{align*}
for large $n$. This in turn implies that $\prescript{}{\hms}{\langle}f,u_n^t{\rangle}_{\dot{H}^s}>0$ for $n$ large enough. Hence, $\frac{d}{dp}\tilde{J}_{t}(pu_n^t)<0$ for $p>0$ small enough. Thus, by Step I  there exists $p_n^t\in (0,1)$ such that $\frac{d}{dp}\tilde{J}_{t}(p_n^tu_n^t)=0$.
	
Moreover,  it is easy to check that for all $u^t\in \Si^t$, the map $p\mapsto \frac{d}{dp}\tilde{J}_{t}(pu^t)$ is strictly increasing in $[0,1)$ and therefore, we can conclude that $p_n^t$ is unique.
\medskip
	
\noindent\underline{\bf Step III:}  In this step we show that
\be\label{S39} \liminf_{n\to\infty}\bigg\{\tilde{J}_{t}(u_n^t)-\tilde{J}_{t}(p_n^tu_n^t)\bigg\}>0.
\ee
We observe that $\tilde{J}_{t}(u_n^t)-\tilde{J}_{t}(p_n^tu_n^t)
=\displaystyle\int_{p_n^t}^{1}\frac{d}{dp}\tilde{J}_{t}(pu_n){\rm d}p$ and  that for all $n\in\mathbb{N}$ there exists $\xi_n^t>0$ such that $p_n^t\in(0,1-2\xi_n^t)$ and $\frac{d}{dp}\tilde{J}_{t}(pu_n^t)\geq \frac{\beta_t}{2}$ for $p\in [1-\xi_n^t,1].$

To establish \eqref{S39}, it is enough to show that $\xi_n^t>0$ can be chosen independently of $n\in\mathbb{N}$. This is possible, since $\frac{d}{dp}\tilde{J}_{t}(pu_n^t)\bigg{|}_{p=1}\geq\beta_t$, and   $(u_n^t)_n$ is bounded, so that for all $n$ and $p\in [0,1]$
\begin{align*}
\bigg{|}\frac{d^2}{dp^2}\tilde{J}_{t}(pu_n^t)\bigg{|}&= \bigg{|}\|u_n^t\|_{\ga}^2-\big(2_s^*(t)-1\big)\|K\|_{L^{\infty}}p^{2_s^*(t)-2}\int_{\Rn}\frac{|u_n^t|^{2_s^*(t)}}{|x|^t}\;{\rm d}x\bigg{|}\\
&=\bigg{|}\big(1-p^{2_s^*(t)-2}\big)\|u_n^t\|_{\ga}^2\bigg{|}\leq C.
\end{align*}

\noindent\underline{\bf Step IV:}  From the definition of $I_{K,t,f}^\ga$ and $\tilde{J}_{t}$, it immediately follows that $\frac{d}{dp}I_{K,t,f}^\ga(pu)\geq \frac{d}{dp}\tilde{J}_{t}(pu)$ for all $u\in\Hs$ and for all $p>0$. Hence,
$$I_{K,t,f}^\ga(u_n^t)-I_{K,t,f}^\ga(p_n^tu_n^t)=\int_{p_n^t}^{1}\frac{d}{dp}I_{K,t,f}^\ga(pu_n^t)\;{\rm d}p\geq \int_{p_n^t}^{1}\frac{d}{dp}\tilde{J}_{t}(pu_n^t)\;{\rm d}p = \tilde{J}_{t}(u_n^t)-\tilde{J}_{t}(p_n^tu_n^t).$$
Since $(u_n^t)_n\subset \Sigma^t$ is a minimizing sequence for $I_{K,t,f}^\ga$ on $\Sigma^t$ and $p_n^tu_n^t\in\Sigma_1^t$, then by \eqref{S39}
$$c_0^t= \inf_{\Sigma_1^t}I_{K,,t,f}^\ga(u)<\inf_{\Sigma^t}I_{K,t,f}^\ga(u)=c_1^t.$$
\end{proof}

\begin{proposition}\label{p:30-7-1}
	Assume that $t\geq 0$  and \eqref{S32} holds. Then $I_{K,t,f}^{\ga}$ has a critical point $u_t\in \Sigma_1^t$ with $I_{K,t,f}^{\ga}(u_t)=c_0^t.$ In particular, $u_t$ is a positive solution to \eqref{P}.
\end{proposition}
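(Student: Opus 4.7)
The strategy is constrained minimization on the open set $\Si_1^t$, using Theorem~\ref{th:PS} to cope with the lack of compactness. First I would verify that $I_{K,t,f}^{\ga}$ is bounded below and coercive on $\Si_1^t$: for every $u\in\Si_1^t$ the inequality $\psi_t(u)>0$ bounds $\|K\|_{L^{\infty}}\int_{\Rn}|u|^{2_s^*(t)}/|x|^t\,dx$ by $\|u\|_{\ga}^{2}/(2_s^*(t)-1)$, giving
$$I_{K,t,f}^{\ga}(u)\geq\bigg(\frac{1}{2}-\frac{1}{2_s^*(t)(2_s^*(t)-1)}\bigg)\|u\|_{\ga}^{2} - \frac{\|f\|_{\hms}}{\sqrt{1-\ga/\ga_{N,s}}}\|u\|_{\ga},$$
which is coercive because $2_s^*(t)>2$; moreover, the Claim inside Step~II of the proof of Lemma~\ref{l:32} already yields $c_0^t<0$.

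I would then apply Ekeland's variational principle to $\inf_{\Si_1^t}I_{K,t,f}^{\ga}$. The strict gap $c_0^t<c_1^t$ coming from Lemma~\ref{l:32} keeps the minimizing sequence at positive distance (measured by $\psi_t$) from the boundary $\Si^t=\partial\Si_1^t$, so Ekeland produces a genuine unconstrained Palais--Smale sequence $(u_n)_n\subset\Si_1^t$ with $\bar I_{K,t,f}^{\ga}(u_n)\to c_0^t$ and $(\bar I_{K,t,f}^{\ga})'(u_n)\to 0$ in $\hms$.

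Next I invoke Theorem~\ref{th:PS}: along a subsequence $u_n=\bar u+(\text{bubbles})+o(1)$ in $\Hs$, where the weak limit $\bar u$ solves \eqref{P1}, and
$$c_0^t=\bar I_{K,t,f}^{\ga}(\bar u)+\sum_{j=1}^{n_1}K(y^j)^{-\frac{N-2s}{2s}}\bar I_{1,0,0}^{0}(W^j)+\sum_{k=1}^{n_2}\bar I_{1,t,0}^{\ga}(W_{\ga,t}^{k})+o(1).$$
Each nontrivial bubble contributes at least the corresponding ground-state energy $\eta>0$ (of order $\frac{s}{N}S^{N/2s}$ or $\frac{2s-t}{2(N-t)}S_{\ga,t,s}^{(N-t)/(2s-t)}$). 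On the other hand, Remark~\ref{r:30-7-3} gives $\bar u\geq 0$, and testing \eqref{P1} against $\bar u$ produces the critical-point identity
$$\bar I_{K,t,f}^{\ga}(\bar u)=\bigg(\frac{1}{2}-\frac{1}{2_s^*(t)}\bigg)\|\bar u\|_{\ga}^{2}-\bigg(1-\frac{1}{2_s^*(t)}\bigg)\prescript{}{\hms}{\langle}f,\bar u{\rangle}_{\dot H^s}\geq -C\|f\|_{\hms}^{2},$$
the last bound following by completing the square. The hypothesis $\|f\|_{\hms}<C_t\sqrt{1-\ga/\ga_{N,s}}\,S_{\ga,t,s}^{(N-t)/(4s-2t)}$ of Theorem~\ref{th:ex-f} is tuned precisely so that $C\|f\|_{\hms}^{2}<\eta$, whence any bubble would force $c_0^t\geq\eta-C\|f\|_{\hms}^{2}>0$, contradicting $c_0^t<0$. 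Thus $n_1=n_2=0$, $u_n\to\bar u$ strongly in $\Hs$, and $\bar I_{K,t,f}^{\ga}(\bar u)=c_0^t$.

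Strong convergence combined with $\psi_t(u_n)>0$ yields $\psi_t(\bar u)\geq 0$, so $\bar u\in\overline{\Si_1^t}=\Si_1^t\cup\Si^t$; the boundary case $\bar u\in\Si^t$ is excluded by the strict inequality $c_0^t<c_1^t$. Therefore $u_t:=\bar u\in\Si_1^t$ is a critical point of $I_{K,t,f}^{\ga}$ realizing $c_0^t<0$, in particular $u_t\not\equiv 0$; Remark~\ref{r:30-7-3} then gives $u_t\geq 0$, and the fractional strong maximum principle cited there upgrades this to $u_t>0$, producing the desired positive solution of \eqref{P}. The main obstacle is the bubble-exclusion step, where the sharp Hardy--Sobolev constant $S_{\ga,t,s}$ enters both the lower bound on $\bar I_{K,t,f}^{\ga}(\bar u)$ and the ground-state threshold $\eta$; the precise form of the smallness hypothesis on $\|f\|_{\hms}$ is exactly what is needed to close this comparison.
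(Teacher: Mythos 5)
Your proposal follows the paper's outline — Ekeland's principle extracts a nonnegative, bounded $(PS)$ sequence in $\Sigma_1^t$, Theorem~\ref{th:PS} gives the profile decomposition, and the strict gap $c_0^t<c_1^t$ from Lemma~\ref{l:32} keeps the limit off $\Sigma^t$ — but your mechanism for excluding bubbles is genuinely different. The paper argues on the sign of $\psi_t$: since $K\ge1$, every profile in the decomposition lies in $\Sigma_2^t$, the weak limit $u_0$ satisfies $\psi_t(u_0)\le0$ because its energy falls below $c_0^t$ and so $u_0\notin\Sigma_1^t$, and a chain of asymptotic-orthogonality estimates for the Hardy-weighted inner products $\langle\cdot,\cdot\rangle_\gamma$ between distinct profiles shows that $\psi_t$ applied to the reconstituted sum equals the sum of the individual $\psi_t$'s up to $o(1)$, contradicting $\psi_t(u_n)>0$. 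You instead bound $I_{K,t,f}^{\ga}(\bar u)\ge-C\|f\|_{\hms}^2$ via the critical-point identity and a completion of the square, note that each bubble contributes a positive energy quantum, and conclude that a bubble would push $c_0^t$ above zero, contradicting $c_0^t<0$. This sidesteps the delicate $\langle\cdot,\cdot\rangle_\ga$ orthogonality estimates that occupy most of the paper's Step~3 — a genuine simplification. Two caveats: the $W^j$ bubble at $t=0$ actually contributes $K(y^j)^{-\frac{N-2s}{2s}}\frac{s}{N}S^{\frac{N}{2s}}$, which you should track explicitly (the comparison against $C\|f\|_{\hms}^2$ still closes because the same $\|K\|_{L^\infty}$ factor is built into $C_t$, but the bookkeeping is not automatic); and you invoke the quantitative bound on $\|f\|_{\hms}$ from Theorem~\ref{th:ex-f}, whereas Proposition~\ref{p:30-7-1} is stated under the a priori weaker hypothesis~\eqref{S32}, so as a standalone proof of the proposition your argument assumes slightly more than is claimed, though this is harmless for the main theorem.
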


\begin{proof}
	We divide the proof in few steps.

\noindent\underline{\bf Step 1:} In this step we show that $c_0^t>-\infty.$
	
	From the definition of $\tilde{J}_{t}$ in \eqref{S36},  we have $I_{K,t,f}^\ga(u)>\tilde{J}_{t}(u)$. Therefore, in order to prove Step 1, it is enough to show that $\tilde{J}_{t}$ is bounded from below. From
the definition of $\Sigma_1^t$,
\be\label{S310}
\tilde{J}_{t}(u)\geq \bigg(\frac{1}{2}-\frac{1}{2_s^*(t)\big(2_s^*(t)-1\big)}\bigg)
\|u\|_{\ga}^2-\|f\|_{\hms}\|u\|_{\ga}\quad\mbox{for all }\, u\in\Sigma_1^t.
	\ee
	As the RHS is a quadratic function in $\|u\|_{\ga}$, then $\tilde{J}_{t}$ is bounded from below and thus so is $I_{K,t,f}^\ga.$
\medskip
	
\noindent\underline{\bf Step 2:}  In this step we show that there exists a bounded nonnegative $(PS)$ sequence $(u_n^t)_n\subset \Sigma_1^t$ for $I_{K,t,f}^\ga$ at the level $c_{0}^t$. Let $(u_n^t)_n\subset\bar{\Sigma_1^t}$ such that $I_{K,t,f}^\ga(u_n^t)\to c_{0}^t$. Since Lemma~\ref{l:32} implies $c_0^t<c_1^t$,  without any restriction we can assume that $(u_n)_n\subset \Sigma_1^t$. Further, using Ekeland's variational principle, $(u_n^t)_n$ admits a  $(PS)$ subsequence, still called $(u_n^t)_n$, in $\Sigma_1^t$ for $I_{K,t,f}^\ga$ at the level $c_{0}^t$.   Moreover, as $I_{K,t,f}^\ga(u)\geq \tilde{J}_{t}(u)$, from \eqref{S310} it follows that $(u_n^t)_n$ is a bounded sequence in $\Hs$. Therefore, up to a subsequence, $u_n^t\rightharpoonup u_t$ in $\Hs$ and $u_n^t\to u_t$ a.e. in $\Rn$. In particular, $(u_n^t)_{+}\to (u_t)_{+}$ and $(u_n^t)_{-}\to (u_t)_{-}$ a.e. in $\Rn$.	Moreover, the fact that $f$ is a nonnegative functional
gives as $n\to\infty$
\begin{align*}
o(1)&=   \prescript{}{\hms}{\big\langle}(I_{K,t,f}^{\ga})'(u_n^t), (u_n^t)_{-}{\big\rangle}_{\dot H^s}\\
&=  \langle u_n^t,(u_n^t)_{-}\rangle_{\ga} -\int_{\Rn}\frac{K(x)(u_n^t)_{+}^{2_s^*(t)-1}(u_n^t)_{-}}{|x|^t}\;{\rm d}x -\prescript{}{\hms}{\langle}f, (u_n^t)_{-}{\rangle}_{\dot H^s}\\
&\leq  -\|(u_n^t)_{-}\|_{\ga}-\iint_{\R^{2N}}\frac{(u_n^t)_{-}(x)(u_n^t)_{+}(y)+(u_n^t)_{+}(x)(u_n^t)_{-}(y)}{|x-y|^{N+2s}}\;{\rm d}x{\rm d}y\\
&\leq -\|(u_n^t)_{-}\|_{\ga}^2.
\end{align*}
This implies that $(u_n)_{-}\to 0$ in $\Hs$ and so $(u_n)_{-}\to 0$ in a.e. in $\Rn$, which in turn yields that $(u_0)_{-}\equiv 0$, that is,  $u_0\geq 0$ a.e. in $\Rn$.
 Consequently, without loss of generality, we can assume $(u_n^t)_n$ is a nonnegative $(PS)$ sequence. This completes the proof of Step~2.
\medskip
	
\noindent\underline{\bf  Step 3:}  In this step we show that $u_n^t\to u_t$ in $\Hs$.

Applying Theorem~\ref{th:PS}, we get as $n\to\infty$
\be\lab{J5}
u_n^t= u_t+\sum_{j=1}^{n_1}K(y^j)^{-\frac{N-2s}{4s}}W^{r_n^j,\;y_n^j}+\sum_{k=1}^{n_2}(W_{\gamma,t}^k)^{R_n^k,0}+o(1), \quad\mbox{if }t=0,
\ee
and
\be\lab{J5"}
u_n^t= u_t+\sum_{k=1}^{n_2}(W_{\gamma,t}^k)^{R_n^k,0}+o(1),\quad\mbox{if }t>0,
\ee
where $(I_{K,t,f}^{\ga})'(u_t)=0$, $W$ is the unique positive solution of \eqref{W}, where $W_{\gamma,t}^k$, $k=1,2, \cdots n_2$ are positive ground state solutions of \eqref{Q}. Moreover, $(y_n^j)_n$,  $(r_n^j)_n$ and $(R_n^k)_n$ are some appropriate sequences with $R_n^k\to 0$ for each $k=1,\cdots n_2$,  $r_n^j\to 0$, $\frac{r_n^j}{y_n^j}\to 0$ and
either $y_n^j\to y^j$ or $|y_n^j|\to\infty$ and  for all $j=1,\cdots n_1$,
are appropriate sequences. To prove Step~3, we need to show that $n_1=0=n_2$. We prove this by the method of contradiction.

Suppose $t=0$. The case $t>0$ is comparatively easier and the proof of
that case will easily follow from arguments that we present in the case of $t=0$. Also for $t>0$, one can argue as in \cite[Proposition~3.1]{BCG}.
\vspace{2mm}

Thus, let us assume that $t=0$ and $u_n^t \not\to u_t$ in $\Hs$. For simplicity of notations, we denote $u_n^0$ by $u_n$. 
\vspace{2mm}

Then either $n_1\neq 0$ or $n_2\neq 0$ or both $n_1,\, n_2\neq 0$ in \eqref{J5}. Here we prove the last case that is when $n_1$ and $n_2$ both are non zero. If one of them is zero, that case is  again comparatively easier and argument in that case will follow from this case. First we observe that
\begin{align*}\lab{12-4-3}
\psi_0\bigg(K(y^j)^{-\frac{N-2s}{4s}}W^{r_n^j, y_n^j}\bigg)&=K(y^j)^{-\frac{N-2s}{2s}}\|W\|^2_{\ga}-(2^*_s-1)\|K\|_{L^\infty(\Rn)}K(y^j)^{-\frac{N}{2s}}
\|W\|_{L^{2^*_s}(\R^N)}^{2^*_s}\no\\
&=K(y^j)^{-\frac{N}{2s}}\bigg(K(y^j)-(2^*_s-1)
\|K\|_{L^\infty(\Rn)}\bigg)\|W\|_{L^{2^*_s}(\R^N)}^{2^*_s}\\
&\qquad\qquad -\ga K(y^j)^{-\frac{N-2s}{2s}}\int_{\Rn}\frac{|W|^2}{|x|^{2s}}{\rm d}x <0.\no
\end{align*}				
Similarly,
\begin{equation}\label{5-6-2}\begin{aligned}
\psi_0\big((W_{\gamma,0}^k)^{R_n^k,0}\big)=\psi_0(W_{\gamma,0}^k)&=\|W_{\gamma,0}^k\|_{\ga}^2-(2^*_s-1)\|K\|_{L^\infty(\Rn)}\|W_{\gamma,0}^k\|_{L^{2^*_s}(\R^N)}^{2^*_s}\no\\
&=\big(1-(2^*_s-1)\|K\|_{L^\infty(\Rn)}\big)\|W_{\gamma,0}^k\|_{\ga}^2<0.	
\end{aligned}\end{equation}	
Theorem~\ref{th:PS} gives
$$o(1)+c_0^0=I_{K,0,f}^{\ga}(u_n)\rightarrow  I_{K,0,f}^\ga(u_0)+\sum_{j=1}^{n_1}K(y^j)^{-\frac{N-2s}{2s}} I_{1,0,0}^{0}(W)+\sum_{k=1}^{n_2}I_{1,0,0}^{\gamma}(W_{\gamma,0}^{k}).$$
As $K>0$, $I_{1,0,0}^{0}(W)=\frac{s}{N}\|W\|^2_{\dot{H}^s}>0$
and $I_{1,0,0}^{\ga}(W_{\ga,0})=\frac{s}{N}\|W_{\ga,0}\|^2_{\ga}>0$, from the above expression we obtain $ I_{K,0,f}^\ga(u_0)<c_0^0$. This in turn yields $u_0\not\in \Si_1^0$ and
\be\lab{12-4-4} \psi_0(u_0)\leq 0.\ee 	
	Next, we evaluate $\psi_0\bigg(u_0 +\displaystyle\sum_{j=1}^{n_1}K(y^j)^{-\frac{N-2s}{4s}}W^{r_n^j,\;y_n^j}+\sum_{k=1}^{n_2}(W_{\gamma,0}^k)^{R_n^k,0}\bigg)$. Since $u_n\in \Si_1^0$, we have $\psi_0(u_n)\geq 0$. Therefore, the uniform continuity of $\psi_0$ and \eqref{J5}  imply
\be\label{J8}
0\leq \liminf_{n\rightarrow\infty}\psi_0(u_n)=\liminf_{n\rightarrow\infty} \psi_0\bigg(u_0 +\sum_{j=1}^{n_1}K(y^j)^{-\frac{N-2s}{4s}}W^{r_n^j, y_n^j}+\sum_{k=1}^{n_2}(W_{\gamma,0}^k)^{R_n^k,0}\bigg).
\ee
Since $u_0, \, W,\, {W_{\ga,0}^k}  \geq 0$  for all $k=1,\cdots, n_1$,
\begin{align}\lab{6-6-1}
 &\psi_0\bigg(u_0 +\sum_{j=1}^{n_1}K(y^j)^{-\frac{N-2s}{4s}}W^{r_n^j, y_n^j}+\sum_{k=1}^{n_2}(W_{\gamma,0}^k)^{R_n^k,0}\bigg)\no\\
 &\leq \|u_0\|^2_{\ga}+
 \sum_{k=1}^{n_2}\|W_{\gamma,0}^k\|^2_{\ga}
+\sum_{j=1}^{n_1}\|K(y^j)^{-\frac{N-2s}{4s}}W^{r_n^j, y_n^j}\|^2_{\ga}+ 2\big\langle u_0, \sum_{k=1}^{n_2}(W_{\gamma,0}^k)^{R_n^k,0} \big\rangle_{\ga} \no\\
&\quad+2\big\langle u_0, \sum_{j=1}^{n_1}K(y^j)^{-\frac{N-2s}{4s}}W^{r_n^j, y_n^j}\big\rangle_{\ga}+ 2\big\langle\sum_{k=1}^{n_2}(W_{\gamma,0}^k)^{R_n^k,0}, \sum_{j=1}^{n_1}K(y^j)^{-\frac{N-2s}{4s}}W^{r_n^j, y_n^j}\big\rangle_{\ga}\no\\
&\quad+\sum_{i, k=1, \, i\neq k}^{n_2}\big\langle(W_{\gamma,0}^i)^{R_n^i,0}, (W_{\gamma,0}^k)^{R_n^k,0}\big\rangle_{\ga}+\sum_{l, j=1, \, l\neq j}^{n_1}\big\langle K(y^l)^{-\frac{N-2s}{4s}}W^{r_n^l, y_n^l}, K(y^j)^{-\frac{N-2s}{4s}}W^{r_n^j, y_n^j}\big\rangle_{\ga}\no\\
&\quad-(2^*_s-1)\|K\|_{L^\infty(\mathbb R^N)}\bigg(\|u_0\|_{2^*_s}^{2^*_s}
+\sum_{j=1}^{n_1}\|K(y^j)^{-\frac{N-2s}{4s}}\big(W^{r_n^j, y_n^j}\big)\|_{2^*_s}^{2^*_s}+\sum_{k=1}^{n_2}
\|W_{\ga,0}^k\|_{2^*_s}^{2^*_s}\bigg)\no\\
 &\leq\psi_0(u_0)+\sum_{k=1}^{n_2}\psi_0(W_{\gamma,0}^k)+\sum_{j=1}^{n_1}\psi_0\big(K(y^j)^{-\frac{N-2s}{4s}}W^{r_n^j, y_n^j}\big)+\mbox{the above inner products}.
\end{align}	
We now prove that all the five inner products in the RHS of \eqref{6-6-1} approaches $0$ as $n\to\infty$. As $r_n^j\to 0$ and $\frac{|y_n^j|}{|r_n^j|}\to\infty$, it follows that $W^{r_n^j, y_n^j}\deb 0$ in $\Hs$ (see \cite[Lemma~3]{PS}) and $W^{r_n^j, y_n^j}\to 0$ a.e. in $\Rn$.
Choosing $R>0$ large enough as $n\to\infty$
\begin{align*}
\int_{\Rn}\frac{u_0W^{r_n^j, y_n^j}}{|x|^{2s}}{\rm d}x &\leq \int_{B_R}\frac{u_0W^{r_n^j, y_n^j}}{|x|^{2s}}{\rm d}x+\int_{|x|>R}\frac{u_0W^{r_n^j, y_n^j}}{|x|^{2s}}{\rm d}x\\
&\leq \int_{B_R}\frac{u_0W^{r_n^j, y_n^j}}{|x|^{2s}}{\rm d}x+\int_{|x|>R}\bigg(\frac{|u_0|^2}{|x|^{2s}}{\rm d}x\bigg)^\frac{1}{2}\int_{|x|>R}\bigg(\frac{|W|^2}{|x+\frac{y_n^j}{r_n^j}|^{2s}}{\rm d}x\bigg)^\frac{1}{2}\\
&=o(1),
\end{align*}
where in the 1st integral we have passed the limit using Vitali's convergence theorem via the H\"{o}lder inequality, while in the 2nd integral  simply using the Hardy inequality.
Therefore, as $n\to\infty$
\be\lab{6-6-2}
\big\langle u_0 ,\, K(y^j)^{-\frac{N-2s}{4s}}W^{r_n^j, y_n^j}\big\rangle_{\ga}=
K(y^j)^{-\frac{N-2s}{4s}}\bigg[\big\langle u_0 ,\, W^{r_n^j, y_n^j} \big\rangle_{\dot{H}^s}-\ga\int_{\Rn}\frac{u_0W^{r_n^j, y_n^j}}{|x|^{2s}}{\rm d}x\bigg]=o(1).
\ee
%
%
%
%
%
Since $R_n^k\to 0$ as $n\to\infty$, similarly we also have
\be\lab{6-6-3}
\big\langle u_0, \sum_{k=1}^{n_2}(W_{\gamma,0}^k)^{R_n^k,0} \big\rangle_{\ga}=o(1).
\ee
Now,
\begin{align*}
&\bigg\langle K(y^l)^{-\frac{N-2s}{4s}}\big(W^{r_n^l, y_n^l}\big) ,K(y^j)^{-\frac{N-2s}{4s}}\big(W^{r_n^j, y_n^j}\big) \bigg\rangle_{\ga}\\
&\;=K(y^l)^{-\frac{N-2s}{4s}}K(y^j)^{-\frac{N-2s}{4s}}(r_n^l)^{-\frac{N-2s}{2}}(r_n^j)^{-\frac{N-2s}{2}}\no\\
&\qquad\times\bigg[\iint_{\R^{2N}}\frac{\big(W(\frac{x-y_n^l}{r_n^l})-W(\frac{y-y_n^l}{r_n^l})\big)\big(W(\frac{x-y_n^j}{r_n^j})-W(\frac{y-y_n^j}{r_n^j})\big)}{|x-y|^{N+2s}}{\rm d}x{\rm d}y\\
&\qquad\qquad\qquad\qquad\quad-\ga\int_{\Rn}\frac{W(\frac{x-y_n^l}{r_n^l})W(\frac{x-y_n^j}{r_n^j})}{|x|^{2s}}{\rm d}x\bigg]\\
&\;=K(y^l)^{-\frac{N-2s}{4s}}K(y^j)^{-\frac{N-2s}{4s}}(r_n^l)^\frac{N-2s}{2}(r_n^j)^{-\frac{N-2s}{2}}\no\\
&\qquad\times\bigg[\iint_{\R^{2N}}\frac{\big(W(x)-W(y)\big)\big(W(\frac{r_n^lx+y_n^l-y_n^j}{r_n^j})-W(\frac{r_n^ly+y_n^l-y_n^j}{r_n^j})\big)}{|x-y|^{N+2s}}{\rm d}x{\rm d}y\\
&\qquad\qquad\qquad\qquad\quad-\ga\int_{\Rn}\frac{W(x)W(\frac{r_n^ly+y_n^l-y_n^j}{r_n^j})}{|x+\frac{y_n^l}{r_n^l}|^{2s}}{\rm d}x\bigg]\\
&=K(y^l)^{-\frac{N-2s}{4s}}K(y^j)^{-\frac{N-2s}{4s}}\bigg[\big\langle W, W_n\big\rangle_{\Hs} -\ga\int_{\Rn}\frac{WW_n}{|x+\frac{y_n^l}{r_n^l}|^{2s}}{\rm d}x\bigg],
\end{align*}
where $W_n:=(\frac{r_n^l}{r_n^j})^\frac{N-2s}{2}W\big(\frac{r_n^l}{r_n^j}x+\frac{y_n^l-y_n^j}{r_n^j}\big)$. Theorem~\ref{th:PS} $(vi)$ yields
$$\bigg|\log\big(\frac{r^l_n}{r_n^j}\big)\bigg|+\bigg|\frac{y_n^l-y_n^j}{r_n^j}\bigg|\To\infty.$$
Thus $W_n\deb 0$ in $\Hs$ (see \cite[Lemma~3]{PS}). Hence,
as $n\to\infty$
\be\lab{6-6-4}
\bigg\langle K(y^l)^{-\frac{N-2s}{4s}}\big(W^{r_n^l, y_n^l}\big) ,K(y^j)^{-\frac{N-2s}{4s}}\big(W^{r_n^j, y_n^j}\big)\bigg\rangle_{\ga}=o(1).
\ee
Similarly,
\be\lab{6-6-5}
\big\langle(W_{\gamma,0}^i)^{R_n^i,0}, (W_{\gamma,0}^k)^{R_n^k,0}\big\rangle_{\ga}=o(1)
\ee
as $|\log \frac{R_n^j}{R_n^k}|\to\infty$.

Finally, we estimate $\big\langle(W_{\gamma,0}^k)^{R_n^k,0}, K(y^j)^{-\frac{N-2s}{4s}}W^{r_n^j, y_n^j}\big\rangle_{\ga}$. First we note that  $|\log \frac{R_n^j}{R_n^k}|\to\infty$ implies that either $\frac{R_n^j}{R_n^k}\to 0$ or $\frac{R_n^j}{R_n^k}\to \infty$. Suppose $\frac{R_n^j}{R_n^k}\to 0$. Then
\begin{align*}
\big\langle(W_{\gamma,0}^k)^{R_n^k,0}, K(y^j)^{-\frac{N-2s}{4s}}W^{r_n^j, y_n^j}\big\rangle_{\ga}
&=K(y^j)^{-\frac{N-2s}{4s}}(R_n^k)^\frac{N-2s}{2}(r_n^j)^{-\frac{N-2s}{2}}\cdot\\
&\;\;\times\bigg[\iint_{\R^{2N}}\!\!\!\!\frac{\big(W^k_{\ga,0}(x)-W^k_{\ga,0}(y)\big)
\big(W(\frac{R_n^kx-y_n^j}{r_n^j})
-W(\frac{R_n^ky-y_n^j}{r_n^j})\big)}{|x-y|^{N+2s}}{\rm d}x{\rm d}y\\
&\qquad\qquad\qquad\qquad\quad-\ga\int_{\Rn}\frac{W^k_{\ga,0}(x)
W(\frac{R_n^kx-y_n^j}{r_n^j})}{|x|^{2s}}{\rm d}x\bigg]\\
&=K(y^j)^{-\frac{N-2s}{4s}}\bigg[\big\langle W^k_{\ga,0}, W^n\big\rangle_{\Hs} -\ga\int_{\Rn}\frac{W^k_{\ga,0}W^n}{|x|^{2s}}{\rm d}x\bigg],
\end{align*}
where $W^n:=(\frac{r_n^j}{R_n^k})^{-\frac{N-2s}{2}}
W\bigg(\frac{x-\frac{y_n^j}{R_n^k}}{r_n^j/R_n^k}\bigg)$. The proof of Theorem~\ref{th:PS} gives $\frac{r_n^j}{R_n^k}=\frac{s_n^jR_n^j}{R_n^k}$
for any $j$ and $k$. As $s_n^j\to 0$ and $\frac{R_n^j}{R_n^k}\to 0$, we have $\frac{r_n^j}{R_n^k}\to 0$. Moreover, $\frac{|y_n^j|}{r_n^j}\to\infty$ implies that $\frac{|y_n^j/ R_n^k|}{r_n^j/R_n^k}\to\infty$. Thus $|\log \frac{r_n^j}{R_n^k}|+|y_n^j/ R_n^k|\to\infty$. Consequently by \cite[Lemma 3]{PS}, $W^n\deb 0$ in $\Hs$. Hence, an argument similar to \eqref{6-6-2} yields
\be\lab{6-6-6}
\big\langle(W_{\gamma,0}^k)^{R_n^k,0}, K(y^j)^{-\frac{N-2s}{4s}}W^{r_n^j, y_n^j}\big\rangle_{\ga}=o(1).
\ee
On the other hand, if $\frac{R_n^j}{R_n^k}\to \infty$ then $\frac{R_n^k}{R_n^j}\to 0$. Then similarly, we also show that
$$\big\langle(W_{\gamma,0}^k)^{R_n^k,0}, K(y^j)^{-\frac{N-2s}{4s}}W^{r_n^j, y_n^j}\big\rangle_{\Hs}=K(y^j)^{-\frac{N-2s}{4s}}\langle W^n_\ga, W\rangle,$$
where $W^n_\ga(x)=\big(\frac{R_n^k}{R_n^j}\big)^{-\frac{N-2s}{2s}}W_{\ga,0}^k\bigg(\frac{x-\frac{y^j_n}{r_n^j}}{R^k_n/r_n^j}\bigg)$. Since $\frac{R_n^k}{R_n^j}\to 0$ and $\frac{|y^j_n|}{r_n^j}\to \infty$, again applying  \cite[Lemma~3]{PS}, we get $W_{\ga}^n\deb 0$ in $\Hs$. Hence, in any case \eqref{6-6-6} holds.

Combining \eqref{6-6-2}--\eqref{6-6-6} along with \eqref{6-6-1}, we have
$$\psi_0\bigg(u_0 +\sum_{j=1}^{n_1}K(y^j)^{-\frac{N-2s}{4s}}W^{r_n^j, y_n^j}+\sum_{k=1}^{n_2}(W_{\gamma,0}^k)^{R_n^k,0}\bigg)<0.$$
This contradicts \eqref{J8}. Therefore, $n_1=0$ and $n_2=0$ in \eqref{J5}. Hence, $u_n\to u_0$ in $\Hs$. Consequently, $\psi_0(u_n)\to \psi_0(u_0)$, which in turn implies that $u_0\in \bar \Si_1^t$. But, since $c_0^0<c_1^0$, we conclude$u_0\in \Si_1^t$.    Hence Step~3 follows.
\end{proof}

\begin{proposition}\label{p:2-6-1}
	Assume that $t\geq 0$  and \eqref{S32} holds. Then $I_{K,t,f}^{\ga}$ has a second critical point $v_t\neq u_t$. In particular, $v_t$ solves \eqref{P}.
\end{proposition}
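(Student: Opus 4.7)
The plan is to obtain $v_t$ via a min--max construction at a level strictly above $c_0^t$, exploiting the fact that $u_t\in\Sigma_1^t$ is the global minimum of $I_{K,t,f}^\ga$ on $\Sigma_1^t$ with $c_0^t<c_1^t$ (Lemma~\ref{l:32}). Since $\Sigma^t$ separates $\Sigma_1^t$ from $\Sigma_2^t$, every continuous path joining $u_t$ to a point of $\Sigma_2^t$ must traverse $\Sigma^t$ and hence attain energy at least $c_1^t$. Setting
\[
\Gamma:=\bigl\{\sigma\in C([0,1];\Hs):\sigma(0)=u_t,\ \sigma(1)\in\Sigma_2^t,\ I_{K,t,f}^\ga(\sigma(1))<c_0^t\bigr\},\ \kappa_t:=\inf_{\sigma\in\Gamma}\max_{\tau\in[0,1]}I_{K,t,f}^\ga(\sigma(\tau)),
\]
yields $\kappa_t\ge c_1^t>c_0^t$, so any critical point produced at level $\kappa_t$ is automatically distinct from $u_t$. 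A concrete admissible path is $\tau\mapsto u_t+\tau T\Phi_\eps$ with $\Phi_\eps$ a suitably scaled bubble: for $t>0$ the Hardy--Sobolev ground state $W_{\ga,t}^{\eps,0}$ with $\eps\to 0^+$; for $t=0$ either $W_{\ga,0}^{\eps,0}$ or an Aubin--Talenti profile $W^{\eps,y_\eps}$ with $|y_\eps|\to\infty$ so that $K(y_\eps)\to 1$. With $T$ large the endpoint lies in $\Sigma_2^t$ at very negative energy, while the weak convergence $\Phi_\eps\rightharpoonup 0$ in $\Hs$ permits an expansion of $\|u_t+\tau T\Phi_\eps\|_\ga^2$, $\int K|u_t+\tau T\Phi_\eps|^{2_s^*(t)}/|x|^t$, and $\prescript{}{\hms}{\langle}f,u_t+\tau T\Phi_\eps\rangle_{\dot H^s}$ via Brezis--Lieb type identities.

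The heart of the argument, and the main obstacle, is to upgrade this construction to the strict inequality
\[
\kappa_t<c_0^t+E_\ast,\qquad E_\ast:=\min\{\text{bubble energies permitted by Theorem~\ref{th:PS}}\}.
\]
For $t>0$ only the Hardy--Sobolev bubble occurs and $E_\ast=\tfrac{2s-t}{2(N-t)}S_{\ga,t,s}^{(N-t)/(2s-t)}$. For $t=0$ two profiles compete; the supplementary hypothesis $\|K\|_{L^\infty}<(S/S_{\ga,0,s})^{N/(N-2s)}$ is exactly what forces $\tfrac{s}{N}S_{\ga,0,s}^{N/(2s)}<\|K\|_\infty^{-(N-2s)/(2s)}\tfrac{s}{N}S^{N/(2s)}$, so that $E_\ast=\tfrac{s}{N}S_{\ga,0,s}^{N/(2s)}$ and both kinds of bubbles can be excluded simultaneously. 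The strict inequality is extracted from a cross--term analysis between $u_t$ and $\Phi_\eps$ in the spirit of Brezis--Nirenberg; the smallness of $\|f\|_{\hms}$ postulated in Theorem~\ref{th:ex-f} is precisely what makes the interaction correction push the maximum along the path strictly below the threshold.

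Once the bound $c_0^t<\kappa_t<c_0^t+E_\ast$ is secured, Ekeland's principle applied to the class $\Gamma$ delivers a Palais--Smale sequence $(v_n)_n$ at level $\kappa_t$. Theorem~\ref{th:PS} decomposes $v_n=\bar v+\sum(\text{bubbles})+o(1)$ with $\bar v$ a solution of~\eqref{P1} and $\kappa_t=I_{K,t,f}^\ga(\bar v)+\sum(\text{bubble energies})$. Arguing as in Step~3 of Proposition~\ref{p:30-7-1} (testing $(I_{K,t,f}^\ga)'(v_n)$ against $(v_n)_-$ to pass nonnegativity to the limit, then using $\bar v\in\overline{\Sigma_1^t}$ so that $I_{K,t,f}^\ga(\bar v)\ge c_0^t$), the strict bound $\kappa_t<c_0^t+E_\ast$ rules out every bubble, whence $v_n\to\bar v$ strongly in $\Hs$ and $I_{K,t,f}^\ga(\bar v)=\kappa_t>c_0^t=I_{K,t,f}^\ga(u_t)$. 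Therefore $v_t:=\bar v\ne u_t$, and the strong maximum principle recalled in Remark~\ref{r:30-7-3} upgrades $v_t$ to a positive solution of~\eqref{P}.
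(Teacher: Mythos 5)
Your overall skeleton follows the paper's: a mountain--pass level $\kappa_t$ with $c_0^t<\kappa_t<c_0^t+E_\ast$, followed by the profile decomposition of Theorem~\ref{th:PS} to rule out bubbles; and your reading of the extra hypothesis $\|K\|_\infty<(S/S_{\gamma,0,s})^{N/(N-2s)}$ for $t=0$, as forcing the Hardy--Sobolev threshold to be the smaller of the two thresholds, is exactly the paper's. However, the way you propose to obtain the strict bound $\kappa_t<c_0^t+E_\ast$ is genuinely different from the paper's and, as described, does not close.

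The paper never runs a concentrating family $\Phi_\eps$ with $\eps\to 0^+$. It picks $\bar x_0$ realizing $\|K\|_\infty$, sets $w_{\gamma,t}^\tau(x):=W_{\gamma,t}(x/\tau)$ (a spreading scaling, $\|w_{\gamma,t}^\tau\|_\gamma^2=\tau^{N-2s}\|W_{\gamma,t}\|_\gamma^2\to\infty$, which is what lands in $\Sigma_2^t$), and proves the strict subadditivity
\[
I_{K,t,f}^{\gamma}\big(u_t+K(\bar x_0)^{-\frac{N-2s}{4s}}w_{\gamma,t}^\tau\big)<I_{K,t,f}^{\gamma}(u_t)+I_{1,t,0}^{\gamma}\big(K(\bar x_0)^{-\frac{N-2s}{4s}}w_{\gamma,t}^\tau\big)
\]
for \emph{every} $\tau>0$, by testing the equation solved by $u_t$ against the perturbation, using $K\geq1$, $f\geq0$, and the elementary strict inequality $(a+b)^p>a^p+pa^{p-1}b+b^p$ for $a,b>0$, $p>2$. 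Since $\sup_{\tau>0}I_{1,t,0}^{\gamma}(K(\bar x_0)^{-\frac{N-2s}{4s}}w_{\gamma,t}^\tau)=I_{1,t,0}^{\gamma}(W_{\gamma,t})$, the single path $r\mapsto u_t+K(\bar x_0)^{-\frac{N-2s}{4s}}w_{\gamma,t}^{r\tau_0}$ produces $\kappa_t<c_0^t+E_\ast$ at once, with no limiting procedure and no rate estimates. Your proposal misattributes the strict inequality to the smallness of $\|f\|_{(\dot H^s)'}$; in the paper that smallness enters only through Lemma~\ref{l:J1.3} to verify~\eqref{S32} (hence $c_0^t<c_1^t$ and the existence of $u_t$), whereas the subadditivity above uses only $f\geq0$. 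Moreover, if you take $\Phi_\eps\rightharpoonup0$ as $\eps\to 0^+$, the cross-terms $\langle u_t,\Phi_\eps\rangle_\gamma$ and $\int K u_t^{2_s^*(t)-1}\Phi_\eps|x|^{-t}$ vanish in the limit, so a plain Br\'ezis--Lieb expansion yields only $\kappa_t\leq c_0^t+E_\ast$, not the strict inequality; upgrading this requires a quantitative rate comparison in the Br\'ezis--Nirenberg spirit, which you identify as ``the main obstacle'' but do not carry out, and which is especially delicate when $t>0$ since the Hardy--Sobolev bubble $W_{\gamma,t}^{\eps,0}$ concentrates at the origin, exactly where $u_t$ itself is singular. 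The paper's algebraic subadditivity sidesteps all of this.
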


\begin{proof}
	
	Let $t\geq 0$  and let $u_t$ be the critical point of $I_{K,t,f}^{\ga}$ obtained in Proposition~\ref{p:30-7-1}.
Let $W_{\ga,t}$ be a positive radial  ground state solution of~\eqref{Q}. Set $w_{\ga, t}^{\tau}(x):=W_{\ga,t}(\frac{x}{\tau})$. Let $\x\in\Rn$ be such that $K(\x)=\|K\|_{L^{\infty}(\Rn)}.$\smallskip

\noindent\underline{\bf Claim 1:} $u_t+K(\x)^{-\frac{N-2s}{4s}}\w\in\Sigma_2^t$ for $\tau >0$ large enough.
	
Indeed, as $\|K\|_{L^{\infty}(\Rn)}\geq 1$, $0\leq t<2s$ and $u_t,\;\w>0,$ using Cauchy's inequality, with $\varepsilon>0$, we have
\begin{align*}
\psi_t\Big(u_t+K(\x)^{-\frac{N-2s}{4s}}\w\Big) &= \|u_t+K(\x)^{-\frac{N-2s}{4s}}\w\|_{\ga}^2\\ &-\big(2_s^*(t)-1\big)K(\x)\int_{\Rn}\frac{|u_t+K(\x)^{-\frac{N-2s}{4s}}\w|^{2_s^*(t)}}{|x|^t}\;{\rm d}x\\ &\leq\|u_t\|_{\ga}^2+K(\x)^{-\frac{N-2s}{2s}}\|\w\|_{\ga}^2+2K(\x)^{-\frac{N-2s}{4s}}\langle u_t,\w\rangle_{\ga}\\
& -\big(2_s^*(t)-1\big)
\bigg\{\int_{\Rn}\frac{|u_t|^{2_s^*(t)}}{|x|^t}\;{\rm d}x + K(\x)^{-\frac{N-t}{2s}}\int_{\Rn}\frac{|\w|^{2_s^*(t)}}{|x|^t}\;{\rm d}x\bigg\}\\
&\leq\|u_t\|_{\ga}^2+K(\x)^{-\frac{N-2s}{2s}}\|\w\|_{\ga}^2+2K(\x)^{-\frac{N-2s}{4s}}\big(\frac{\varepsilon}{2}\|\w\|_{\ga}^2+\frac{1}{2\varepsilon}\|u_t\|_{\ga}^2\big)\\
&\;\; -\big(2_s^*(t)-1\big)\bigg\{\int_{\Rn}\!\!
\frac{|u_t|^{2_s^*(t)}}{|x|^t}{\rm d}x + K(\x)^{-\frac{N-t}{2s}}\tau^{N-t}\int_{\Rn}
\!\!\frac{|W_{\ga,t}|^{2_s^*(t)}}{|x|^t}{\rm d}x\bigg\}\\
&=\big(1+\frac{1}{\varepsilon}\big)\|u_t\|^2_{\Hs}-(2^*_s(t)-1) \|u_t\|^{2^*_s(t)}_{L^{2^*_s(t)}(\Rn, |x|^{-t})}\\
&\;\;+\|W_{\ga,t}\|_{\ga}^2\bigg[(1+\var)\tau^{N-2s}-\big(2_s^*(t)-1\big)K(\x)^{-\frac{N-t}{2s}}\tau^{N-t}\bigg]\\
&<0\quad\mbox{ for\,  $\tau>0$ large enough. }
\end{align*}
Therefore, $u_t+K(\x)^{-\frac{N-2s}{4s}}\w\in\Sigma_2^t$ for $\tau>0$ large enough. Hence, Claim $1$ follows.\smallskip
	
\noindent\underline{\bf Claim 2:} $I_{K,t,f}^{\ga}\Big(u_t+K(\x)^{-\frac{N-2s}{4s}}\w\Big)<I_{K,t,f}^{\ga}(u_t)+I_{1,t,0}^{\ga}\Big(K(\x)^{-\frac{N-2s}{4s}}\w\Big)$ for all $\tau>0.$\\
	Indeed, as $u_t,\;\w>0$ taking $K(\x)^{-\frac{N-2s}{4s}}\w$ as the test function in \eqref{P}, we get
\begin{equation}\label{S33}\begin{aligned}
\langle u_t,K(\x)^{-\frac{N-2s}{4s}}\w\rangle_{\ga} = K(\x)^{-\frac{N-2s}{4s}}\int_{\Rn}&K(x)\frac{u_t^{2_s^*(t)-1}\w}{|x|^t} {\rm d}x\\
&+\prescript{}{\hms}{\langle}f,K(\x)^{-\frac{N-2s}{4s}}\w{\rangle}_{\dot{H}^s}.
\end{aligned}\end{equation}
Therefore, using the above equality together with the fact that $K\geq 1$ yields
\begin{align*}
I_{K,t,f}^{\ga}\Big(u_t+K(\x)^{-\frac{N-2s}{4s}}\w\Big)&= \frac{1}{2}\|u_t\|_{\ga}^2+\frac{1}{2}K(\x)^{-\frac{N-2s}{2s}}\|\w\|_{\ga}^2+K(\x)^{-\frac{N-2s}{4s}}\langle u_t,\w\rangle_{\ga}\\
&\;\;\;-\frac{1}{2_s^*(t)}\int_{\Rn}K(x)\frac{\Big(u_t+K(\x)^{-\frac{N-2s}{4s}}\w\Big)^{2_s^*(t)}}{|x|^t}\;{\rm d}x\\ &\;\;-\prescript{}{\hms}{\langle}f,u_t{\rangle}_{H^s}-K(\x)^{-\frac{N-2s}{4s}}\prescript{}{\hms}{\langle}f,\w{\rangle}_{\dot{H}^s}\\
&= I_{K,t,f}^{\ga}(u_t)+I_{1,t,0}^{\ga}\Big(K(\x)^{-\frac{N-2s}{4s}}\w\Big)+K(\x)^{-\frac{N-2s}{4s}}\langle u_t,\w\rangle_{\ga}\\
&\;\;+\frac{1}{2_s^*(t)}\int_{\Rn}K(x)\frac{u_t^{2_s^*(t)}}{|x|^t}\;{\rm d}x+\frac{K(\x)^{-\frac{N-t}{2s}}}{2_s^*(t)}\int_{\Rn}\frac{(\w) ^{2_s^*(t)}}{|x|^t}\;{\rm d}x\\
&\;\;-\frac{1}{2_s^*(t)}\int_{\Rn}K(x)\frac{\Big(u_t+K(\x)^{-\frac{N-2s}{4s}}\w\Big)^{2_s^*(t)}}{|x|^t}\;{\rm d}x\\
&\;\;-K(\x)^{-\frac{N-2s}{4s}}\prescript{}{\hms}{\langle}f,\w{\rangle}_{\dot{H}^s}\\
&\leq I_{K,t,f}^{\ga}(u_t)+I_{1,t,0}^{\ga}\Big(K(\x)^{-\frac{N-2s}{4s}}\w\Big)\\
&\;\;+K(\x)^{-\frac{N-2s}{4s}}\int_{\Rn}K(x)\frac{u_t^{2_s^*(t)-1}\w}{|x|^t}\;{\rm d}x +\frac{1}{2_s^*(t)}\int_{\Rn}K(x)\frac{u_t^{2_s^*(t)}}{|x|^t}\;{\rm d}x\\
&\;\;+\frac{K(\x)^{-\frac{N-t}{2s}}}{2_s^*(t)}\int_{\Rn}\frac{(\w) ^{2_s^*(t)}}{|x|^t}\;{\rm d}x\\
&\;\;-\frac{1}{2_s^*(t)}\int_{\Rn}K(x)\frac{\Big(u_t+K(\x)^{-\frac{N-2s}{4s}}\w\Big)^{2_s^*(t)}}{|x|^t}\;{\rm d}x\\
&\leq I_{K,t,f}^{\ga}(u_t)+I_{1,t,0}^{\ga}\Big(K(\x)^{-\frac{N-2s}{4s}}\w\Big)\\
&\;\;+\frac{1}{2_s^*(t)}\int_{\Rn}K(x)\bigg[2_s^*(t)K(\x)^{-\frac{N-2s}{4s}}\frac{u_t^{2_s^*(t)-1}\w}{|x|^t}\\
&\;\;+\frac{u_t^{2_s^*(t)}}{|x|^t}+K(\x)^{-\frac{N-t}{2s}}\frac{(\w)^{2_s^*(t)}}{|x|^t}-\frac{\Big(u_t+K(\x)^{-\frac{N-2s}{4s}}\w\Big)^{2_s^*(t)}}{|x|^t}\bigg]\;{\rm d}x\\
&<I_{K,t,f}^{\ga}(u_t)+I_{1,t,0}^{\ga}\Big(K(\x)^{-\frac{N-2s}{4s}}\w\Big).
\end{align*}
Hence the claim follows.
As $$\|\w\|_{\ga}^2=\tau^{N-2s}\|W_{\ga,t}\|_{\ga}^2, \quad \|\w\|_{L^{2^*_s(t)}(\Rn, |x|^{-t})}^{2^*_s(t)}=\tau^{N}\|W_{\ga,t}\|_{\ga}^2,$$ and $0\leq t<2s<N$, it is easy to see using the definition of
$I_{1,t,0}^{\ga}\Big(K(\x)^{-\frac{N-2s}{4s}}\w\Big)$ that
\be\label{S34}
	\lim_{\tau\to \infty}	I_{1,t,0}^{\ga}\Big(K(\x)^{-\frac{N-2s}{4s}}\w\Big) = -\infty
	\ee
Consequently, a straight forward computation yields that
$$\sup_{\tau>0}I_{1,t,0}^{\ga}\Big(K(x_0)^{-\frac{N-2s}{4s}}\w\Big) = I_{1,t,0}^{\ga}\Big(K(\x)^{-\frac{N-2s}{4s}}w_{\ga,t}^{\tau_{\max}}\Big), \quad\mbox{where}\,\, \tau_{\max}=K(\x)^{\tfrac{1}{2s}}.$$
Therefore, substituting the value of $\tau_{\max}$ in the definition of $I_{1,t,0}^\ga$, it is not difficult to check that
$$\sup_{\tau>0}I_{1,t,0}^{\ga}\Big(K(x_0)^{-\frac{N-2s}{4s}}\w\Big)=I_{1,t,0}^{\ga}(W_{\ga,t}).$$
Combining the above relation with Claim $2$ and \eqref{S34}, we obtain
\begin{gather}\label{S35}
  I_{K,t,f}^{\ga}\Big(u_t+K(\x)^{-\frac{N-2s}{4s}}\w\Big)<I_{K,t,f}^{\ga}(u_t)+I_{1,t,0}^{\ga}(W_{\ga,t})\quad \mbox{ for all }\tau>0,\\
\label{S36'}
I_{K,t,f}^{\ga}\Big(u_t+K(\x)^{-\frac{N-2s}{4s}}\w\Big)<I_{K,t,f}^{\ga}(u_t)\quad \mbox{ for } \tau  \mbox{ large enough}.
\end{gather}
Now, fix $\tau_0>0$ large enough such that Claim $1$ and \eqref{S36'} are satisfied. Set
 $$\kappa_t:=\inf_{\theta\in\Theta_t}\max_{r\in[0,1]}
 I_{K,t,f}^{\ga}\Big(\theta(r)\Big),$$
where $\Theta_t :=\bigg\{\theta\in C\Big([0,1],\Hs\Big)\;:\;\theta(0)=u_t,\;\,\theta(1)
=u_t+K(\x)^{-\frac{N-2s}{4s}}w^{\tau_0}_{\ga,t}\bigg\}$.
As $u_t\in\Sigma_1^t$, $u_t+K(\x)^{-\frac{N-2s}{4s}}w^{\tau_0}_{\ga,t}\in\Sigma_2^t$ for every $\theta\in \Theta_t$, there exists  $r_{\theta}\in (0,1)$ such that $\theta(r_{\theta})\in\Sigma^t$. Thus
	$$\max_{r\in[0,1]}I_{K,t,f}^{\ga}(\theta(r))\geq I_{K,t,f}^{\ga}(\theta(r_{\theta}))\geq \inf_{u\in\Sigma^t}I_{K,t,f}^{\ga}(u)=c_1^t.$$
Hence, $$\kappa_t \geq c_1^t>c_0^t=I_{K,t,f}^{\ga}(u_t).$$
	Here in the last inequality we have used Lemma~\ref{l:32}.
\smallskip
	
\noindent\underline{\bf Claim 3:} $\kappa_t<I_{K,t,f}^{\ga}(u_t)+I_{1,t,0}^{\ga}(W_{\ga,t}).$\\
	Note that $\lim_{\tau\to 0}\|\w\|_{\ga}=0,$  thus if we define $\bar{\theta}(r):= u_t+K(\x)^{-\frac{N-2s}{4s}}w^{r\tau_0}_{\ga,t},$ then $\bar{\theta}\in\Theta_t$ and
	$\lim_{r\to 0}\|\bar{\theta}(r)-u_t\|_{\ga}=0.$ Therefore by \eqref{S35},
	\be
	\kappa_t \leq \max_{r\in[0,1]}I_{K,t,f}^{\ga}\Big(\bar{\theta}(r)\Big) = \max_{r\in[0,1]}I_{K,t,f}^{\ga}\Big(u_t+K(\x)^{-\frac{N-2s}{4s}}w_{\ga,t}^{r\tau_0}\Big)<I_{K,t,f}^{\ga}(u_t)+I_{1,t,0}^{\ga}(W_{\ga,t}),\no
	\ee
that is,
\be\lab{4-6-1} I_{K,t,f}^{\ga}(u_t)<\kappa_t<I_{K,t,f}^{\ga}(u_t)+I_{1,t,0}^{\ga}(W_{\ga,t}) \quad \mbox{for all }t\geq 0.\ee
	Using Ekeland's variational principle, there exists a $(PS)$ sequence $(v_n^t)_n$ of $I_{K,t,f}^{\ga}$ at level $\kappa_t$ for all $t\geq 0$. Since any $(PS)$ for $I_{K,t,f}^{\ga}$ is bounded and $\kappa_t <I_{K,t,f}^{\ga}(u_t)+I_{1,t,0}^{\ga}(W_{\ga,t})$, using Theorem~\ref{th:PS}, in the case of $t>0$, there exists $v_t\in\Hs$ such that $v_n^t\to v_t$ in $\Hs$, with $I_{K,t,f}^{\ga}(v_t)  =\kappa_t$ and $(I_{K,t,f}^{\ga})'(v_t)=0$. Moreover, $I_{K,t,f}^{\ga}(u_t)<\kappa_t$
implies that $u_t\neq v_t$. Hence we have proved the proposition for $t>0$.
\vspace{2mm}

Next let us assume that $t=0$ so that we are in case $(ii)$ of Theorem~\ref{th:ex-f}
and so
\be\lab{4-6-3}
K(\x)=\|K\|_{L^\infty(\mathbb R^N)}<\bigg(\frac{S}{S_{\ga,0,s}}\bigg)^\frac{N}{N-2s}
\ee
holds by assumption.
Let $W$ denote the unique positive solution of \eqref{W}. As $W_{\ga,0}$ is a minimum energy positive solution (ground state solution) of \eqref{Q}, with $t=0$, it follows that
$$I_{1,0,0}^{\ga}(W_{\ga,0})\leq I_{1,0,0}^{\ga}(W)<I_{1,0,0}^{0}(W),$$
where the last inequality is due to the fact that $W>0$ and so $\int_{\Rn}\frac{|W|^2}{|x|^{2s}}{\rm d}x>0$.
 Since $S$ and $S_{\ga,0,s}$ are achieved by $W$ and $W_{\ga,0}$ respectively, it is easy to see that
$\|W\|^2_{\Hs}=S^\frac{N}{2s}$ and $\|W_{\ga,0}\|^2_{\ga}=S_{\ga,0,s}^\frac{N}{2s}$. On the other hand, as $I_{1,0,0}^{0}(W)=\frac{s}{N}\|W\|^2_{\dot{H}^s}$
and $I_{1,0,0}^{\ga}(W_{\ga,0})=\frac{s}{N}\|W_{\ga,0}\|^2_{\ga}$, we obtain
$\frac{I_{1,0,0}^{\ga}(W_{\ga,0})}{I_{1,0,0}^{0}(W)}=\big(\frac{S_{\ga,0,s}}{S}\big)^\frac{N}{2s}$.
This  together with \eqref{4-6-3} yields
$$I_{1,0,0}^{\ga}(W_{\ga,0})<K(\x)^{-\frac{N-2s}{2s}}I_{1,0,0}^{0}(W)\leq K(x)^{-\frac{N-2s}{2s}}I_{1,0,0}^{0}(W) \quad\mbox{for all }x\in\Rn.$$
Combining the above inequality with \eqref{4-6-1} yields
$$I_{K,0,f}^{\ga}(u_0)<\kappa_0<\min\bigg\{I_{K,0,f}^{\ga}(u_0)+K(x)^{-\frac{N-2s}{2s}}I_{1,0,0}^{0}(W), \quad I_{K,0,f}^{\ga}(u_0)+I_{1,0,0}^{\ga}(W_{\ga,0})\bigg\}.$$	
Hence, again using Theorem~\ref{th:PS} (as in the case $t>0$), we can conclude that the $(PS)$ sequence $(v_n^0)_n$ converges strongly to some $v_0\in \Hs$, with 	$I_{K,0,f}^{\ga}(v_0)  =\kappa_0$ and $(I_{K,0,f}^{\ga})'(v_0)=0$. As before, $I_{K,0,f}^{\ga}(u_0)<\kappa_0$ implies that $u_0\neq v_0$. Hence we have completed the proof for all $t\geq 0$.
\end{proof}

\begin{lemma}\lab{l:J1.3}
	If $\|f\|_{\hms}< C_t\sqrt{1-\frac{\ga}{\ga_{N,s}}}S_{\ga,t,s}^{\tfrac{N-t}{4s-2t}}$, then \eqref{S32} holds.
\end{lemma}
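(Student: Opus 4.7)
\begin{pfn}{Proof plan for Lemma \ref{l:J1.3}}
The plan is to reduce \eqref{S32} to a one-variable inequality in $x:=\|u\|_{\ga}$ by combining the Hardy-Sobolev inequality (which gives a lower bound on $x$) with the duality estimate together with the norm equivalence of Remark~\ref{r:norm} (which gives an upper bound on the pairing $\langle f,u\rangle$), and then to verify that the quantitative hypothesis on $\|f\|_{\hms}$ is exactly what is needed to make this one-variable inequality strictly positive on the relevant half-line.

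First, I would fix an arbitrary $u\in\Hs$ with $\|u\|_{L^{2^*_s(t)}(\Rn,|x|^{-t})}=1$. The Hardy-Sobolev inequality \eqref{S_gats} then yields
\[
\|u\|_{\ga}\,\geq\,S_{\ga,t,s}^{1/2}.
\]
On the other hand, the duality between $\Hs$ and its dual, combined with the estimate $\|u\|_{\dot{H}^s}\le(1-\ga/\ga_{N,s})^{-1/2}\|u\|_{\ga}$ from Remark~\ref{r:norm}, produces
\[
\prescript{}{\hms}{\langle}f,u{\rangle}_{\dot{H}^s}\,\leq\, \frac{\|f\|_{\hms}}{\sqrt{1-\ga/\ga_{N,s}}}\,\|u\|_{\ga}.
\]
Writing $\la:=\|f\|_{\hms}/\sqrt{1-\ga/\ga_{N,s}}$ and $\al:=(N+2s-2t)/(2s-t)$, it therefore suffices to show that
\[
g(x)\,:=\,C_t x^{\al}-\la x\,>\,c\,>\,0
\quad\mbox{for every }x\geq S_{\ga,t,s}^{1/2},
\]
for some constant $c$ independent of $x$.

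Next, I would factor $g(x)=x\bigl(C_t x^{\al-1}-\la\bigr)$ and note that $\al-1=(N-t)/(2s-t)>0$, so $g(x)>0$ if and only if $x>(\la/C_t)^{1/(\al-1)}=(\la/C_t)^{(2s-t)/(N-t)}$. The key step is then to translate the hypothesis $\|f\|_{\hms}<C_t\sqrt{1-\ga/\ga_{N,s}}\,S_{\ga,t,s}^{(N-t)/(4s-2t)}$ into
\[
\la\,<\,C_t\,S_{\ga,t,s}^{(N-t)/(4s-2t)}
\quad\Longleftrightarrow\quad
\Bigl(\tfrac{\la}{C_t}\Bigr)^{(2s-t)/(N-t)}\,<\,S_{\ga,t,s}^{1/2},
\]
using that $(N-t)/(4s-2t)=\tfrac{1}{2}\cdot(N-t)/(2s-t)$. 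Hence the threshold $(\la/C_t)^{1/(\al-1)}$ at which $g$ turns positive lies strictly below $S_{\ga,t,s}^{1/2}$, so $g$ is strictly positive on $[S_{\ga,t,s}^{1/2},\infty)$.

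Finally, since $g$ is continuous with $g(S_{\ga,t,s}^{1/2})>0$ and $g(x)\to\infty$ as $x\to\infty$ (because $\al>1$), the infimum of $g$ over $[S_{\ga,t,s}^{1/2},\infty)$ is attained and strictly positive; call it $c_0>0$. Combining the two displayed estimates, for every admissible $u$ we obtain
\[
C_t\|u\|_{\ga}^{\al}-\prescript{}{\hms}{\langle}f,u{\rangle}_{\dot{H}^s}\,\geq\,g(\|u\|_{\ga})\,\geq\,c_0\,>\,0,
\]
which proves \eqref{S32}. No step here is a substantial obstacle; the content of the lemma is the bookkeeping to see that the constant $C_t$ chosen in Theorem~\ref{th:ex-f} is exactly what makes the factor $(\la/C_t)^{1/(\al-1)}$ drop below $S_{\ga,t,s}^{1/2}$ under the stated smallness assumption on $f$.
\end{pfn}
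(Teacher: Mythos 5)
Your proof is correct and is indeed a cleaner, more self-contained route than the one taken in the paper. You work directly on the normalization sphere $\{\|u\|_{L^{2^*_s(t)}(\Rn,|x|^{-t})}=1\}$ that appears in \eqref{S32}, obtain the uniform lower bound $\|u\|_{\ga}\geq S_{\ga,t,s}^{1/2}$ straight from the Hardy-Sobolev inequality \eqref{S_gats}, and reduce the whole lemma to the elementary one-variable fact that $g(x)=C_t x^{\al}-\la x$ (with $\al=(N+2s-2t)/(2s-t)>1$) is strictly positive and increasing on $[S_{\ga,t,s}^{1/2},\infty)$ precisely when $\la<C_t S_{\ga,t,s}^{(N-t)/(4s-2t)}$, which under the norm equivalence of Remark~\ref{r:norm} is exactly the stated smallness assumption on $\|f\|_{\hms}$. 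The paper instead detours through the auxiliary set $\Sigma^t$: it first proves the bound \eqref{S311} over $\Sigma^t$ using Lemma~\ref{l31} and Remark~\ref{r:30-7-1}, and then shows via the chain of scaling equivalences \eqref{S312} that this is equivalent to \eqref{S32}. Both arguments rest on the same quantitative input (Hardy-Sobolev plus the dual pairing estimate), so the content is the same; what your version buys is that Lemma~\ref{l31}, Remark~\ref{r:30-7-1}, and the equivalences \eqref{S312} become superfluous for this particular lemma, at the cost of not tying the statement as explicitly into the $\Sigma^t$ framework used elsewhere in Section~3. One tiny remark: you invoke $g(x)\to\infty$ to say the infimum is ``attained''; this is unnecessary, since $g'(x)=\al C_t x^{\al-1}-\la>0$ on the whole interval once $C_t S_{\ga,t,s}^{(\al-1)/2}>\la$, so $g$ is increasing there and the infimum is simply $g(S_{\ga,t,s}^{1/2})>0$.
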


\begin{proof}
	By the given assumption, there exists $\varepsilon>0$ such that
	\be
	\|f\|_{\hms}< C_t\sqrt{1-\frac{\ga}{\ga_{N,s}}}S_{\ga,t,s}^{\tfrac{N-t}{4s-2t}}-\varepsilon.\no
	\ee
	Combining this with Lemma~\ref{l31}, for all $u^t\in \Si^t$, it holds
\begin{align*}
	\prescript{}{\hms}{\langle}f,u^t{\rangle}_{\dot{H}^s} \leq \|f\|_{\hms}\|u^t\|_{\Hs}
	&\leq \bigg(1-\frac{\ga}{\ga_{N,s}}\bigg)^{-\frac{1}{2}} \|f\|_{\hms}\|u^t\|_{\ga}\\
	&<C_t S_{\ga,t,s}^{\frac{N-t}{4s-2t}}\|u^t\|_{\ga}-\varepsilon\bigg(1-\frac{\ga}{\ga_{N,s}}\bigg)^{-\frac{1}{2}}\|u^t\|_{\ga}\\
	&\leq \frac{4s-2t}{N-2t+2s}\|u^t\|_{\ga}^2-\varepsilon\bigg(1-\frac{\ga}{\ga_{N,s}}\bigg)^{-\frac{1}{2}}\|u^t\|_{\ga}.
	\end{align*}
	Hence,
$$	\inf_{u\in\Sigma^t}\[\frac{4s-2t}{N-2t+2s}\|u\|_{\ga}^2-\prescript{}{\hms}{\langle}f,u{\rangle}_{\dot{H}^s}\]\geq \varepsilon \bigg(1-\frac{\ga}{\ga_{N,s}}\bigg)^{-\frac{1}{2}}\inf_{u\in\Sigma^t}\|u\|_{\ga}.
$$
Since  $\|u\|_{\ga}$ is bounded away from $0$ on $\Sigma^t$
by Remark~\ref{r:30-7-1}, the above expression implies that
\be\label{S311}
\inf_{u\in\Sigma^t}\[\frac{4s-2t}{N-2t+2s}\|u\|_{\ga}^2
-\prescript{}{\hms}{\langle}f,u{\rangle}_{\dot{H}^s}\]>0.
\ee
On the other hand,
\begin{align}\label{S312}
\eqref{S32} &\iff C_t\frac{\|u\|_{\ga}^{\frac{N-2t+2s}{2s-t}}}
{\bigg(\displaystyle\int_{\Rn}\frac{|u|^{2_s^*(t)}}{|x|^t}\;{\rm d}x\bigg)^{\frac{N-2s}{4s-2t}}}
-\prescript{}{\hms}{\langle}f,u{\rangle}_{\dot{H}^s}>0\quad\mbox{for } \|u\|_{L^{2_s^*(t)}(\Rn, |x|^{-t})}=1\no\\
	&\iff C_t\frac{\|u\|_{\ga}^{\frac{N-2t+2s}{2s-t}}}{\bigg(\displaystyle\int_{\Rn}\frac{|u|^{2_s^*(t)}}{|x|^t}\;{\rm d}x\bigg)^{\frac{N-2s}{4s-2t}}}-\prescript{}{\hms}{\langle}f,u{\rangle}_{\dot{H}^s}>0
\quad\mbox{for }u\in\Sigma^t\\
	&\iff \frac{4s-2t}{N-2t+2s}\|u\|_{\ga}^2-\prescript{}{\hms}{\langle}f,u{\rangle}_{\dot{H}^s}>0
\quad\mbox{for }u\in\Sigma^t.\no
\end{align}
Clearly, \eqref{S311} ensures the RHS of \eqref{S312} holds. The lemma now follows.
\end{proof}
\vspace{2mm}

\noindent{\it Proof of Theorem~$\ref{th:ex-f}$ completed.}
Combining Propositions~\ref{p:30-7-1} and~\ref{p:2-6-1} with Lemma~\ref{l:J1.3}, we conclude the proof of Theorem~\ref{th:ex-f}.
\qed

\appendix
\section{}\label{app}
\setcounter{equation}{0}

\begin{lemma}\label{L1}
Let $(v_n)_n\subseteq \Hs$ be a $(PS)$ sequence for $\bar I_{K,0,0}^\ga$  at the level $d$. Assume that, there exist sequences $(y_n)_n\to y\in\Rn$, $r_n\to 0\in \R^+\cup\{0\}$ such that $w_n(x)= r_n^{\frac{N-2s}{2}}v_n(r_nx+y_n)$ converges weakly in $\Hs$ and a.e. to some $w\in\Hs$. If $\frac{|y_n|}{r_n}\to \infty$, then $K(y)^{\frac{N-2s}{4s}}w$ solves \eqref{W}.
Moreover, $$z_n:= v_n - r_n^{-\frac{N-2s}{2}}w(\frac{x-y_n}{r_n})$$ is a $(PS)$ sequence for $\bar I_{K,0,0}^\ga$ at the level $d- K(y)^{-\frac{N-2s}{2s}}\bar{I}_{1,0,0}^{0}(K(y)^{\frac{N-2s}{4s}}w)$.
\end{lemma}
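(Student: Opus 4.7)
\begin{pf}
The plan is to pass to the limit in the rescaled Palais-Smale identity to obtain the equation satisfied by $w$, and then to apply a Br\'ezis-Lieb type splitting to identify the level of the residual sequence $(z_n)_n$. The recurring analytic theme is that the translation $|y_n|/r_n\to\infty$ pushes the rescaled mass \emph{away} from the origin, so all Hardy-type integrals involving the rescaled profile are asymptotically negligible.

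\textbf{Step 1 (the limit equation).} Fix $\phi\in C_c^\infty(\Rn)$ and test the identity $\prescript{}{\hms}{\langle}\dIo(v_n),\psi_n\rangle_{\dot H^s}=o(1)$ against
$\psi_n(x):=r_n^{-\frac{N-2s}{2}}\phi\bigl(\frac{x-y_n}{r_n}\bigr)$, which satisfies $\|\psi_n\|_{\dot H^s}=\|\phi\|_{\dot H^s}$. The change of variables $x=r_n u+y_n$ converts the Gagliardo form into $\langle w_n,\phi\rangle_{\dot H^s}\to\langle w,\phi\rangle_{\dot H^s}$, and the nonlinear integral becomes $\int K(r_n u+y_n)|w_n|^{2_s^*-2}w_n\,\phi\,du$. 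Since $r_n u+y_n\to y$ uniformly on $\supp\phi$ and $K$ is continuous, this limit equals $K(y)\int|w|^{2_s^*-2}w\,\phi\,du$ (uniform integrability follows as in the proof of Claim~1 in Step~2 of Theorem~\ref{th:PS}). The Hardy contribution becomes $\int w_n(u)\phi(u)\,|u+y_n/r_n|^{-2s}\,du$, which vanishes: given $\varepsilon>0$, split the domain into $\{|u|\le A\}$ (where $|u+y_n/r_n|\to\infty$ so the integrand tends to $0$ uniformly on $\supp\phi$) and $\{|u|>A\}$ (handled by shifting to $v=u+y_n/r_n$ and using the Hardy inequality together with a density argument that reduces to $w$ of small $\dot H^s$-norm). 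Passing to the limit yields $(-\Delta)^s w=K(y)|w|^{2_s^*-2}w$, and the scaling $W:=K(y)^{\frac{N-2s}{4s}}w$ then solves \eqref{W}.

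\textbf{Step 2 (residual sequence $(z_n)_n$ is PS).} Set $\tilde w_n(x):=r_n^{-\frac{N-2s}{2}}w\bigl(\frac{x-y_n}{r_n}\bigr)$, so $z_n=v_n-\tilde w_n$ and $\|\tilde w_n\|_{\dot H^s}=\|w\|_{\dot H^s}$. Three facts will be established, each via the same rescaling + Hardy-decay mechanism used in Step~1:
\begin{enumerate}
\item[(a)] $\int|\tilde w_n|^2|x|^{-2s}dx=o(1)$ and $\int v_n\tilde w_n|x|^{-2s}dx=o(1)$, so $\|\tilde w_n\|_\gamma^2=\|w\|_{\dot H^s}^2+o(1)$ and $\langle v_n,\tilde w_n\rangle_\gamma=\langle w_n,w\rangle_{\dot H^s}+o(1)\to\|w\|_{\dot H^s}^2$.
\item[(b)] The Br\'ezis-Lieb lemma gives $\int K|v_n|^{2_s^*}=\int K|z_n|^{2_s^*}+\int K|\tilde w_n|^{2_s^*}+o(1)$, and a change of variables with dominated convergence yields $\int K|\tilde w_n|^{2_s^*}\to K(y)\int|w|^{2_s^*}$.
\item[(c)] For any $h\in \dot H^s$, the analogous splitting for the derivative gives
\[
\int K|v_n|^{2_s^*-2}v_n h-\int K|z_n|^{2_s^*-2}z_n h-\int K|\tilde w_n|^{2_s^*-2}\tilde w_n h=o(\|h\|_{\dot H^s}),
\]
uniformly in $h$ (the argument is a direct adaptation of Claim~1 in Step~2 of Theorem~\ref{th:PS}).
\end{enumerate}
Combining (a) and (b) gives
$\IKto(z_n)=\IKto(v_n)-\tfrac12\|w\|_{\dot H^s}^2+\tfrac{K(y)}{2_s^*}\int|w|^{2_s^*}+o(1)=d-K(y)^{-\frac{N-2s}{2s}}\bar I_{1,0,0}^{0}(W)+o(1)$, the identity $K(y)^{-\frac{N-2s}{2s}}\bar I_{1,0,0}^{0}(W)=\tfrac12\|w\|_{\dot H^s}^2-\tfrac{K(y)}{2_s^*}\int|w|^{2_s^*}$ being a direct computation using $W=K(y)^{\frac{N-2s}{4s}}w$ together with $2_s^*\cdot\frac{N-2s}{4s}=\frac{N}{2s}$.

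\textbf{Step 3 (derivative vanishes).} Using (c) and $\prescript{}{\hms}{\langle}\dIo(v_n),h\rangle_{\dot H^s}\to 0$, it remains to show
\[
\Lambda_n(h):=-\langle\tilde w_n,h\rangle_\gamma+\int K|\tilde w_n|^{2_s^*-2}\tilde w_n h\,dx=o(\|h\|_{\dot H^s}).
\]
Rescale by $h_n(u):=r_n^{\frac{N-2s}{2}}h(r_n u+y_n)$, which satisfies $\|h_n\|_{\dot H^s}=\|h\|_{\dot H^s}$; the Hardy term of $\langle\tilde w_n,h\rangle_\gamma$ is $o(\|h\|)$ by the same tail estimate as in Step~1 applied to the product $w\cdot h_n$, and the remaining terms read
\[
\Lambda_n(h)=-\langle w,h_n\rangle_{\dot H^s}+\int K(r_n u+y_n)|w|^{2_s^*-2}w\,h_n\,du+o(\|h\|_{\dot H^s}).
\]
Testing the equation $(-\Delta)^s w=K(y)|w|^{2_s^*-2}w$ against $h_n$ replaces the first term by $-K(y)\int|w|^{2_s^*-2}w\,h_n$, so $\Lambda_n(h)=\int[K(r_n u+y_n)-K(y)]|w|^{2_s^*-2}w\,h_n\,du+o(\|h\|)$, which is $o(\|h\|_{\dot H^s})$ by H\"older's inequality and dominated convergence since $K\in L^\infty$ is continuous at $y$ and $|w|^{2_s^*-2}w\in L^{(2_s^*)'}$.

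\textbf{Main obstacle.} The delicate point is the Hardy integral: rescaled profiles centred at $y_n$ with $|y_n|/r_n\to\infty$ must be shown to carry no Hardy mass in the limit. This fails from Hardy's inequality alone (which only gives boundedness) and requires the density-based splitting of $w$ into a compactly supported piece (where decay is pointwise) and a small-norm piece (where Hardy yields an $\varepsilon$-bound); this is the mechanism that keeps the Hardy term inert throughout Steps 1-3.
\end{pf}
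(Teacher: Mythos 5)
Your proof is correct and follows essentially the same route as the paper: rescale the test function by $\phi_n(x)=r_n^{-\frac{N-2s}{2}}\phi(\frac{x-y_n}{r_n})$ to pass to the limit equation, use that $|y_n|/r_n\to\infty$ to kill the Hardy contribution, and then apply Br\'ezis--Lieb splitting to identify the level and derivative of $z_n$. You are somewhat more explicit than the paper about why $\int|w|^2|u+y_n/r_n|^{-2s}\,du\to 0$ (the compactly supported piece plus small-$\dot H^s$-norm tail split), which the paper asserts tersely and handles by reference to Claim~2 of Step~5 in Theorem~\ref{th:PS}; note only that in your Step~1 the density reduction is superfluous since $\phi$ is compactly supported, so the pointwise uniform decay of $|u+y_n/r_n|^{-2s}$ on $\operatorname{supp}\phi$ already suffices there.
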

	
\begin{proof} Let $(v_n)_n\subseteq \Hs$ be a $(PS)$ sequence for $\bar I_{K,0,0}^\ga$  at the level $d$ and $\phi$ be an arbitrary $C^\infty_c(\Rn)$ function.
Put
$\phi_n(x):=r_n^{-\frac{N-2s}{2}}\phi(\tfrac{x-y_n}{r_n})$. Thus,
\begin{align}\lab{1-6-1}
\langle w,\phi\rangle_{\dot{H}^s} &= \lim_{n\to\infty}\langle w_n,\phi\rangle_{\dot{H}^s}\no\\
&=\lim_{n\to\infty}\frac{C_{N,s}}{2}\iint_{\R^{2N}}\frac{(w_n(x)-w_n(y))(\phi(x)-\phi(y))}{|x-y|^{N+2s}}\;{\rm d}x{\rm d}y\no\\
&=\lim_{n\to\infty}\frac{C_{N,s}}{2}\iint_{\R^{2N}}\frac{(v_n(x)-v_n(y))(\phi_n(x)-\phi_n(y))}{|x-y|^{N+2s}}\;{\rm d}x{\rm d}y\\
		&=\lim_{n\to\infty}\ga\int_{\Rn}\frac{v_n\phi_n}{|x|^{2s}}\;{\rm d}x+\int_{\Rn}K(x)|v_n|^{2_s^*-2}v_n\phi_n\;{\rm d}x\no\\
		&=\lim_{n\to\infty}\bigg[\ga \int_{\Rn}\frac{w_n\phi}{|x+r_n^{-1}y_n|^{2s}}\;{\rm d}x +\int_{\Rn} K(r_nx+y_n)|w_n|^{2_s^*-2}w_n\phi\; {\rm d}x \bigg].\no
\end{align}
Since $r_n^{-1}|y_n|\to\infty$, for each fixed $\phi$ we have
$$\lim_{n\to\infty}\int_{\Rn}\frac{w_n\phi}{|x+r_n^{-1}y_n|^{2s}}\;{\rm d}x =0.$$
Therefore, taking the limit as $n\to\infty$ in \eqref{1-6-1}, we obtain	$(-\De)^sw=K(y)|w|^{2_s^*-2}w$, or equivalently $K(y)^{\frac{N-2s}{4s}}w$ solves \eqref{W}.
Moreover,
$$\int_{\Rn}\frac{w_nw}{|x-r_n^{-1}y_n|^{2s}}{\rm d}x=\int_{\Rn}\frac{|w|^2}{|x-r_n^{-1}y_n|^{2s}}{\rm d}x=o(1).$$
Therefore, proceeding as  in Claim~2 of Step~5 in the proof of Theorem~\ref{th:PS}, we obtain as $n\to\infty$
$$\bar I_{K,0,0}^\ga(z_n)=\bar I_{K,0,0}^\ga(v_n)-K(y)^{-\frac{N-2s}{2s}}\bar I_{1,0,0}^{0}(K(y)^{\frac{N-2s}{4s}}w)+o(1).$$
To prove that $\prescript{}{\hms}{\big\langle}\bar I_{K,0,0}^\ga(z_n), \va{\big\rangle}_{\dot H^s}=o(\|\va\|)$, we proceed as in the proof of
Claim~2 of Step~5 in Theorem~\ref{th:PS}, the only additional estimate we need to check is
$$\int_{\Rn}\frac{w\va_n}{|x-r_n^{-1}y_n|^{2s}}{\rm d}x=o(\|\va_n\|),$$ 	
where $\va_n=r_n^\frac{N-2s}{2s}\va(r_nx+y_n)$, $\|\va_n\|=\|\va\|$. This estimate follows from the Cauchy-Schwartz and the H\"{o}lder inequalities.
\end{proof}

\begin{lemma}\label{L2}
	Let $\mathcal{K}$ denote the Kelvin transform in $\Rn.$ If $(r_n)_n\subset \R^+\cup\{0\}$ and $(y_n)_n\subset \Rn$ are sequences such that $\tfrac{|y_n|}{r_n}\to\infty$
and $W$ is a positive solution of~\eqref{W}, then in the sense of $\Hs$-norm as $n\to\infty$
\be\label{PSL1}
r_n^{-\frac{N-2s}{2}}\mathcal{K}\bigg(W(\tfrac{x-y_n}{r_n})\bigg) = \big(r_n|y_n|^{-2}\big)^{-\frac{N-2s}{2}}
W\bigg(\frac{x-\tfrac{y_n}{|y_n|^2}}{r_n|y_n|^{-2}}\bigg)+o(1).
\ee
\end{lemma}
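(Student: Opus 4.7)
The plan is to show that the Kelvin transform of an exact bubble is itself an exact bubble with explicit parameters, and that these parameters are, to leading order in $r_n/|y_n|$, those appearing on the right-hand side of \eqref{PSL1}; the conclusion is then a continuity statement in the parameter space. Since positive solutions of \eqref{W} are unique up to translation and dilation, I take $W(x) = c_0(1+|x|^2)^{-(N-2s)/2}$ and introduce the bubble family $W_{r,y}(x) := r^{-(N-2s)/2}W((x-y)/r)$. Starting from the identity
$$\bigg|\frac{x}{|x|^2} - y\bigg|^2 = \frac{1 - 2x\cdot y + |x|^2|y|^2}{|x|^2}$$
together with the algebraic factorization $r^2|x|^2 + |x|^2|y|^2 - 2x\cdot y + 1 = s\,|x - y/s|^2 + r^2/s$, where $s := r^2 + |y|^2$, a direct computation yields
$$\mathcal{K}(W_{r,y}) = W_{\tilde r, \tilde y}, \qquad \tilde r := \frac{r}{r^2 + |y|^2}, \quad \tilde y := \frac{y}{r^2 + |y|^2}.$$
Thus the left-hand side of \eqref{PSL1} is exactly $W_{\tilde r_n, \tilde y_n}$, and the target reduces to $\|W_{\tilde r_n, \tilde y_n} - W_{\bar r_n, \bar y_n}\|_{\Hs} \to 0$, with $\bar r_n := r_n/|y_n|^2$ and $\bar y_n := y_n/|y_n|^2$.

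Next I compare the parameters. A direct calculation using $|y_n|/r_n \to \infty$ gives
$$\lambda_n := \frac{\bar r_n}{\tilde r_n} = 1 + \bigg(\frac{r_n}{|y_n|}\bigg)^2 \longrightarrow 1, \qquad \mu_n := \frac{\bar y_n - \tilde y_n}{\tilde r_n} = \frac{r_n}{|y_n|}\cdot\frac{y_n}{|y_n|},$$
so $|\mu_n| = r_n/|y_n| \to 0$. The change of variable $z = (x - \bar y_n)/\bar r_n$, combined with the multiplicative factor $\bar r_n^{-(N-2s)/2}$, is an $\Hs$-isometry; applying it reduces the claim to
$$\big\|\lambda_n^{(N-2s)/2}W(\lambda_n\,\cdot\, + \mu_n) - W\big\|_{\Hs} \longrightarrow 0.$$

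This last display follows from the joint strong continuity on $\Hs$ of the actions of translations and dilations at the identity $(\lambda, \mu) = (1, 0)$. Translations are handled via the Fourier representation $\|u\|_{\Hs}^2 = \int|\widehat u(\xi)|^2|\xi|^{2s}\,d\xi$ and dominated convergence applied to $|e^{i\mu_n\cdot\xi} - 1|^2|\widehat W(\xi)|^2|\xi|^{2s}$; dilations are treated by a standard density argument, approximating $W$ in $\Hs$ by functions in $C_c^\infty(\Rn)$, for which $\lambda_n \to 1$ is immediate. The main obstacle I anticipate is the algebraic identification that $\mathcal{K}(W_{r,y})$ is an \emph{exact} bubble with parameters $\tilde r, \tilde y$: this is expected from the conformal covariance of \eqref{W}, but isolating the correct factorization $s = r^2 + |y|^2$ is delicate. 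Once that is in place, the remaining steps are routine, and the $o(1)$ error in \eqref{PSL1} arises purely from the small discrepancies $\lambda_n - 1 = O((r_n/|y_n|)^2)$ and $|\mu_n| = r_n/|y_n|$.
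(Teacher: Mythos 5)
Your proof is correct and follows the same basic strategy as the paper's, namely exploiting the explicit form $W(x)=C_{N,s}(1+|x|^2)^{-(N-2s)/2}$ from \cite{CLO} and applying the $\Hs$-isometry $v \mapsto (r_n|y_n|^{-2})^{(N-2s)/2}v((r_n|y_n|^{-2})x + y_n|y_n|^{-2})$ to both sides of \eqref{PSL1}, reducing the claim to the convergence in $\Hs$ of a one-parameter perturbation of $W$ as $r_n/|y_n|\to 0$.

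What you add, and what the paper does not make explicit, is the structural observation that the Kelvin transform of an exact bubble $W_{r,y}$ is itself an exact bubble $W_{\tilde r,\tilde y}$ with $\tilde r = r/(r^2+|y|^2)$ and $\tilde y = y/(r^2+|y|^2)$; your factorization $r^2|x|^2+|x|^2|y|^2-2x\cdot y+1 = s|x-y/s|^2 + r^2/s$ with $s=r^2+|y|^2$ is correct and makes the conformal covariance of the bubble family transparent. The paper instead applies the rescaling directly to the Kelvin-transformed bubble and produces the function it calls $W^n$; your computation confirms that, after the isometry, the left-hand side is $\lambda_n^{(N-2s)/2}W(\lambda_n\cdot+\mu_n)$ with $\lambda_n = 1+(r_n/|y_n|)^2$ and $\mu_n = (r_n/|y_n|)(y_n/|y_n|)$, which when expanded gives $C_{N,s}\big(1 + (1+r_n^2|y_n|^{-2})|x|^2 + 2(r_n/|y_n|^2)\langle x,y_n\rangle\big)^{-(N-2s)/2}$. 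Note that this exposes a typo in the paper's displayed expression for $W^n$, where the cross term is written as $\frac{r_n}{|y_n|}\langle x,y_n\rangle$ rather than $\frac{2r_n}{|y_n|^2}\langle x,y_n\rangle = \frac{2r_n}{|y_n|}\langle x,\frac{y_n}{|y_n|}\rangle$; your version is the correct one and, of course, still tends to $W$ in $\Hs$. Your closing continuity argument (Fourier/dominated convergence for translations, density for dilations) is standard and fills in the justification that the paper leaves implicit.
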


\begin{proof} Let the assumptions and notation of ghe statement hold.
Let $W$ be a positive solution of~\eqref{W}. Then   $W(x)=C_{N,s}(1+|x|^2)^{-\frac{N-2s}{2}}$ thanks to \cite{CLO}.
The $\Hs$ norm is invariant under the scaling so that
\be\label{PSL2}
v\mapsto \tilde{v}(x):= \bigg(\frac{r_n}{|y_n|^{2}}\bigg)^{\frac{N-2s}{2}}v\bigg(\frac{r_n}{|y_n|^{2}}x+\frac{y_n}{|y_n|^2}\bigg),
\ee
we can apply it to each side of $\eqref{PSL1}$ to check the convergence. The RHS of $\eqref{PSL1}$ becomes $W+o(1)$. The LHS of $\eqref{PSL1}$, after some algebraic computation, is transformed into
	$$W^n(x):=C_{N,s}\bigg(1+\frac{r_n}{|y_n|}\langle x,y_n\rangle+\big(1+r_n^2|y_n|^{-2}\big)|x|^2)\bigg)^{-\frac{N-2s}{2}}.$$
	As $\frac{r_n}{|y_n|}\to 0$, clearly $W^n\to W$ in $\Hs$. Hence the proof is complete.
\end{proof}	

{\bf Acknowledgement}:  The research of M.~Bhakta is partially supported by the {\em SERB MATRICS grant (MTR/2017/000168) and WEA grant (WEA/2020/000005)}. S.~Chakraborty is partially supported by {\em NBHM grant 0203/11/2017/RD-II.}

P. Pucci is member of the {\em Gruppo Nazionale per l'Analisi Ma\-te\-ma\-ti\-ca, la Probabilit\`a e le loro Applicazioni}
(GNAMPA) of the {\em Istituto Nazionale di Alta Matematica} (INdAM)
and is partly supported by the   INdAM -- GNAMPA Project
{\em Equazioni alle derivate parziali: problemi e mo\-del\-li} (Prot\_U-UFMBAZ-2020-000761).
P. Pucci was also partly supported by of the {\em Fondo Ricerca di Base di Ateneo-Eser\-ci\-zio 2017--2019} of the University of Perugia, named {\em PDEs and Nonlinear Analysis}.

\medskip


\begin{thebibliography}{99}
\bibitem{AADP}{\sc Abdellaoui, B.; Attar, A.; Dieb, A.; Peral, I.}, Attainability of the fractional Hardy constant with nonlocal mixed boundary conditions: applications, Discrete Contin. Dyn. Syst. 38(12) (2018) 5963--5991.

\bibitem{AMPP}{\sc  Abdellaoui, B.; Medina, M.; Peral, I.; Primo, A.}, {\em The effect of the {H}ardy potential in some {C}alder\'{o}n-{Z}ygmund properties for the fractional {L}aplacian}, {J. Differential Equations},
{260} 
(2016), {8160--8206}.

\bibitem{AM}{\sc  Adimurthi; Mallick, A.} {\em A Hardy type inequality on fractional order Sobolev spaces on the Heisenberg group}. {Ann. Sc. Norm. Super. Pisa Cl. Sci. (5)} 18 (2018), no. 3, 917--949. 

\bibitem{AT}{\sc  Alarc\'{o}n, S.; Tan, J.}, {\em Sign-changing solutions for some nonhomogeneous nonlocal critical elliptic problems}, Discrete Contin. Dyn. Syst. 39 (2019), no. 10, 5825--5846.

\bibitem{BBGM}{\sc Bhakta, M; Biswas, A.; Ganguly, D.; Montoro, L.}, {\em Integral representation of solutions using Green function for fractional Hardy equations}, J. Differential Equations 269 (2020), no. 7, 5573--5594.

\bibitem{BCG}{\sc  Bhakta, M.; Chakraborty, S.; Ganguly, D.}, {\em Existence and Multiplicity of positive solutions of certain nonlocal scalar field equations}, preprint, arXiv: 1910:07919.

\bibitem{BP}{\sc Bhakta, M.; Pucci, P.}, {\em On multiplicity of positive solutions for nonlocal equations with critical nonlinearity}, Nonlinear Anal. 197 (2020), 111853, 22 pp.

\bibitem{BS}{\sc Bhakta, M.; Sandeep, K.}, {\em Hardy-Sobolev-Maz'ya type equations in bounded domains}, J. Differential Equations 247 (2009), no. 1, 119--139.

\bibitem{GGJP} {\sc Bogdan, K.; Grzywny, T.; Jakubowski, T.; Pilarczyk, D.},  {\em Fractional Laplacian with Hardy Potential},  Comm. Partial Differential Equations  44 
 (2019), 20--50.


\bibitem{CLO}{\sc Chen, W.; Li, C.; Ou, B.}, {\em Classification of solutions for an integral equation,}
{ Comm. Pure Appl. Math.} 59 (2006), no. 3, 330--343.


\bibitem{DPQ}{\sc  Del Pezzo, L.M.; Quaas, A.}, {\em A Hopf's lemma and a strong minimum principle for the fractional $p$-Laplacian}, J. Differential Equations 263 (2017), no. 1, 765--778.

\bibitem{DMPS}{\sc Dipierro, S.; Montoro, L.;  Peral, I.;  Sciunzi, B.}, {\em Qualitative properties of positive solutions to nonlocal critical problems involving the Hardy-Leray potential}, {Calc. Var. Partial Differential Equations} 55 (2016),
    Art. 99, 29 pp.


\bibitem{FP}{\sc Felli, V.; Pistoia, A.}, {\em Existence of Blowing-up Solutions for a Nonlinear Elliptic Equation with Hardy Potential and Critical Growth},
Comm. Partial Differential Equations 31, (2006), no. 1--3,  21--56.

\bibitem{FLS} {\sc Frank, R. L.; Lieb, E. H.; Seiringer, R.}, {\em Hardy-Lieb-Thirring inequalities for fractional Schr\"odinger operators}, {J. Amer. Math. Soc.} 21 (2008),
    925--950.

\bibitem{FS} {\sc Frank, R. L.;  Seiringer, R.}, {\em Non-linear ground
state representations and sharp Hardy inequalities},
J. Funct. Anal. 255 (2008), 3407--3430.

\bibitem{GRSZ}{\sc Ghoussoub, N.; Robert, F.; Shakerian, S.; Zhao, M.}, {\em Mass and asymptotics associated to fractional Hardy-Schr\"{o}dinger operators in critical regimes}, Comm. Partial Differential Equations 43 (2018), no. 6, 859--892.

\bibitem{GS}{\sc Ghoussoub, N.; Shakerian, S}, {\em Borderline variational problems involving fractional Laplacians and
critical singularities}, Adv. Nonlinear Stud. 15 (2015), no. 3, 527--555.

\bibitem{Li}{\sc Lions, P.-L. }, {\em The concentration-compactness principle in the calculus of variations. The limit
case}, Rev. Mat. Iberoamericana 1 (1985), 45--121.

\bibitem{Ma} {\sc  Mallick, A.}, Extremals for fractional order Hardy-Sobolev-Maz'ya inequality, {\em Calc. Var. Partial Differential Equations} 58 (2019), no. 2, no. 45, 37 pp.


\bibitem{PS}{\sc  Palatucci, G.; Pisante, A.}, {\em Improved Sobolev embeddings, profile decomposition, and concentration-compactness for fractional Sobolev spaces}, Calc. Var. Partial Differential Equations 50 (2014), no. 3-4, 799--829.

\bibitem{PS-2}{\sc  Palatucci, G.; Pisante, A.}, {\em A global compactness type result for Palais-Smale sequences in fractional Sobolev spaces}, {Nonlinear Anal.} 117 (2015), 1--7.

\bibitem{RS}{\sc Ros-Oton, X.;  Serra, J.}, {\em The Dirichlet problem for the fractional Laplacian: regularity up to the boundary}, J. Math. Pures Appl, 
    101 (2014), 275--302.

\bibitem{SZY}{\sc  Shang, X.; Zhang, J.; Yang, Y.}, {\em Positive solutions of nonhomogeneous fractional Laplacian problem with critical exponent,} {Commun. Pure Appl. Anal.} 13 (2014), no. 2, 567--584.

\bibitem{Sm}{\sc Smets, D.}, {\em Nonlinear Schr\"odinger equations with Hardy potential and critical nonlinearities}, {Trans. Amer. Math. Soc.} 357 (2005), no. 7, 2909--2938.

\bibitem{St}{\sc Struwe, M.},  Variational methods. Applications to nonlinear partial differential equations and Hamiltonian systems, Fourth edition, Ergebnisse der Mathematik und ihrer Grenzgebiete, 3. Folge. A Series of Modern Surveys in Mathematics 34, Springer-Verlag, Berlin, 2008, xx+302 pp.


\bibitem{TF}{\sc Tintarev, K., Fieseler, K.-H.} {\em Concentration Compactness. Functional-Analytic Grounds and Applications}.
Imperial College Press, London 2007, xii+264 pp.

\bibitem{WZ}{\sc Wang, F.; Zhang, Y.}, {\em Existence of multiple positive solutions for nonhomogeneous fractional Laplace problems with critical growth}, Bound. Value Probl. 2019, Paper no. 169, 21 pp.
\end{thebibliography}
\end{document}